\newtheorem{prop}{Proposition}[section]
\newtheorem{cor}[prop]{Corollary}
\newtheorem{thm}[prop]{Theorem}
\newtheorem{lemma}[prop]{Lemma}
\newtheorem*{recap}{Theorem \ref{k+2thm}}
\theoremstyle{remark}
\newtheorem{NB}{Note}
\newcommand{\sh}{\operatorname{sh}}
\title{Pattern avoidance and RSK-like algorithms for alternating permutations and Young tableaux\footnote{{This is the extended version of \cite{JCTAversion}.  In the published version, the order of Sections \ref{subsec:1stbij} and \ref{subsec:2ndbij} is reversed, and the proofs in the present Section \ref{subsec:1stbij} are omitted.  Otherwise, the papers are essentially the same.}}
}
\author{Joel Brewster Lewis \\
Massachusetts Institute of Technology \\
Cambridge, MA 02139 \\
\texttt{jblewis@math.mit.edu}}
\begin{document}

\maketitle

\begin{abstract}
We define a class $\mathcal{L}_{n, k}$ of permutations that generalizes alternating (up-down) permutations and give bijective proofs of certain pattern-avoidance results for this class.  
As a special case of our results, we give two bijections between the set $A_{2n}(1234)$ of alternating permutations of length $2n$ with no four-term increasing subsequence and standard Young tableaux of shape $\langle 3^n\rangle$, and between the set $A_{2n + 1}(1234)$ and standard Young tableaux of shape $\langle 3^{n - 1}, 2, 1\rangle$.  This represents the first enumeration of alternating permutations avoiding a pattern of length four.  We also extend previous work on doubly-alternating permutations (alternating permutations whose inverses are alternating) to our more general context.

The set $\mathcal{L}_{n, k}$ may be viewed as the set of reading words of the standard Young tableaux of a certain skew shape.  In the last section of the paper, we expand our study to consider pattern avoidance in the reading words of standard Young tableaux of any skew shape.  We show bijectively that the number of standard Young tableaux of shape $\lambda/\mu$ whose reading words avoid $213$ is a natural $\mu$-analogue of the Catalan numbers (and in particular does not depend on $\lambda$, up to a simple technical condition), and that there are similar results for the patterns $132$, $231$ and $312$.
\end{abstract}

\section{Introduction}\label{sec:intro}

A classical problem asks for the number of permutations that avoid a certain permutation pattern.  This problem has received a great deal of attention (see e.g., \cite{simion, 1342, 1234}) and has led to a number of interesting variations including the enumeration of special classes of pattern-avoiding permutations (e.g., involutions \cite{simion} and derangements \cite{der}).  One such variation, first studied by Mansour in \cite{mans}, is the enumeration of \emph{alternating} permutations avoiding a given pattern or collection of patterns.  Alternating permutations have the intriguing property \cite{mans, catadd, geehoon} that for any pattern of length three, the number of alternating permutations of a given length avoiding that pattern is given by a Catalan number.  This property is doubly interesting because it is shared by the class of all permutations.  This coincidence suggests that pattern avoidance in alternating permutations and in usual permutations may be closely related and so motivates the study of pattern avoidance in alternating permutations.

In this paper, we extend the study of pattern avoidance in alternating permutations to consider patterns of length four.  In particular, we show that the number of alternating permutations of length $2n$ avoiding the pattern $1234$ is $\frac{2 \cdot (3n)!}{n! (n + 1)! (n + 2)!}$ and that the number of alternating permutations of length $2n + 1$ avoiding $1234$ is $\frac{16 (3n)!}{(n - 1)!(n + 1)!(n + 3)!}$.  This is the first enumeration of a set of pattern-avoiding alternating permutations for a single pattern of length four.  These enumerations follow from bijections between permutations and standard Young tableaux of certain shapes.   

Our bijections work in a more general setting in which we replace alternating permutations with the set $\mathcal{L}_{n, k}$ of reading words of standard Young tableau of certain nice skew shapes.  
We also give proofs of two results that suggest that $\mathcal{L}_{n, k}$ is a ``good'' generalization of $S_n$ and $A_n$ for purposes of pattern avoidance.  The first is a natural extension to $\mathcal{L}_{n, k}$ of the work of Ouchterlony \cite{ouch} on pattern avoidance for alternating permutations whose inverses are also alternating.  The result uses RSK to relate these permutations to pattern-avoiding permutations in $S_n$, as well as to relate pattern-avoiding involutions in $\mathcal{L}_{n, k}$ to pattern-avoiding involutions in $S_n$.  The second result is an enumeration of standard skew Young tableaux of \emph{any} fixed shape whose reading words avoid certain patterns.  In particular, it provides a uniform argument to enumerate permutations in $S_n$ and permutations in $\mathcal{L}_{n, k}$ that avoid either $132$ or $213$.  That such a bijection should exist is far from obvious, and it raises the possibility that there is substantially more to be said in this area.  In the remainder of this introduction, we provide a more detailed summary of results.

Both the set of all permutations of a given length and the set of alternating permutations of a given length can be expressed as the set of reading words of the standard Young tableaux of a particular skew shape (essentially a difference of two staircases).  We define a class $\mathcal{L}_{n, k} \subseteq S_{nk}$ of permutations such that $\mathcal{L}_{n,1} = S_n$ is the set of all permutations of length $n$, $\mathcal{L}_{n,2}$ is the set of alternating permutations of length $2n$, and for each $k$ $\mathcal{L}_{n, k}$ is the set of reading words of the standard Young tableaux of a certain skew shape.  In Section \ref{defs}, we provide definitions and notation that we use throughout this paper.  In Sections \ref{k+1sec}, \ref{sec:k+2}, \ref{sec:ouch} and \ref{sec:oddlength}, we use bijective proofs to derive enumerative pattern avoidance results for $\mathcal{L}_{n,k}$.  In Section \ref{k+1sec} we give a simple bijection between elements of $\mathcal{L}_{n, k}$ with no $(k + 1)$-term increasing subsequence and standard Young tableaux of rectangular shape $\langle k^n\rangle$.  In Section \ref{sec:k+2} we exhibit two bijections between the elements of $\mathcal{L}_{n, k}$ with no $(k + 2)$-term increasing subsequence and standard Young tableaux of rectangular shape $\langle(k + 1)^n \rangle$, one of which is a modified version of the famous RSK bijection and the other of which is related to generating trees.  In Section \ref{sec:ouch}, we extend the results of \cite{ouch} to give bijections between the set of permutations of length $n$ with no $(k + 2)$-term increasing subsequence and the set of permutations $w$ with no $(k + 2)$-term increasing subsequence such that both $w$ and $w^{-1}$ lie in $\mathcal{L}_{n, k}$, as well as bijections between the involutions in each set.  In Section \ref{sec:oddlength}, we define a more general class $\mathcal{L}_{n, k;r}$ of permutations with the property that $\mathcal{L}_{n, k; 0} = \mathcal{L}_{n, k}$ and $\mathcal{L}_{n, 2; 1} = A'_{2n + 1}$, the set of down-up alternating permutations of length $2n + 1$.  We describe the appropriate changes to the bijections from the earlier sections that allow them to apply in this setting, although we do not provide proofs of their correctness in this case.

In Section \ref{skewsec}, we broaden our study to arbitrary skew shapes and so initiate the study of pattern avoidance in reading words of skew tableaux of any shape.  In Section \ref{213sec}, we show bijectively that the number of tableaux of shape $\lambda / \mu$ (under a technical restriction on the possible shapes that sacrifices no generality -- see Note \ref{NB}) whose reading words avoid $213$ can be easily computed from the shape.  Notably, the resulting value does not depend on $\lambda$ and is in fact a natural $\mu$-generalization of the Catalan numbers.  Replacing $213$ with $132$, $231$ or $312$ leads to similar results.  In Section \ref{123sec}, we show that enumerating tableaux of a given skew shape whose reading words avoid $123$ can be reduced to enumerating permutations with a given descent set that avoid $123$.  The same result holds for the pattern $321$, but we omit the proof.

\section{Definitions}\label{defs}

A \emph{permutation} $w$ of length $n$ is a word containing each of the elements of $[n]:=\{1, 2, \ldots, n\}$ exactly once.  The set of permutations of length $n$ is denoted $S_n$.  Given a word $w = w_1w_2 \cdots w_n$ and a permutation $p = p_1\cdots p_k \in S_k$, we say that $w$ \emph{contains the pattern $p$} if there exists a set of indices $1 \leq i_1 < i_2 < \ldots < i_k \leq n$ such that the subsequence $w_{i_1}w_{i_2}\cdots w_{i_k}$ of $w$ is order-isomorphic to $p$, i.e., $w_{i_\ell} < w_{i_m}$ if and only if $p_\ell < p_m$.  Otherwise, $w$ is said to \emph{avoid} $p$.  Given a pattern $p$ and a set $S$ of permutations, we denote by $S(p)$ the set of elements of $S$ that avoid $p$.  For example, $S_n(123)$ is the set of permutations of length $n$ avoiding the pattern $123$, i.e., the set of permutations with no three-term increasing subsequence.

A permutation $w = w_1w_2\cdots w_n$ is \emph{alternating} if $w_{1} < w_{2} > w_3 < w_4 > \ldots$.  (Note that in the terminology of \cite{EC1}, these ``up-down'' permutations are \emph{reverse alternating} while alternating permutations are ``down-up'' permutations.  Luckily, this convention doesn't matter: any pattern result on either set can be translated into a result on the other via \emph{complementation}, i.e., by considering $w^c$ such that $w^c_i = n + 1 - w_i$.  Then results for the pattern $132$ and up-down permutations would be replaced by results $312$ and down-up permutations, and so on.)  We denote by $A_n$ the set of alternating permutations of length $n$.

For integers $n, k \geq 1$, define $\mathcal{L}_{n, k} \subseteq S_{nk}$ to be the set of permutations $w = w_{1,1}w_{1,2}\cdots w_{1,k}w_{2,1}\cdots w_{n,k}$ that satisfy the conditions
\begin{description}
\item[\hypertarget{L1}{L1.}] $w_{i,j} < w_{i,j + 1}$ for all $1 \leq i \leq n$, $1 \leq j \leq k - 1$, and
\item[\hypertarget{L2}{L2.}] $w_{i,j + 1} > w_{i+1, j}$ for all $1 \leq i \leq n - 1$, $1 \leq j \leq k - 1$.
\end{description}
Note in particular that $\mathcal{L}_{n, 1} = S_n$ (we have no restrictions in this case) and $\mathcal{L}_{n, 2} = A_{2n}$.  For any $k$ and $n$, $|\mathcal{L}_{n, k}(12\cdots k)| = 0$.  Thus, for monotone pattern-avoidance in $\mathcal{L}_{n,k}$ we should consider patterns of length $k + 1$ or longer.  The set $\mathcal{L}_{n, k}$ has been enumerated by Baryshnikov and Romik \cite{Romik}, and the formulas that result are quite simple for small values of $k$.

\begin{NB}\label{asdescents}
If $w = w_{1,1}\cdots w_{n, k} \in S_{nk}$ satisfies \hyperlink{L1}{L1} and also avoids $12\cdots (k + 2)$ then it automatically satisfies \hyperlink{L2}{L2}, since a violation $w_{i, j + 1} < w_{i + 1, j}$ of \hyperlink{L2}{L2} leads immediately to a $(k + 2)$-term increasing subsequence $w_{i, 1} < \ldots < w_{i, j + 1} < w_{i + 1, j} < \ldots < w_{i + 1, k}$.  Consequently, we can also describe $\mathcal{L}_{n, k}(1\cdots (k + 2))$ (respectively, $\mathcal{L}_{n, k}(1 \cdots (k + 1))$) as the set of permutations in $S_{nk}(1\cdots (k + 2))$ (respectively, $S_{nk}(1\cdots (k + 1))$) whose descent set is (or in fact, is contained in) $\{k, 2k, \ldots, (n - 1)k\}$.  Thus, in Sections \ref{k+1sec}, \ref{sec:k+2} and \ref{sec:ouch} we could replace $\mathcal{L}_{n, k}$ by permutations with descent set $\{k, 2k, \ldots\}$ without changing the content of any theorems.
\end{NB}

A \emph{partition} is a weakly decreasing, finite sequence of nonnegative integers.  We consider two partitions that differ only in the number of trailing zeroes to be the same.  We write partitions in sequence notation, as $\langle \lambda_1, \lambda_2, \ldots, \lambda_n \rangle$, or to save space, with exponential notation instead of repetition of equal elements.  Thus, the partition $\langle 5, 5, 3, 3, 2, 1\rangle$ may be abbreviated $\langle 5^2, 3^2, 2, 1\rangle$.  If the sum of the entries of $\lambda$ is equal to $m$ then we write $\lambda \vdash m$.

Given a partition $\lambda = \langle \lambda_1, \lambda_2, \ldots, \lambda_n \rangle$, the \emph{Young diagram} of shape $\lambda$ is a left-justified array of $\lambda_1 + \ldots + \lambda_n$ boxes with $\lambda_1$ in the first row, $\lambda_2$ in the second row, and so on.  We identify each partition with its Young diagram and speak of them interchangeably.  A \emph{skew Young diagram} $\lambda / \mu$ is the diagram that results when we remove the boxes of $\mu$ from those of $\lambda$, when the diagrams are arranged so that their first rows and first columns coincide.  If $\lambda / \mu$ is a skew Young diagram with $m$ boxes, a \emph{standard Young tableau} of shape $\lambda / \mu$ is a filling of the boxes of $\lambda / \mu$ with $[m]$ so that each element appears in exactly one box, and entries increase along rows and columns.  We denote by $\sh(T)$ the shape of the standard Young tableau $T$ and by $f^{\lambda/\mu}$ the number of standard Young tableaux of shape $\lambda/\mu$.  We identify boxes in a (skew) Young diagram using matrix coordinates, so the box in the first row and second column is numbered $(1, 2)$.  

Given a standard Young tableau, we can form the \emph{reading word} of the tableau by reading the last row from left to right, then the next-to-last row, and so on.  In English notation (with the first row at the top and the last row at the bottom), this means we read rows left to right, starting with the bottom row and working up.  For example, the reading words of the tableaux of shape $\langle n, n - 1, \ldots, 2, 1\rangle / \langle n - 1, n - 2, \ldots, 1\rangle$ are all of $S_n$, and similarly $\mathcal{L}_{n, k}$ is equal to the set of reading words of standard skew Young tableaux of shape $\langle n + k - 1, n + k - 2, \ldots, k\rangle / \langle n - 1, n - 2, \ldots, 1\rangle$, as illustrated in Figure \ref{readingword}.  The other ``usual" reading order, from right to left then top to bottom in English notation, is simply the reverse of our reading order.   Consequently, any pattern-avoidance result in our case carries over to the other reading order by taking the \emph{reverse} of all permutations and patterns involved, i.e., by replacing $w = w_1\cdots w_n$ with $w^r = w_n \cdots w_1$.

\begin{figure}
\begin{center}
\input{tableau.tex}
\end{center}
\caption{A standard skew Young tableau whose reading word is the permutation $7 \, 10 \, 14 \, 8 \, 13 \, 15 \, 4 \, 11 \, 12 \, 1 \, 5 \, 9 \, 2 \, 3 \, 6 \in \mathcal{L}_{5,3}$.}
\label{readingword}
\end{figure}

We make note of two more operations on Young diagrams and tableaux.  Given a partition $\lambda$, the \emph{conjugate partition} $\lambda'$ is defined so that the $i$th row of $\lambda'$ has the same length as the $i$th column of $\lambda$ for all $i$.  The conjugate of a skew Young diagram $\lambda / \mu$ is defined by $(\lambda / \mu)' = \lambda' / \mu'$.  Given a standard skew Young tableau $T$ of shape $\lambda / \mu$, the conjugate tableau $T'$ of shape $(\lambda / \mu)'$ is defined to have the entry $a$ in box $(i, j)$ if and only if $T$ has the entry $a$ in box $(j, i)$.  Geometrically, all these operations can be described as ``reflection through the main diagonal.''  Given a skew Young diagram $\lambda / \mu$, rotation by $180^\circ$ gives a new diagram $(\lambda / \mu)^*$.  Given a tableaux $T$ with $n$ boxes, we can form $T^*$, the \emph{rotated-complement} of $T$, by rotating $T$ by $180^\circ$ and replacing the entry $i$ with $n + 1 - i$ for each $i$.  Observe that the reading word of $T^*$ is exactly the reverse-complement of the reading word of $T$.

The \emph{Schensted insertion algorithm}, or equivalently the \emph{RSK correspondence}, is an extremely powerful tool relating permutations to pairs of standard Young tableaux.  For a description of the bijection and a proof of its correctness and some of its properties, we refer the reader to \cite[Chapter 7]{EC2}.  Our use of notation follows that source, so in particular we denote by $T \leftarrow i$ the tableau that results when we (row-) insert $i$ into the tableau $T$.  Particular properties of RSK will be quoted as needed.

\section{The pattern $12\cdots (k + 1)$}\label{k+1sec}

In this section we give the simplest of the bijections in this paper.

\begin{prop}\label{k+1}
 There is a bijection between $\mathcal{L}_{n, k}(12\cdots (k + 1))$ and the set of standard Young tableaux of shape $\langle k^n\rangle$. 
\end{prop}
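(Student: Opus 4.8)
The plan is to realize the bijection as the reading-word map itself. Given a standard Young tableau $T$ of shape $\langle k^n\rangle$, let $\Phi(T)$ be its reading word; conversely, given $w = w_{1,1}\cdots w_{n,k} \in \mathcal{L}_{n,k}(12\cdots(k+1))$, chop $w$ into its $n$ blocks of length $k$ and stack them in reverse order, i.e.\ let $\Psi(w)$ be the $n \times k$ array with entry $w_{n+1-i,\,j}$ in box $(i,j)$. These two maps are visibly inverse to one another (the bottom row of $\Psi(w)$ is the first block of $w$, which is read first), so it suffices to check that each lands in the correct target set.

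First I would check that $\Psi(w)$ is a genuine standard Young tableau. Its rows increase immediately from condition L1, so the only real content is that its columns increase, i.e.\ that $w_{i,j} > w_{i+1,j}$ for all $i,j$ (note that going down one row in $\Psi(w)$ decreases the block index by one). This is where the pattern hypothesis enters, and it is the crux of the argument. Suppose instead $w_{i,j} < w_{i+1,j}$ for some $i,j$. Splicing a prefix of block $i$ onto a suffix of block $i+1$ yields $w_{i,1} < \cdots < w_{i,j} < w_{i+1,j} < w_{i+1,j+1} < \cdots < w_{i+1,k}$, an increasing subsequence of length $j + (k-j+1) = k+1$ occurring in order in $w$ (all of block $i$ precedes all of block $i+1$), contradicting the avoidance of $12\cdots(k+1)$. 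Hence the columns strictly increase and $\Psi(w)$ has shape $\langle k^n\rangle$.

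In the other direction I would verify that $\Phi(T)$ lies in $\mathcal{L}_{n,k}$ and avoids $12\cdots(k+1)$. Identifying block index $i$ with row $r = n+1-i$ of $T$, condition L1 is the row-monotonicity of $T$, while L2 asks that $T(r,j+1) > T(r-1,j)$, which follows from $T(r-1,j) < T(r-1,j+1) < T(r,j+1)$. For the avoidance, I claim that along any increasing subsequence of the reading word the column indices are strictly increasing: two consecutive chosen entries either lie in a common row (columns increase by the reading order) or pass from a lower row to a strictly higher one, in which case a weakly smaller column, together with row- and column-monotonicity of $T$, would force the later value to be smaller and contradict the increase of values. Since $T$ has only $k$ columns, every increasing subsequence of $\Phi(T)$ has length at most $k$, as desired.

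The main obstacle is the column-monotonicity claim for $\Psi(w)$ and, dually, the bound on increasing subsequences of $\Phi(T)$; both are handled by the same idea of exhibiting (or forbidding) a $(k+1)$-term increasing subsequence by concatenating an increasing run across two adjacent rows. Everything else is bookkeeping, and by Note~\ref{asdescents} one could equally phrase the whole argument in terms of permutations in $S_{nk}(12\cdots(k+1))$ with descent set contained in $\{k, 2k, \ldots, (n-1)k\}$.
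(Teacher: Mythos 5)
Your proposal is correct and takes essentially the same route as the paper: the bijection is the reading-word map (equivalently $T_{i,j} = w_{n+1-i,j}$), the column-monotonicity of $\Psi(w)$ is forced by exhibiting exactly the same spliced $(k+1)$-term increasing subsequence $w_{i,1} < \cdots < w_{i,j} < w_{i+1,j} < \cdots < w_{i+1,k}$ across two adjacent blocks, and your claim that column indices strictly increase along any increasing subsequence of $\Phi(T)$ is just a repackaging of the paper's pigeonhole observation that two entries of $w$ drawn from a common column of $T$ form an inversion. No gaps; nothing further is needed.
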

 
We have $f^{\langle n \rangle} = f^{\langle 1^n \rangle} = 1$ and $f^{\langle n, n \rangle} = f^{\langle 2^n \rangle} = \frac{1}{n + 1}\binom{2n}{n} = C_n$, the $n$th Catalan number.  By the hook-length formula \cite{EC2, sagan} we have 
\[
f^{\langle k^n \rangle} = \frac{(kn)! \cdot 1! \cdot 2! \cdots (k - 1)!}{n! \cdot (n + 1)! \cdots (n + k - 1)!}.
\]
Then Proposition \ref{k+1} says $|\mathcal{L}_{n, k}(12\cdots (k + 1))| = f^{\langle k^n \rangle}$.  For $k = 1$, this is the uninspiring result $|S_n(12)| = 1$.  For $k = 2$, it tells us $|A_{2n}(123)| = C_n$, a result that Stanley \cite{catadd} attributes to Deutsch and Reifegerste.
\begin{proof} 
Given a permutation $w \in \mathcal{L}_{n, k}(12\cdots (k + 1))$, define a tableau $T$ of shape $\langle k^n \rangle$ by $T_{i,j} = w_{n+1-i,j}$.  We claim that this is the desired bijection.  By \hyperlink{L1}{L1} we have $T_{i, j} = w_{n- i + 1,j} < w_{n - i + 1,j+1} = T_{i, j + 1}$ for all $1 \leq i \leq n$, $1 \leq j \leq k - 1$.  Thus, the tableau $T$ is increasing along rows.  Suppose for sake of contradiction that there is some place at which $T$ fails to increase along a column.  Then there are some $i, j$ such that $w_{n - i + 1, j} = T_{i, j} > T_{i + 1, j} = w_{n - i, j}$.   Then $w_{n - i, 1} < w_{n - i, 2} < \ldots < w_{n - i, j} < w_{n - i + 1, j} < w_{n - i + 1, j + 1} < \ldots < w_{n - i + 1, k}$ is an instance of $12\cdots (k + 1)$ contained in $w$, a contradiction, so $T$ must also be increasing along columns.  Since $w$ contains each of the values between $1$ and $nk$ exactly once, $T$ does as well, so $T$ is a standard Young tableau.  

Conversely, suppose we have a standard Young tableau $T$ of shape $\langle k^n\rangle$.  Define a permutation $w = w_{1,1}\cdots w_{n,k} \in S_{nk}$ by $w_{i,j} = T_{n - i + 1, j}$.  If we show that the image of this map lies in $\mathcal{L}_{n,k}(1 \cdots (k +1))$ we will be done, as in this case the two maps are clearly inverses.  Note that
\begin{equation}\label{cols}
w_{i + 1, j} = T_{n - i, j} < T_{n -i + 1, j} = w_{i, j}
\end{equation}
and
\begin{equation}\label{rows}
w_{i, j} = T_{n - i + 1, j} < T_{n - i + 1, j + 1} = w_{i, j + 1}
\end{equation}
because $T$ is increasing along columns and along rows, respectively.  Condition \ref{rows} is exactly \hyperlink{L1}{L1}, while the two conditions together give us that $w_{i + 1, j} < w_{i, j} < w_{i, j + 1}$, and this is \hyperlink{L2}{L2}.  Now choose any $k + 1$ entries of $w$.  By the pigeonhole principle, this set must include two entries with the same second index, and by Equation \ref{cols} these two will form an inversion.  Thus no subsequence of $w$ of length $k + 1$ is monotonically increasing and so $w$ avoids the pattern $12\cdots (k +1)$.  Thus $w \in \mathcal{L}_{n,k}(1\cdots (k + 1))$ and we have a bijection between the set $\mathcal{L}_{n, k}(12\cdots (k + 1))$ and the set of Young tableaux of shape $\langle k^n\rangle$, as desired.
\end{proof}

For example, we have under this bijection the correspondences
\begin{center}
\begin{tabular}{| c | c | c |}
\hline
1 & 3 & 5 \\
\hline
2 & 6 & 7 \\
\hline
4 & 8 & 9 \\
\hline
\end{tabular}
\hspace{.5cm}
$\longleftrightarrow$ 
\hspace{.5cm} 
$489267135 \in \mathcal{L}_{3,3}(1234)$
\end{center}

and 

\begin{center}
\begin{tabular}{|c|c|}
\hline
1 & 3 \\
\hline
2 & 5 \\
\hline
4 & 7 \\

\hline
6 & 8\\
\hline
\end{tabular}
\hspace{.5cm}
$\longleftrightarrow$ 
\hspace{.5cm} 
$68472513 \in \mathcal{L}_{4,2}(123)$.
\end{center}

Both directions of this bijection are more commonly seen with other names.  The map that sends $w \mapsto T$ is actually the Schensted insertion algorithm used in the RSK correspondence.  (For any $w \in \mathcal{L}_{n, k}(1\cdots (k + 1))$, the recording tableau is the tableau whose first row contains $\{1, \ldots, k\}$, second row contains $\{k + 1, \ldots, 2k\}$, and so on.)  The map that sends $T \mapsto w$ is the reading-word map defined in Section \ref{defs}.

\section{The pattern $12\cdots (k +1)(k + 2)$}\label{sec:k+2}

There are several nice proofs of the equality $|S_n(123)| = C_n$ including a clever application of the RSK algorithm (\cite[Problem 6.19(ee)]{EC2}).  In this section, we give two bijective proofs of the following generalization of this result.

\begin{thm}\label{k+2thm} There is a bijection between $\mathcal{L}_{n, k}(12\cdots (k + 1)(k + 2))$ and the set of standard Young tableaux of shape $\langle(k + 1)^n\rangle$ and so 
\[
|\mathcal{L}_{n, k}(12\cdots (k + 1)(k + 2))| = f^{\langle (k + 1)^n \rangle}.
\]
\end{thm}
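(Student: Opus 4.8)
The plan is to realize the bijection as a composite built on the RSK correspondence, using the descent reformulation from Note~\ref{asdescents}. By that note, $\mathcal{L}_{n,k}(12\cdots(k+1)(k+2))$ is exactly the set of $w\in S_{nk}(12\cdots(k+2))$ whose descent set is contained in $D:=\{k,2k,\ldots,(n-1)k\}$. I would apply RSK to such a $w$ to obtain a pair $(P,Q)$ of standard Young tableaux of a common shape $\lambda\vdash nk$. Two standard properties do the initial work: by Schensted's theorem the avoidance of $12\cdots(k+2)$ is equivalent to the longest increasing subsequence of $w$ having length $\le k+1$, i.e.\ $\lambda_1\le k+1$; and since $\mathrm{Des}(w)=\mathrm{Des}(Q)$, the recording tableau $Q$ has all of its descents in $D$. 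Thus after RSK the data of $w$ is equivalent to a pair $(P,Q)$ with $P$ an arbitrary SYT of shape $\lambda$ and $Q$ a SYT of shape $\lambda$ whose descents lie in $D$, subject to $\lambda_1\le k+1$.

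The key step is to understand the descent-restricted recording tableaux $Q$. I would prove the lemma that $\mathrm{Des}(Q)\subseteq D$ holds if and only if the successive value-blocks $\{1,\ldots,k\},\{k+1,\ldots,2k\},\ldots$ each occupy a horizontal strip of $Q$ (no two entries of a block in a common column); this is the statement that $Q$ encodes a flag $\emptyset=\mu^{(0)}\subset\mu^{(1)}\subset\cdots\subset\mu^{(n)}=\lambda$ in which every skew shape $\mu^{(j)}/\mu^{(j-1)}$ is a horizontal strip of size $k$. Equivalently, such $Q$ are in bijection with semistandard tableaux $U$ of shape $\lambda$ and content $(k^n)$, so their number is the Kostka number $K_{\lambda,(k^n)}$. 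Two consequences I would extract here are that each horizontal strip adds at most one row, forcing $\ell(\lambda)\le n$, and hence that every contributing shape satisfies $\lambda\subseteq\Box$, where $\Box:=\langle(k+1)^n\rangle$ is the target rectangle.

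With the shapes now confined to the rectangle, I would finish by complementing inside $\Box$. Complementation of semistandard tableaux in a rectangle turns the content-$(k^n)$ tableau $U$ of shape $\lambda$ into a standard tableau of the complementary skew shape $\Box/\lambda$ (the ``missing'' content is $((k+1)^n)-(k^n)=(1^n)$), giving $K_{\lambda,(k^n)}=f^{\Box/\lambda}$. Therefore the pair $(P,Q)$ is equivalent to a pair consisting of an SYT $P$ of shape $\lambda$ filled with $1,\ldots,nk$ and an SYT of the skew shape $\Box/\lambda$ filled with $nk+1,\ldots,n(k+1)$; gluing these two fillings produces a single SYT of the rectangle $\Box$, and this is the desired tableau. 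Counting, the whole chain yields
\[
|\mathcal{L}_{n,k}(12\cdots(k+1)(k+2))| \;=\; \sum_{\lambda\subseteq\Box} f^{\lambda}\,K_{\lambda,(k^n)} \;=\; \sum_{\lambda\subseteq\Box} f^{\lambda}\,f^{\Box/\lambda} \;=\; f^{\langle(k+1)^n\rangle},
\]
the last equality being the trivial decomposition of a rectangular SYT according to the shape formed by its entries $\le nk$.

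The main obstacle I anticipate is the middle step: proving cleanly that the descent restriction on $Q$ is exactly the horizontal-strip/flag condition (hence counted by $K_{\lambda,(k^n)}$), and that this forces $\ell(\lambda)\le n$ so that $\lambda\subseteq\Box$. Once that structural translation is in hand, the rest is either standard (Schensted's theorem and $\mathrm{Des}(w)=\mathrm{Des}(Q)$) or the formal rectangle-complementation, and the final collapse of the sum to $f^{\langle(k+1)^n\rangle}$ is immediate. I would also keep in mind the alternative ``generating tree'' proof promised for this theorem as an independent check: building $w$ block by block should match adjoining one length-$(k+1)$ row at a time to the rectangle, with matching branching rules.
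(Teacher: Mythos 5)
Your proposal is correct and is essentially the paper's second proof (Section \ref{subsec:2ndbij}, Proposition \ref{RSKthm}): both apply RSK, use Schensted's theorem to bound the shape by $k+1$ columns, observe that the block structure of $w$ forces the recording tableau to grow by horizontal strips of size $k$ (hence missing exactly one of the $k+1$ columns at each step), and then encode that recording data by complementation inside the rectangle $\langle(k+1)^n\rangle$. The only cosmetic differences are that you derive the horizontal-strip condition from $\operatorname{Des}(w)=\operatorname{Des}(Q)$ together with Note \ref{asdescents}, where the paper uses the insertion-path lemma (Theorem \ref{LtoR}), and that you package the complementation step as the identity $K_{\lambda,(k^n)}=f^{\langle(k+1)^n\rangle/\lambda}$, which is precisely what the paper's explicit construction and verification of the tableau $R$ establishes.
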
  

For $k = 1$ this is a rederivation of the equality $|S_n(123)| = C_n$ while for $k = 2$ it implies the following.

\begin{cor} We have
$\displaystyle |A_{2n}(1234)| = f^{\langle 3^n \rangle} = \frac{2(3n)!}{n!(n + 1)!(n + 2)!}$ for all $n \geq 0$.
\end{cor}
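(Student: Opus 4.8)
The plan is to obtain the corollary as an immediate specialization of Theorem \ref{k+2thm} at $k = 2$, followed by a routine simplification of the hook-length formula. First I would invoke the identification recorded in Section \ref{defs}: for every $n$ we have $\mathcal{L}_{n, 2} = A_{2n}$, the set of alternating permutations of length $2n$. Taking $k = 2$, the forbidden pattern $12\cdots(k+1)(k+2)$ becomes $1234$, so that $\mathcal{L}_{n, 2}(12\cdots(k+1)(k+2))$ is exactly $A_{2n}(1234)$. Theorem \ref{k+2thm} then supplies a bijection between this set and the standard Young tableaux of rectangular shape $\langle(k+1)^n\rangle = \langle 3^n \rangle$, giving $|A_{2n}(1234)| = f^{\langle 3^n \rangle}$ at once.

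It then remains only to extract the closed form. For this I would specialize, with $k = 3$, the hook-length evaluation $f^{\langle k^n\rangle} = \frac{(kn)! \cdot 1! \cdot 2! \cdots (k-1)!}{n!(n+1)!\cdots(n+k-1)!}$ quoted just after Proposition \ref{k+1}. The numerator factor $1! \cdot 2!$ collapses to $2$, the denominator becomes $n!(n+1)!(n+2)!$, and the result is
\[
f^{\langle 3^n \rangle} = \frac{2(3n)!}{n!(n+1)!(n+2)!},
\]
which is precisely the asserted expression.

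The only point requiring separate attention is the boundary case $n = 0$, where the bijective setup degenerates. Here $A_0$ consists solely of the empty permutation, which vacuously avoids $1234$, so the left-hand side equals $1$; correspondingly $\langle 3^0 \rangle$ is the empty shape with its single empty tableau, and the formula gives $2 \cdot 0!/(0!\,1!\,2!) = 1$, so all three quantities agree. Since all the substantive content is carried by Theorem \ref{k+2thm}, there is no genuine difficulty to overcome; the main (and essentially the only) thing to get right is the bookkeeping, namely that the rectangle $\langle(k+1)^n\rangle$ specializes to $\langle 3^n \rangle$ and that the hook-length product is correctly evaluated at $k = 3$.
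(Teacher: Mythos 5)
Your proposal is correct and follows exactly the paper's (largely implicit) derivation: specialize Theorem \ref{k+2thm} at $k = 2$ using $\mathcal{L}_{n,2} = A_{2n}$, then evaluate the hook-length formula $f^{\langle k^n\rangle}$ quoted after Proposition \ref{k+1} at the three-column rectangle to get $\frac{2(3n)!}{n!(n+1)!(n+2)!}$. Your extra check of the degenerate case $n = 0$ is a harmless addition beyond what the paper records.
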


We believe this to be the first computation of any expression of the form $A_{2n}(\pi)$ or $A_{2n + 1}(\pi)$ for $\pi \in S_4$.  The complementary result giving the value of $A_{2n + 1}(1234)$ can be found in Section \ref{sec:oddlength}.

The first of the two bijections is a novel recursive bijection that may be thought of as an isomorphism between generating trees for permutations and tableaux.\footnote{In particular, the intermediate object associated to a permutation $w$ by our bijection is essentially the same as the sequence of labels in the generating tree for $\mathcal{L}_{n, k}(1\cdots(k + 2))$ connecting the root to $w$.  This generating tree is closely related to the generating tree for $S_n(1234)$ discussed in \cite{west1995} and \cite{kernelgentrees}.}  
As such, the proof of its correctness is somewhat technical and not entirely enlightening.  The second bijection makes use of Schensted insertion and so is more transparent: many of the technical results that would otherwise be necessary are well-known or are part of the standard proofs of correctness of RSK.

\subsection{A first bijection}\label{subsec:1stbij}

We give a two-step bijection, using as an intermediate set a certain family of tableaux that we will call \emph{good tableaux}.  First, we define good tableaux, construct maps between them and $\mathcal{L}_{n, k}(1\cdots (k + 2))$ and show that these maps are inverse bijections.

A tableau $T$ of shape $\langle(k + 1)^n \rangle$ (i.e., a set $\{T_{i, j}\}$ of positive integers for $1 \leq i \leq n$, $1 \leq j \leq k + 1$) is said to be \emph{good} if it satisfies the following conditions:
\begin{description}
\item[\hypertarget{G1}{G1.}] $1 = T_{i, 1} < T_{i, 2} < \ldots < T_{i, k} < T_{i, k + 1} \leq (n - i + 1)k + 1$ for all $i$, and
\item[\hypertarget{G2}{G2.}] $T_{i, j} \leq T_{i + 1, j} + j - 1$ for all $i, j$.
\end{description}

\begin{figure}[t]
\begin{center}
\input{goodtableau.tex}
\end{center}
\caption{A good tableau in English notation, i.e., with the first row on top.}
\end{figure}

Given a permutation $w \in \mathcal{L}_{n, k}(1\cdots (k + 2))$, we construct a good tableau $T(w)$ recursively as follows: if $n = 1$ then $w = 12\cdots k$ and we define $T(w)$ by $T(w)_{1, j} = j$.  If $n > 1$, for $1 \leq j \leq k + 1$, let $T_{1, j} : = T(w)_{1, j}$ be the smallest entry of $w$ that is the largest entry in a $j$-term increasing subsequence, or $(n - i + 1)k + 1$ if there is no such entry.  Let $w'$ be the permutation that is order-isomorphic to $w$ with its last $k$ entries removed.  Then we fill the rest of the tableau with $T(w')$, so $T(w)_{i + 1, j} = T(w')_{i, j}$ for $1 \leq i \leq n - 1$.  We note that $T_{1, j}$ could be equivalently defined as the last-occurring entry of $w$ that is the largest term in a $j$-term increasing subsequence of $w$ but is not the largest term in a $(j + 1)$-term increasing subsequence.

\begin{figure}[ht]
\begin{center}
\input{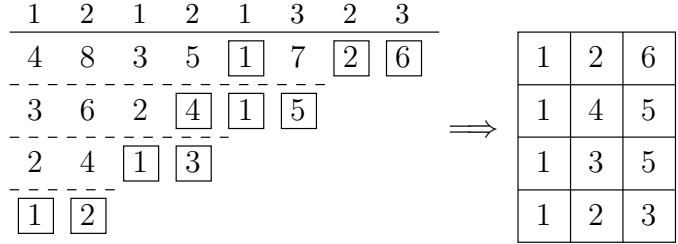}
\end{center}
\caption{An illustration of the application of $T$ to the permutation $w = 48351726 \in \mathcal{L}_{4,2}(1234)$.  For each entry of $w$, the top row indicates the length of the longest increasing subsequence terminating at that entry.}
\label{applyT}
\end{figure}

\begin{prop} \label{alg1}
The algorithm just described is a well-defined map from $\mathcal{L}_{n, k}(1\cdots(k+2))$ to the set of good tableaux of shape $\langle(k + 1)^n\rangle$.
\end{prop}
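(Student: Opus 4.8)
The plan is to induct on $n$. For the base case $n=1$ the only element of $\mathcal{L}_{1,k}(1\cdots(k+2))$ is the increasing word $12\cdots k$, and one checks directly that $T(w)_{1,j}=j$ satisfies G1 (with the upper bound $k+1$ attained in the last column) and satisfies G2 vacuously. For the inductive step the first task is to confirm that the recursion is legitimate. Writing $\tilde w := w_{1,1}\cdots w_{n-1,k}$ for $w$ with its last block deleted, the permutation $w'$ order-isomorphic to $\tilde w$ again satisfies L1 and L2 (both are relations among entries, hence preserved by order-isomorphism) and still avoids $1\cdots(k+2)$ (it is a subword of $w$), so $w'\in\mathcal{L}_{n-1,k}(1\cdots(k+2))$ and $T(w')$ is a good tableau of shape $\langle(k+1)^{n-1}\rangle$ by hypothesis. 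Since $T(w)_{i+1,j}=T(w')_{i,j}$, rows $2,\ldots,n$ of $T(w)$ are precisely the rows of $T(w')$, so they inherit G1 (the index shift $n-1\mapsto n$ matches the bound $(n-i+1)k+1$ correctly) and inherit every instance of G2 except the one comparing row $1$ to row $2$.

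It therefore remains to verify G1 for the first row and the single instance $T(w)_{1,j}\le T(w)_{2,j}+j-1=T(w')_{1,j}+j-1$ of G2. I would first record the patience-sorting interpretation of the first row: since $w$ avoids $1\cdots(k+2)$ while L1 forces each block to be increasing of length $k$, the longest increasing subsequence of $w$ has length $k$ or $k+1$, and $T(w)_{1,j}$ — the smallest entry that is the largest term of some $j$-term increasing subsequence — is exactly the final top of the $j$th pile in the patience sort of $w$; the two descriptions given in the statement agree with this because the smallest value $v$ with $\ell(v)\ge j$ automatically has $\ell(v)=j$. The standard fact that pile tops strictly increase from left to right, together with there being at most one empty pile (necessarily at index $k+1$), immediately gives G1 for row $1$, the rightmost inequality coming from $T(w)_{1,k+1}\le nk+1$.

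The crux is the remaining instance of G2. Here I would exploit that $\tilde w$ is a \emph{prefix} of $w$, so the $\ell$-values of the surviving entries are unchanged and $T(w')_{1,j}$ equals the rank within $\tilde w$ of the $j$th pile top $\tilde p_j$ of $\tilde w$. The last block $w_{n,1}<\cdots<w_{n,k}$ has strictly increasing lengths $a_m:=\ell(w_{n,m})$ (an increasing run extends any subsequence ending at the previous entry), whence $a_m\ge m$ and the values $a_m$ occupy all but one pile index $t$; being processed last, each $w_{n,m}$ becomes the final top of pile $a_m$, so $T(w)_{1,a_m}=w_{n,m}$ while $T(w)_{1,t}=\tilde p_t$. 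When $j=a_m$, a counting argument shows that exactly $w_{n,m}-m$ entries of $\tilde w$ lie below $w_{n,m}<\tilde p_j$, so $T(w')_{1,j}\ge (w_{n,m}-m)+1$; combined with $j=a_m\ge m$ this gives $T(w')_{1,j}+j-1\ge w_{n,m}=T(w)_{1,j}$. When $j=t$, monotonicity of the pile tops shows every last-block entry on a higher pile exceeds $\tilde p_t$, so at most $t-1$ of them lie below $\tilde p_t$, giving $T(w')_{1,j}\ge \tilde p_t-(t-1)$, which is exactly the desired bound since $T(w)_{1,j}=\tilde p_t$.

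The main obstacle is precisely this last instance of G2: everything else is bookkeeping or a direct appeal to induction, but the first-row comparison forces one to track how deleting the final block shifts the pile tops as \emph{values} and their \emph{ranks} in $\tilde w$ simultaneously. The delicate points are the boundary cases in which a pile is empty — which can only be pile $k+1$, since $\ell(\tilde w)\ge k$ — where the sentinel value intervenes and the inequality $T(w)_{1,k+1}=nk+1\le (n-1)k+1+k$ holds with equality and must be checked on its own.
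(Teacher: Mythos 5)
Your proof is correct, and its crux runs along a genuinely different track from the paper's. For condition G1 on the first row the two arguments coincide in substance: your patience-sorting facts (final pile tops strictly increase; at most one empty pile, necessarily the $(k+1)$st since each block gives a $k$-term increasing subsequence) are exactly the paper's direct observations in different clothing. For the single new instance of G2, however, the paper never determines which entries actually occupy the first row. It instead fixes the entry $a$ of $w$ sitting in the position that $T_{2,j} = T(w')_{1,j}$ occupies in $w'$, and splits the $k$ deleted entries into three sets --- at most $j-1$ below $T_{1,j}$ (using that the deleted block is increasing and $T_{1,j}$ is minimal), at most $a - T_{1,j}$ in the interval $[T_{1,j}, a)$, and the rest above $a$ --- which yields $T_{1,j} \le T_{2,j} + j - 1$ uniformly in $j$ with no case analysis. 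You instead prove the sharper structural statement that the first row consists of the last block $w_{n,1}, \ldots, w_{n,k}$ together with one surviving prefix pile top (that is, $T(w)_{1,a_m} = w_{n,m}$ and $T(w)_{1,t} = \tilde p_t$), and then carry out two separate rank counts; both counts check out, including the inequality $a_m \ge m$ and the comparison $w_{n,m} < \tilde p_{a_m}$ forced by the placement rule. The trade-off: the paper's single uniform count is shorter, while your explicit identification of row one buys real insight --- it is essentially the fact the paper only establishes later, inside the proof that $T$ and $w$ are mutually inverse (where it shows the last $k$ entries of $p$ are exactly the first row of $T(p)$ minus one omitted entry), so your route would make that later argument nearly immediate. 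Your flagged sentinel boundary is the right one; just note it splits into two sub-cases, namely pile $k+1$ of $\tilde w$ empty but receiving $w_{n,k}$ (giving $T(w)_{1,k+1} = w_{n,k} \le nk$, with $T(w')_{1,k+1}$ the sentinel $(n-1)k+1$) versus pile $k+1$ of $w$ itself empty (giving the equality $nk+1 = (n-1)k+1+k$), and both are immediate.
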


\begin{proof}
The first entry of the first row of $T(w)$ is the smallest entry in $w$, so $T(w)_{1,1} = 1$.  Now note that the largest term in a $(j + 1)$-term increasing subsequence in $w$ is strictly larger than the term of the subsequence that precedes it, and this term is the largest term of a $j$-term increasing subsequence of $w$.  Thus for each $j$, the smallest entry of $w$ that is the largest term in a $j$-term increasing subsequence is strictly smaller than the smallest entry of $w$ that is the largest term in a $(j + 1)$-term increasing subsequence, assuming the latter exists.  Also, if $n \geq 1$ then every element of $\mathcal{L}_{n, k}$ contains some entry (viz., the $k$th) that is the largest element of a $k$-term increasing subsequence, so at most one copy of $nk + 1$ can appear in the first row of our tableau and it can only appear as the last entry in the row.  Thus, the first row of $T := T(w)$ consists of $k + 1$ distinct values appearing in increasing order, the smallest of which is $1$ and the largest of which is no larger than $nk + 1$.  It follows that condition \hyperlink{G1}{G1} is satisfied for the first row of $T$, so by induction it is satisfied for all rows of $T$.

Because of the recursive nature of the construction, to establish \hyperlink{G2}{G2} for $T$ it suffices to show that $T_{1, j} \leq T_{2, j} + j - 1$ for each $j$.  The number $T_{2, j}$ on the right-hand side of this inequality is equal to $T'_{1, j} := T(w')_{1, j}$, the smallest element of $w'$ that is the largest term in a $j$-term increasing subsequence of $w'$.  Let $a$ be the entry of $w$ in the same position that $T'_{1, j}$ is in $w'$, i.e., the entry that becomes $T'_{1, j}$ after order-isomorphism.  The $k$ entries of $w$ that we remove while forming $w'$ fall into three sets: 
\begin{itemize}
\item those strictly less than $T_{1, j}$, of which there are at most $j - 1$,
\item those in the interval $[T_{1, j}, a)$, of which there are at most $a - T_{1, j}$, and
\item those that are larger than $a$.
\end{itemize}
The difference $a - T'_{1, j}$ is exactly the number of values less than $a$ that we remove while going from $w$ to $w'$, so it is the sum of the sizes of the first two sets above.  Thus
\[
a - T'_{1, j} \leq j - 1 + a - T_{1, j}
\]
and so $T_{1, j} \leq T'_{1, j} + j - 1 = T_{2, j} + j - 1$, as desired.  It follows that our algorithm maps every permutation in $\mathcal{L}_{n, k}(12\cdots(k + 2))$ to a good tableau.  
\end{proof}
Figure \ref{applyT} shows how to apply the map $T$ to the permutation $43851726 \in \mathcal{L}_{4, 2}(1234)$.

We now give a recursive map from good tableaux of shape $\langle(k + 1)^n \rangle$ to $\mathcal{L}_{n, k}(1 \cdots (k + 2))$.  There is only one good tableau $T$ with a single row of length $k + 1$ (the one satisfying $T_{1, j} = j$) and for this tableau we define $w(T) = 12\cdots k$.  Otherwise, for a good tableau $T$ with more than one row, let $m$ be the largest index such that $T_{1, m} = T_{2, m} + m - 1$ and let $T'$ be the tableau that results when we remove the first row of $T$.  Then define $w := w(T)$ by setting $w_{n,1}w_{n,2}\cdots w_{n,k}= T_{1,1}\cdots T_{1, m - 1}T_{1, m + 1} \cdots T_{1, k + 1}$ and choosing the earlier values of $w$ so that the first $(n - 1)k$ values of $w$ are order-isomorphic to $w(T')$.  We say that $T_{1, m}$ is \emph{omitted} from $w$ and that the other entries of the first row of $T$ are \emph{sent to} entries of $w$, and we use the same terminology for entries of larger-numbered rows.  We must show that this process is well-defined, that its output is a permutation in $\mathcal{L}_{n, k}(1\cdots (k + 2))$ and that it really is the inverse of our previous map.  

\begin{prop} \label{alg2}
The algorithm just described is a well-defined function from the set of good tableaux of shape $\langle(k + 1)^n\rangle$ to $\mathcal{L}_{n, k}(1\cdots(k+2))$.
\end{prop}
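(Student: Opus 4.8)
The plan is to induct on $n$, the number of rows. The base case $n=1$ is immediate: the unique good tableau has $T_{1,j}=j$ and the algorithm returns $12\cdots k \in \mathcal{L}_{1,k}(1\cdots(k+2))$. For the inductive step I first check that the recursive instruction makes sense. The index $m$ always exists because equality holds in \hyperlink{G2}{G2} at $j=1$: by \hyperlink{G1}{G1} we have $T_{1,1}=T_{2,1}=1$, so $T_{1,1}=T_{2,1}+1-1$, and hence the set of tight indices is nonempty and has a largest element. Deleting the first row produces a tableau $T'$ of shape $\langle(k+1)^{n-1}\rangle$ whose conditions \hyperlink{G1}{G1} and \hyperlink{G2}{G2} are exactly the instances of \hyperlink{G1}{G1}, \hyperlink{G2}{G2} for the rows of $T$ numbered $\geq 2$ (the bound in \hyperlink{G1}{G1} shifts correctly since $T_{i+1,k+1}\le (n-i)k+1$), so $T'$ is good and the induction applies to it. Finally, the $k$ retained first-row entries are distinct by \hyperlink{G1}{G1}, and the only one that can equal $nk+1$ is $T_{1,k+1}$; but if $T_{1,k+1}=nk+1$ then \hyperlink{G2}{G2} together with the row-$2$ bound of \hyperlink{G1}{G1} forces $T_{2,k+1}=(n-1)k+1$ and hence equality in \hyperlink{G2}{G2} at $j=k+1$, so $m=k+1$ and this entry is precisely the omitted one. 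Thus the retained entries lie in $[nk]$, their complement has size $(n-1)k$, and there is a unique order-isomorphic placement of these complementary values into the first $(n-1)k$ positions matching $w(T')$; this yields a genuine permutation $w$ of $[nk]$.

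To show $w\in\mathcal{L}_{n,k}(1\cdots(k+2))$ I deliberately avoid verifying \hyperlink{L2}{L2} directly. Condition \hyperlink{L1}{L1} is easy: the last row of $w$ is increasing by \hyperlink{G1}{G1}, and the earlier rows inherit \hyperlink{L1}{L1} from $w(T')$ because order-isomorphism preserves the relative order of the first $(n-1)k$ entries. By Note \ref{asdescents}, once I also know that $w$ avoids $1\cdots(k+2)$, condition \hyperlink{L2}{L2} holds automatically. Hence the entire content of the proposition reduces to proving that $w$ has no $(k+2)$-term increasing subsequence.

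For this I plan to strengthen the inductive hypothesis to record the meaning of the first-row entries: for every good tableau $S$, the permutation $w(S)$ has no $c$-term increasing subsequence all of whose values are below $S_{1,c}$. Writing $u$ for the order-isomorphic copy of $w(T')$ in the first $(n-1)k$ positions and $\phi$ for the increasing bijection from $[(n-1)k]$ onto the complement of the retained entries, the strengthened hypothesis for $w(T')$ transports through $\phi$ to the sub-claim that $u$ has no $c$-term increasing subsequence with all values below $T_{1,c}$, for each column $c\neq m$. A direct count of how many retained entries lie below $T_{1,c}$ shows this sub-claim is equivalent to $\phi(T_{2,c})\ge T_{1,c}$, which reduces to $T_{1,c}\le T_{2,c}+(c-1)-\delta$, where $\delta=1$ if $m<c$ and $\delta=0$ otherwise. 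For $c<m$ this is exactly \hyperlink{G2}{G2}; for $c>m$ it is the \emph{strict} form of \hyperlink{G2}{G2}, which holds precisely because $m$ is the largest tight index, so \hyperlink{G2}{G2} is strict at every column beyond $m$. Granting the sub-claim, any increasing subsequence of $w$ splits into a part of length $p$ inside $u$ and a part of length $q$ inside the last row; if $v_s$ is the smallest last-row entry used, then its column $c_s\in\{s,s+1\}$ gives $p\le c_s-1\le s$ by the sub-claim, while $q\le k-s+1$, so $p+q\le k+1$. Thus no $(k+2)$-term increasing subsequence exists, and the same counting re-establishes the strengthened hypothesis for $w$ itself, closing the induction.

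The main obstacle is exactly this sub-claim: everything hinges on showing that the order-isomorphic rescaling $\phi$ lifts the $c$-th threshold value of $u$ above $T_{1,c}$. The delicate point is that the required inequality is tight and switches from the non-strict to the strict form of \hyperlink{G2}{G2} precisely at the column $m$, and it is the maximality built into the definition of $m$ (together with the bookkeeping of retained entries below $T_{1,c}$) that supplies strictness exactly where it is needed. Carrying the strengthened threshold statement through the induction, rather than only the bare membership claim, is what makes this step go through.
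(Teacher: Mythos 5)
Your proof is correct, but it takes a genuinely different route from the paper's. The paper verifies \hyperlink{L2}{L2} directly and proves $(k+2)$-avoidance by a pigeonhole on the $k+1$ columns; both steps rest on a standalone comparison lemma (Lemma \ref{lemma}) about images of entries in consecutive rows, together with a telescoped form of \hyperlink{G2}{G2} down a column (Equation \ref{weak}). You instead discharge \hyperlink{L2}{L2} for free via Note \ref{asdescents} --- a reduction the paper itself invokes only later, in the proof of Proposition \ref{RSKthm} --- and you obtain $(k+2)$-avoidance from a strengthened induction invariant (no $c$-term increasing subsequence of $w(S)$ with all values below $S_{1,c}$), which is precisely the semantic meaning of the first-row entries under the forward map $T$; splitting an increasing subsequence of $w$ at the last row then gives $p+q\le (c_s-1)+(k-s+1)\le k+1$ in one step. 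Your transport computation (count the retained first-row entries below $T_{1,c}$; non-strict \hyperlink{G2}{G2} for $c<m$, strict \hyperlink{G2}{G2} for $c>m$ by maximality of $m$) is essentially the same arithmetic as the paper's proof of Lemma \ref{lemma}, so the two arguments share their core; what yours buys is brevity, no separate \hyperlink{L2}{L2} check, and an invariant that is in effect the first half of the paper's later verification that $T\circ w$ is the identity, while what the paper's organization buys is Lemma \ref{lemma} as a reusable tool, applied again both in its four-case check of \hyperlink{L2}{L2} and (with $b=a$) in the mutual-inverse proposition. One small point to patch: when you ``close the induction'' you need your sub-claim at the omitted column $c=m$ as well (for a putative $m$-term subsequence lying wholly in the prefix with values below $T_{1,m}$), which your statement excluded; it follows by the identical count, since tightness $T_{1,m}=T_{2,m}+m-1$ shows the prefix contains exactly $T_{2,m}-1$ values below $T_{1,m}$, so the non-strict \hyperlink{G2}{G2} suffices there.
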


\begin{proof}In order to show that this process is well-defined, we need to know that a value $m$ actually exists and that the values $w_{n, 1}, \ldots, w_{n, k}$ that we assign are $k$ distinct elements of $[nk]$.  For the former, note that $T_{1, 1} = 1 = T_{2, 1} + 1 - 1$, so there is at least one index $j$ satisfying the needed condition and thus there is a maximal such index and we really can choose a value $m$.  For the latter, we note that condition \hyperlink{G1}{G1} ensures the values $w_{n, 1}, \ldots, w_{n, k}$ will be distinct (in fact, that they will occur in increasing order) and so by induction on $n$ we need only show that each of them lies in $[nk]$.  The only entry of the first row of $T$ that might not lie in $[nk]$ is $T_{1, k + 1}$, provided $T_{1, k+1} = nk + 1$.  In this case, we have by \hyperlink{G1}{G1} and \hyperlink{G2}{G2} that
\[
(n - 1)k + 1 = T_{1, k + 1} - (k + 1) + 1 \leq T_{2, k + 1} \leq (n - 1)k + 1, 
\]
so these inequalities must be equalities and $T_{1, k + 1} - (k + 1) + 1 = T_{2, k + 1}$.  Then $m = k + 1$ and we omit the problematic value.  Thus this algorithm is well-defined as a map from good tableaux of shape $\langle (k + 1)^n\rangle$ to permutations in $S_{nk}$.

As noted in the preceding paragraph, the last $k$ values of $w$ will occur in increasing order and so by induction we have that \hyperlink{L1}{L1} holds for $w$.  Thus, to show that $w \in \mathcal{L}_{n, k}(1\cdots (k + 2))$ we are left to check that $w_{n - 1, j + 1} > w_{n, j}$ for $1 \leq j \leq k - 1$, which will imply \hyperlink{L2}{L2} by induction, and that $w$ contains no increasing subsequence of length $k + 2$.  For both statements we will use the following lemma:

\begin{lemma}\label{lemma}
Given a good tableau $T$, if $T_{i, a}$ and $T_{i + 1, b}$ are both sent to entries of $w(T)$ (i.e., neither of them is omitted) and $T_{i + 1, b} \geq T_{i + 1, a}$ then $T_{i + 1, b}$ is sent to a larger value in $w$ than $T_{i, a}$ is.  
\end{lemma}

\begin{proof}[Proof of lemma.]
We may assume without loss of generality that $i = 1$.  Suppose first that $a < m$.  Then the $a - 1$ numbers $T_{1, 1}, \ldots, T_{1, a - 1}$ are all the values smaller than $T_{1, a}$ that are among the last $k$ entries of $w$.  Thus, the prefix of $w$ of length $(n - 1)k$ contains exactly $T_{1, a} - 1 - (a - 1) = T_{1, a} - a$ values less than $T_{1, a}$.  By condition \hyperlink{G2}{G2}, $T_{1, a} - a \leq T_{2, a} - 1$, so the prefix of $w$ of length $(n - 1)k$ contains at most $T_{2, a} - 1$ values less than $T_{1, a}$.  It follows immediately that any entry of the second row of $T$ that is larger than or equal to $T_{2, a}$ and is sent to an entry of $w$ is sent to an entry larger than $T_{1, a}$.  In particular, $T_{2, b}$ is such an entry.

Suppose instead that $a > m$.  In this case there are $a - 2$ values smaller than $T_{1, a}$ among the last $k$ entries of $w$.  Thus, the prefix of $w$ of length $(n - 1)k$ contains exactly $T_{1, a} - 1 - (a - 2) = T_{1, a} - (a - 1)$ values less than $T_{1, a}$.  By \hyperlink{G2}{G2} and the definition of $m$, $T_{1, a} - (a - 1) < T_{2, a}$, so the prefix of $w$ of length $(n - 1)k$ contains fewer than $T_{2, a}$ values less than $T_{1, a}$.  It follows that any entry of the second row of $T$ that is larger than or equal to $T_{2, a}$ and that is sent to an entry of $w$, is sent to an entry larger than $T_{1, a}$.  This completes the proof of the lemma.  \end{proof}

That $w \in \mathcal{L}_{n, k}$ now follows easily: we know that $w_{n, j}$ is the image of (in fact, is equal to) either $T_{1, j}$ (if $j < m$) or $T_{1, j + 1}$ (if $j \geq m$) and similarly that $w_{n - 1, j + 1}$ is the image of either $T_{2, j + 1}$ or $T_{2, j + 2}$.  In all four cases we can apply Lemma \ref{lemma} and \hyperlink{G1}{G1} to conclude that $w_{n - 1, j + 1} > w_{n, j}$, as desired.

Finally, we must verify that $w$ has no $(k + 2)$-term increasing subsequence.  Choose any $k + 2$ entries of $w$.  Each of these is the image under our algorithm of some entry in $T$.  Since $T$ has $k + 1$ columns, two of these entries must come from the same column.  To show that $w$ has no $(k + 2)$-term increasing subsequence, it is enough to show that of these two entries of $T$, the one with a smaller row number is sent to a smaller value in $w$ than the one with the larger row number.  (This is because we build $w$ so that entries of $T$ with larger row numbers get sent to earlier entries of $w$.)  Then it suffices to show that if two entries of $T$ in the same column are both sent to entries of $w$ and all the entries lying between them in the same column are omitted then the entry with larger row number gets sent to a larger value in $w$, and without loss of generality we may take one of these entries to lie in the first row of $T$.  So suppose that $T_{1, j}$ and $T_{i, j}$ get sent to entries of $w$ while for $1 < \ell < i$, $T_{\ell, j}$ is omitted.  If $j < m$ we have 
\begin{equation} \label{weak}
T_{1, j} \leq T_{2, j} + j - 1 = T_{3, j} + 2(j - 1) = \cdots = T_{i, j} + (i - 1)(j - 1).
\end{equation}
By repeated applications of Lemma \ref{lemma} and \hyperlink{G1}{G1} we have that $T_{i, j}$ is sent to a larger value in $w$ than any of the $(i - 1)(j - 1)$ entries $T_{a, b}$ with $1 \leq b < j - 1$ and $1 \leq a \leq i - 1$ that are sent to a value in $w$.  Thus, after we have taken $i - 1$ steps of our algorithm (constructed the last $(i - 1)k$ values of $w$), we have used at least $(i - 1)(j - 1) + 1$ different values less than or equal to $T_{i, j}$.  Thus, the image of $T_{i, j}$ in $w$ is at least $T_{i, j} + (j - 1)(i - 1) \geq T_{1, j}$, as desired.  If instead $j > m$, the argument carries through identically except that we replace the ``$\leq$'' in Equation \ref{weak} with ``$<$'' and we replace ``$(i - 1)(j - 1)$'' with ``$(i - 1)(j - 1) - 1$.''
\end{proof}

Figure \ref{Applyw} shows how to apply the map $w$ to a good tableau of shape $\langle 3^4 \rangle$.

\begin{figure}
\begin{center}
\scalebox{.95}{\input{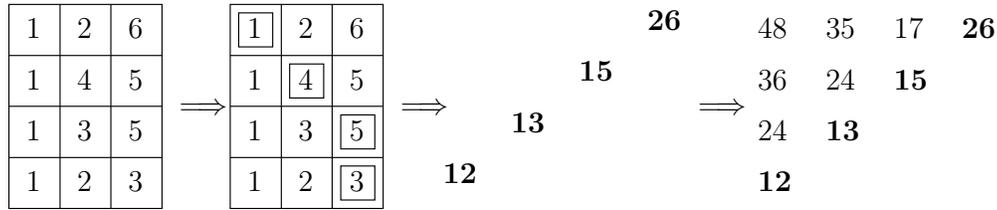}}
\end{center}
\caption{An illustration of the application of the bijection $w$ to a good tableau of shape $\langle 3^4 \rangle$.  The omitted entries of the tableau are boxed.  The entries sent to entries of $w$ are in bold, and we construct the permutation from bottom to top by successively choosing its earlier values to be order-isomorphic to the row below them.  The image of this tableau under $w$ is the top row of the final stage, 48351726.}
\label{Applyw}
\end{figure}

Now that we have described one map going in each direction between the two sets, we may establish our theorem by proving that these maps are inverses.

\begin{prop} The algorithms we have described are mutually-inverse bijections between $\mathcal{L}_{n, k}(1\cdots(k + 2))$ and the set of good tableaux of shape $\langle(k + 1)^n\rangle$.\end{prop}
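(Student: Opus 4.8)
The plan is to prove both compositions are the identity by induction on $n$, exploiting that the two algorithms share a common recursive ``peeling'' step: $T$ strips off the last $k$ entries of the permutation and recurses on what remains, while $w$ strips off the first row of the tableau and recurses. The base case $n = 1$ is immediate, since each side is a single object and the two maps interchange them. For the inductive step, note first that the recursive portions match automatically. Writing $T'$ for $T$ with its first row deleted and $v'$ for the permutation order-isomorphic to $v$ with its last $k$ entries deleted, the construction of $T$ gives $T(v)' = T(v')$, and the construction of $w$ makes the length-$(n-1)k$ prefix of $w(S)$ order-isomorphic to $w(S')$; since order-isomorphic permutations of $[(n-1)k]$ are literally equal, these recursions line up. Hence, by the inductive hypothesis, proving $w(T(v)) = v$ reduces to showing that the last $k$ entries of $w(T(v))$ equal $v_{n,1}, \ldots, v_{n,k}$, and proving $T(w(S)) = S$ reduces to showing that the first row of $T(w(S))$ equals the first row of $S$. (Once the value set used in the last $k$ positions is correct, the order-isomorphic prefix is forced to be the genuine prefix of $v$.)

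Both reduced claims rest on the same dictionary between the first row of a good tableau and longest increasing subsequences, which I would set up explicitly. For $j$ up to $L := $ the length of the longest increasing subsequence of $v$, the quantity $T(v)_{1,j}$ -- the smallest entry of $v$ that is the last term of some $j$-term increasing subsequence -- is exactly the top of the $j$th pile in patience sorting (equivalently the $j$th entry of the first row of the Schensted insertion tableau), and the pile index of an entry equals the length of the longest increasing subsequence of $v$ ending there. Using this, I would show that the last $k$ entries $v_{n,1} < \cdots < v_{n,k}$ of $v$ land in strictly increasing pile columns $\ell_1 < \cdots < \ell_k$ and, being the final entries processed, are never displaced, so $T(v)_{1,\ell_j} = v_{n,j}$. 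Thus the last $k$ entries occupy all but one of the occupied first-row columns of $T(v)$; the remaining (``extra'') column holds a value that is \emph{not} among the last $k$ entries (or, when $L = k$, is the dummy value $nk+1$ in column $k+1$).

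The crux -- and the step I expect to be the main obstacle -- is to identify the tableau-intrinsic omitted index $m$ (the largest $j$ with $T_{1,j} = T_{2,j} + j - 1$, i.e.\ the largest column in which \hyperlink{G2}{G2} holds with equality) with this combinatorially defined extra column. For the tableau direction I would argue forward from the $w$-construction: knowing $m$, the last $k$ entries of $w(S)$ are $S_{1,1}, \ldots, \widehat{S_{1,m}}, \ldots, S_{1,k+1}$ in increasing order, and the pile-index computations already carried out in the proof of Proposition \ref{alg2} -- which show $w(S)$ avoids $12\cdots(k+2)$ and, via Lemma \ref{lemma} together with \hyperlink{G1}{G1}, control precisely where each tableau entry is sent -- pin down the longest increasing subsequence ending at each of these entries, recovering the full first row $S_{1,1}, \ldots, S_{1,k+1}$ (including the reinstatement of $S_{1,m}$ in column $m$, which comes from an earlier entry of $w(S)$).

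For the permutation direction I would run the correspondence backwards, showing that the equality $T_{1,j} = T_{2,j} + j - 1$ in $T(v)$ holds at the largest index exactly when column $j$ is the extra column (or $j = k+1$ under the dummy-value convention when $L = k$). This I would verify by relating the first row of $T(v)$ to its second row $T(v')_{1,\cdot}$ through the single block-insertion step that adjoins the last $k$ entries -- precisely the generating-tree step alluded to in the footnote -- translating the inequality in \hyperlink{G2}{G2} into a count of values in the prefix that lie below each pile top. Establishing this equality-case characterization of $m$, and checking that it selects the correct largest index (in particular forcing $m = k+1$ when $L = k$ makes $T_{1,k+1}$ the dummy $nk+1$), is the delicate part; once it is in hand, both reduced claims hold, both inductions close, and the two maps $v \mapsto T(v)$ and $S \mapsto w(S)$ are mutually inverse, hence bijections between $\mathcal{L}_{n,k}(1\cdots(k+2))$ and the good tableaux of shape $\langle (k+1)^n \rangle$.
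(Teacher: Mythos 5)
Your overall architecture is exactly the paper's: induct on $n$, use the shared recursive structure to reduce $w(T(v)) = v$ to agreement in the last $k$ entries and $T(w(S)) = S$ to agreement in the first row, and set up the dictionary between first-row entries of $T(v)$ and minimal last-terms of $j$-term increasing subsequences (your patience-sorting observations about the last $k$ entries occupying distinct, increasing pile columns are correct and correspond to the paper's remark that the maximal lengths of increasing subsequences terminating at the last $k$ entries are all distinct). But the proposal stops exactly where the proof has to begin: the identification of the omitted index $m$ with the ``extra'' column, which you defer as ``the delicate part,'' is the entire mathematical content of this proposition, and the one place where you indicate how you would close it contains an unsupported claim. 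In the tableau direction you assert that the computations in the proof of Proposition \ref{alg2} ``pin down the longest increasing subsequence ending at each'' of the last $k$ entries of $w(S)$. They do not: Lemma \ref{lemma} together with \hyperlink{G1}{G1} yields only \emph{lower} bounds on those lengths, and the required \emph{upper} bound --- that $S_{1,j}$ for $j < m$ is not the last term of a $(j+1)$-term increasing subsequence --- does not follow from $12\cdots(k+2)$-avoidance, because extending such a subsequence by the $k - j$ larger entries among the last $k$ gives length only $(j+1) + (k-j) = k+1$, which is permitted. The paper closes this hole with a dedicated argument: take the minimal $j \leq m-1$ for which $S_{1,j}$ ends a $(j+1)$-term increasing subsequence, let $a$ be the smallest prefix entry ending a $j$-term increasing subsequence (so $a < S_{1,j}$), show the $j-1$ removed values below $S_{1,j}$ must also lie below $a$ (else a genuine $(k+2)$-pattern appears), and conclude that $a - j + 1 < S_{2,j}$ ends a $j$-term increasing subsequence of the order-isomorphic prefix, contradicting the inductive hypothesis that $S_{2,j}$ is minimal with that property. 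Nothing in your sketch substitutes for this step.

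The permutation direction has the same status: you correctly name the target (equality $T_{1,j} = T_{2,j} + j - 1$ precisely at the extra column, strict inequality $T_{2,\ell} > T_{1,\ell} - \ell + 1$ for $\ell > j$, and the forced $m = k+1$ in the dummy case), but you do not carry out the count that proves it. The paper's verification is short but essential: since $T_{1,j} < T_{1,\ell}$ is \emph{not} among the last $k$ entries, at most $\ell - 2$ of the removed values lie below $T_{1,\ell}$; splitting the removed values into those below $T_{1,\ell}$, those in $[T_{1,\ell}, a)$, and those above $a$ (where $a$ is the entry of $v$ becoming $T_{2,\ell}$) gives $a - T_{2,\ell} \leq (a - T_{1,\ell}) + (\ell - 2)$, hence the strict inequality; meanwhile exactly $j - 1$ removed values lie below $T_{1,j}$, giving the equality $T_{2,j} = T_{1,j} - j + 1$. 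Until both of these arguments are supplied, what you have is a correct plan with the same skeleton as the paper's proof, not a proof: the step you flag as the expected obstacle is indeed the crux, and it is genuinely missing.
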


\begin{proof}
Because of the recursive nature of both constructions, it suffices to check that for a given permutation $p \in \mathcal{L}_{n,k}(1\cdots(k + 2))$, $w(T(p))$ has the same last $k$ entries as $p$ and that for a given good tableau $G$, $T(w(G))$ has the same first row as $G$.  

Choose $p \in \mathcal{L}_{n,k}(1\cdots(k+2))$.  If $p$ contains no $(k + 1)$-term increasing subsequence then the first row of $T := T(p)$ is $p_{n,1}p_{n,2}\cdots p_{n,k}(nk+1)$.  Then as we showed in the first paragraph of the proof of Proposition \ref{alg2}, the last entry $nk + 1$ of the first row is omitted when going from $T$ to $w(T)$ and so the last $k$ entries of $w(T)$ are $p_{n,1}\cdots p_{n,k}$, as desired.  Otherwise, there is some $j \in [k + 1]$ such that the smallest entry of $p$ that is the largest term in a $j$-term increasing subsequence exists but is not among the last $k$ entries of $p$.  As we noted just before the statement of Proposition \ref{alg1}, for each $\ell \in [k + 1]$, the smallest entry of $p$ that is the largest entry in an $\ell$-term increasing subsequence is also the last-occurring entry of $p$ that is the largest entry in an $\ell$-term increasing subsequence but not an $(\ell + 1)$-term increasing subsequence.  Since the last $k$ entries of $p$ occur in increasing order, the maximum lengths of increasing subsequences terminating at these entries are all different.  Then for each $\ell \in [k + 1] \setminus \{j\}$, the smallest entry that is the largest entry in an $\ell$-term increasing subsequence of $p$ lies among the last $k$ entries in $p$, so these last $k$ entries are exactly $T_{1, 1}, \ldots, T_{1, j - 1}, T_{1, j + 1}, \ldots, T_{1, k + 1}$.  

The smallest entry of $p$ that is the largest term in a $j$-term increasing subsequence is equal to $T_{1, j}$, and after we remove the last $k$ entries of $p$ and perform an order-isomorphism it is decreased by exactly $j - 1$ (the smallest $j - 1$ values of the $k$ we remove).  This decreased copy is now the smallest entry of $p'$ that is the largest entry in a $j$-term increasing subsequence of $p'$, so $T_{2, j} = T_{1, j} - j + 1$.  Thus, if we show that for each $\ell > j$ we have $T_{2, \ell} > T_{1, \ell} - \ell + 1$, it will follow that $T_{1, j}$ is the entry omitted from the first row of $T$ when forming $w(T)$, and this will give us our result.

Let $\ell > j$ and let $a$ be the entry of $p$ that becomes $T_{2, \ell}$ after order-isomorphism.  As in the second paragraph of the proof of Proposition \ref{alg1}, we divide the last $k$ entries of $p$ (those removed after forming the first row of $T$) into three categories.  Since $T_{1, j} < T_{1, \ell}$ is not among these last $k$ entries, at most $\ell - 2$ of them are less than $T_{1, \ell}$.  Then there are at most $a - T_{1, \ell}$ entries among the last $k$ that fall in the interval $[T_{1, \ell}, a)$, while the remaining entries are all larger than $a$.  Thus, we remove a total of at most $(a - T_{1, \ell}) + (\ell - 2)$ entries of $p$ that are less than $a$.  The difference $a - T_{2, \ell}$ is exactly the number of entries less than $a$ we remove before applying the order-isomorphism, so 
\[
a - T_{2, \ell} \leq a - T_{1, \ell} + \ell - 2
\]
and thus $T_{1, \ell} - \ell + 1 \leq T_{2, \ell} - 1 < T_{2, \ell}$, as needed.  This demonstrates that $T_{1, j}$ is omitted from $w(T)$, so the last $k$ entries of $w(T)$ are exactly the same as the last $k$ entries of $p$ and so by induction $w \circ T$ is the identity of $\mathcal{L}_{n,k}(1\cdots(k+2))$.  We now proceed to the reverse direction.

Let $G$ be a good tableau.  Suppose first that $G_{1, k + 1} = nk + 1$.  Then by \hyperlink{G1}{G1} and \hyperlink{G2}{G2}, $G_{i, k + 1} = (n - i + 1)k + 1$ for each $i$ and these are exactly the omitted values.  Now apply Lemma \ref{lemma} for $b = a$ and we see that $w(G)$ contains no $(k + 1)$-term increasing subsequence.  (In fact, in this case our bijection is essentially equivalent to that of Proposition \ref{k+1} with some additional order-isomorphisms involved.)  Then the last row of $T(w)$ is exactly the last row of $G$, as desired.  

Now we induct on $n$.  For $n = 1$, $G_{1, k + 1} = k + 1$ and this case was covered in the preceding paragraph.  Otherwise, $G$ has at least two rows and the last $k$ entries of $w := w(G)$ are $G_{1,1}G_{1,2}\cdots G_{1,m - 1}G_{1,m + 1}\cdots G_{1,k + 1}$ for some $m$ with $1 \leq m \leq k + 1$, $G_{1, m} \leq nk$.  In particular, $G_{1, m}$ is an entry of $w$.  We know that each of the entries $G_{1,1}, \ldots, G_{1, m - 1}, G_{1, m + 1}, \ldots, G_{1, k + 1}$ of $w$ is the last entry of $w$ that is the largest term in a $j$-term increasing subsequence but not a $(j + 1)$-term increasing subsequence for some value of $j$, and that these $k$ values of $j$ are distinct.  Then if we can show that $G_{1, m}$ is the smallest entry of $w$ that is the last term in an $m$-term increasing subsequence but not an $(m + 1)$-term increasing subsequence, it will follow that the first row of $T(w)$ is the same as the first row of $G$ and we'll be done by induction.

Note that $G_{1, m}$ cannot be the largest term in an $(m + 1)$-term increasing subsequence of $w$ because if it were, we could append the $(k + 1- m)$-term subsequence $G_{1, m + 1}, \ldots, G_{1, k + 1}$ to the $(m + 1)$-term increasing subsequence ending at $G_{1, m}$ to get a $(k + 2)$-term increasing subsequence of $w$, a contradiction with the $12\cdots (k + 2)$-avoidance of $w$.

Let $w'$ be the permutation order-isomorphic to $w$ with its last $k$ entries removed.  By the inductive hypothesis, $G_{2, m} = G_{1, m} - m + 1$ is the smallest entry of $w'$ that is the largest term in an $m$-term increasing subsequence of $w'$.  When going from $w$ to $w'$ we remove exactly $m - 1$ values less than $G_{1, m}$ (and not $G_{1, m}$ itself) from $w$, so the entry equal to $G_{1, m}$ in $w$ is in the same position that the entry equal to $G_{1, m} - m + 1 = G_{2, m}$ is in $w'$.  Thus, $G_{1, m}$ is the smallest of the first $(n - 1)k$ entries of $w$ that is the largest term in an $m$-term increasing subsequence.  Then $G_{1, m}$ is the smallest entry of $w$ that is the largest term in an $m$-term increasing subsequence \emph{unless} $G_{1, m - 1}$ is the largest term in an $m$-term increasing subsequence of $w$.  

So suppose for sake of contradiction that this is the case.  Then there is some minimal $j \leq m - 1$ such that $G_{1, j}$ is the largest term in a $(j + 1)$-term increasing subsequence in $w$.  Let $a$ be the smallest entry among the first $(n - 1)k$ entries of $w$ that is the largest term in a $j$-term increasing subsequence of $w$.  Then $a < G_{1, j}$.  When we go from $w$ to $w'$, we remove $j - 1$ values less than $G_{1, j}$, and these $j - 1$ values must also be less than $a$.  (Otherwise, we use the $j$-term sequence ending at $a$ together with the increasing subsequence $G_{1, j - 1}G_{1, j}\cdots G_{1, m - 1}G_{1, m + 1}\cdots G_{1, k + 1}$ to give a $(k + 2)$-term increasing subsequence in $w$, a contradiction.)  Thus, the image of $a$ under order-isomorphism is $a - j + 1 < G_{1, j} - j + 1 \leq G_{2, j}$ and is the largest term in a $j$-term increasing subsequence of $w'$.  However, by the inductive hypothesis we have that $G_{2, j}$ itself is the smallest entry of $w'$ that is the largest term in a $j$-term increasing subsequence, a contradiction.  Thus for $j < m$, $G_{1, j}$ is the largest term in a $j$-term increasing subsequence but not a $(j + 1)$-term increasing subsequence, and in particular $G_{1, m - 1}$ is not the largest term in an $m$-term increasing subsequence.  Then since $G_{1, m}$ is the smallest entry of $w$ that is the largest term in an $m$-term increasing subsequence, the first row of $T(w)$ is exactly the same as the first row of $G$.  By induction $T(w(G)) = G$, so $T \circ w$ is the identity on good tableaux.
\end{proof}

We have now shown that the operations $w$ and $T$ are inverse to each other, so they are bijections between $\mathcal{L}_{n, k}(1\cdots (k + 2))$ and the set of good tableaux of shape $\langle(k + 1)^n \rangle$.  To finish the proof of Theorem \ref{k+2thm} we need only the following result:

\begin{prop} \label{tabthm}
There is a bijection between standard Young tableaux and good tableaux of the same shape $\langle(k + 1)^n \rangle$.
\end{prop}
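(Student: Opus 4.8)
The plan is to write down an explicit, non-recursive bijection directly between the two tableau sets, using ranks within ``bottom sub-rectangles.'' Given a standard Young tableau $U$ of shape $\langle(k+1)^n\rangle$, I would define a filling $T = \Phi(U)$ of the same shape by letting $T_{i,j}$ be the rank of $U_{i,j}$ among all entries of $U$ lying in rows $i, i+1, \ldots, n$; that is, $T_{i,j} = \#\{(i',j') : i' \ge i \text{ and } U_{i',j'} \le U_{i,j}\}$. (Equivalently, $T_{i,j}$ is the $(1,j)$-entry of the standardization of the bottom $n-i+1$ rows of $U$.) The reverse map reconstructs $U$ one row at a time from the top: having placed rows $1, \ldots, i-1$, let $M_i$ be the set of values not yet used, so $|M_i| = (n-i+1)(k+1)$, and set $U_{i,j}$ to be the $T_{i,j}$-th smallest element of $M_i$. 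By the definition of rank these two maps are inverse essentially by inspection, so the whole content is to check that each lands in the correct target set.

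That $\Phi(U)$ is a good tableau is a short computation. For \hyperlink{G1}{G1}: since columns of $U$ increase downward, $U_{i,1}$ is the minimum of the bottom sub-rectangle, forcing $T_{i,1} = 1$; row-strictness of $U$ gives $T_{i,j} < T_{i,j+1}$; and the $n-i$ entries of $U$ strictly below $U_{i,k+1}$ in column $k+1$ are all larger than it, so $T_{i,k+1} \le (n-i+1)(k+1) - (n-i) = (n-i+1)k + 1$, which is exactly the bound in \hyperlink{G1}{G1}. Condition \hyperlink{G2}{G2} is the key point: comparing ranks in rows $\ge i$ against rows $\ge i+1$, and using that precisely the $j$ entries $U_{i,1}, \ldots, U_{i,j}$ of row $i$ are $\le U_{i,j}$, one gets $T_{i,j} = j + \#\{(i',j') : i' \ge i+1,\ U_{i',j'} \le U_{i,j}\}$; since $U_{i,j} < U_{i+1,j}$ by column-strictness, that second count is at most $T_{i+1,j} - 1$, giving $T_{i,j} \le T_{i+1,j} + (j-1)$.

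Conversely, I would show the reconstruction produces a genuine standard Young tableau. Well-definedness needs $1 \le T_{i,j} \le |M_i| = (n-i+1)(k+1)$, which holds because \hyperlink{G1}{G1} gives $T_{i,j} \le T_{i,k+1} \le (n-i+1)k+1$; since the $T_{i,j}$ are distinct the chosen values are distinct, and across all rows they exhaust $[n(k+1)]$. Rows of $U$ increase because the $T_{i,j}$ increase along each row. The only real content is column-strictness $U_{i,j} < U_{i+1,j}$, and this is exactly where \hyperlink{G2}{G2} is used. By construction $T_{i,j}$ is the rank of $U_{i,j}$ in $M_i = M_{i+1} \cup (\text{row } i)$, so $T_{i,j} = j + \#\{v \in M_{i+1} : v < U_{i,j}\}$, while $T_{i+1,j} = 1 + \#\{v \in M_{i+1} : v < U_{i+1,j}\}$. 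Then \hyperlink{G2}{G2} rearranges to $\#\{v \in M_{i+1} : v < U_{i,j}\} \le \#\{v \in M_{i+1} : v < U_{i+1,j}\}$; were $U_{i,j} > U_{i+1,j}$, the element $U_{i+1,j} \in M_{i+1}$ would make the left count strictly larger, a contradiction, so $U_{i,j} < U_{i+1,j}$.

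I expect the main obstacle to be purely bookkeeping: keeping the two rank computations (in rows $\ge i$ versus rows $\ge i+1$) straight and handling the strict-versus-weak inequality in the column comparison, so that \hyperlink{G2}{G2} translates exactly into column-strictness and back. Once this identity between the ``rank drop from one row to the next'' and condition \hyperlink{G2}{G2} is nailed down, both directions follow and the maps are manifestly mutually inverse. Composing this bijection with the maps $w$ and $T$ of the previous propositions then identifies $\mathcal{L}_{n,k}(1\cdots(k+2))$ with standard Young tableaux of shape $\langle(k+1)^n\rangle$ and completes the proof of Theorem \ref{k+2thm}.
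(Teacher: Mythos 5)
Your proposal is correct and is essentially the paper's own proof: your rank map $T_{i,j} = \#\{(i',j') : i' \ge i,\ U_{i',j'} \le U_{i,j}\}$ is precisely the non-recursive description of the paper's recursive construction (the paper itself summarizes its map as sending an entry in row $i$ to its rank among entries in rows $i$ or larger), and your inverse via the sets $M_i$ matches the paper's $U_i$ procedure exactly. The verifications also agree in substance --- G1 and G2 by the same counting of smaller entries, well-definedness from $(n-i+1)k+1 \le (n-i+1)(k+1)$, and column-strictness of the reconstruction extracted from G2 --- with only a cosmetic difference in that you argue column-strictness by direct contradiction at each box rather than by the paper's induction along the row.
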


\begin{proof}
To go from a standard Young tableau $T$ to a good tableau $G := G(T)$ of the same shape, take the first row of $G$ to be equal to the first row of $T$.  Let $T'$ be the standard Young tableau that we get by removing the first row of $T$ and applying an order-isomorphism, and let the remaining rows of $G$ be equal to those of $G(T')$.  It's easy to see that the first row of $G$ is increasing and that its minimal entry is $1$.  In addition, the maximal entry in the first row of $G$ is the maximal entry in the first row of $T$, which has $n - 1$ entries below it, and therefore larger than it, in $T$.  Thus we have that this entry is at most $n(k + 1) - (n - 1) = nk + 1$, so we have established \hyperlink{G1}{G1} for $G$ and are left only to check \hyperlink{G2}{G2}.  For any $j$, we have $G_{1, j} = T_{1, j} < T_{2, j}$.  The difference $T_{2, j} - G_{2, j}$ is equal to the number of entries in the first row of $T$ smaller than $T_{2, j}$.  Of these, there are $j - 1$ less than $T_{1, j}$ and at most $T_{2, j} - T_{1, j}$ in the interval $[T_{1, j}, T_{2, j})$, so 
\[
T_{2, j} - G_{2, j} \leq j - 1 + T_{2, j} - T_{1, j} = j - 1 + T_{2, j} - G_{1, j},
\]
so $G_{1, j} \leq G_{2, j} + j - 1$.  Since $G$ is constructed recursively, we have by induction that \hyperlink{G2}{G2} holds and so $G(T)$ is indeed a good tableau.

Now we show how to invert this process.  Given a good tableau $G$ with $n$ rows and $k + 1$ columns, we construct a standard Young tableau $T:= T(G)$.  Let $U_1 = [n(k + 1)]$.  At the $i$th step of our algorithm, for each $j$ we set $T_{i, j}$ to be equal to the $G_{i, j}$th-smallest element of $U_i$, then set $U_{i + 1}$ to be equal to $U_i$ with these $k + 1$ elements removed.  Thus, in the first step we set $T_{1, j} = G_{1, j}$, i.e. the first row of $T$ is equal to the first row of $G$.  We then set $U_2 = U_1 \smallsetminus \{T_{1, 1}, \ldots, T_{1, k + 1}\}$; then $T_{2, 1}$ is the $G_{2, 1} = 1$st smallest element of $U_2$, $T_{2, 2}$ is the $G_{2, 2}$th smallest element of $U_2$, and so on.

We need that this process is well-defined -- in particular, that $U_i$ always has enough elements.  At the $i$th step, the largest element we can request is the $((n - i + 1)k + 1)$th of $U_i$.  But $U_i$ has exactly $(n - i + 1)(k + 1)$ elements and $(n - i + 1)(k + 1) \geq (n - i + 1)k + 1$ because $n \geq i$.  Thus our procedure is well-defined.  

We also need that this procedure always takes as output a standard Young tableau.  By construction and \hyperlink{G1}{G1}, $T$ is a filling of a $(k+1)^n$ rectangle with $[n(k+1)]$ that increases along rows, so we only need that it increases along columns.  By the nature of the construction and the properties of good tableaux, it suffices to show this for the first row.  Note that $T_{1, 1} = 1$, so surely $T_{2, 1} > T_{1, 1}$.  Suppose $T_{2, j} > T_{1, j}$.  Then $T_{2, j + 1} > T_{2, j} > T_{1, j}$ as well, so at least $j$ entries of the first row of $T$ are smaller than $T_{2, j + 1}$.  Note that the difference $T_{2, j + 1} - G_{2, j + 1}$ is exactly the number of entries of the first row of $T$ that are smaller than $T_{2, j + 1}$, so this difference is at least $j$ and thus 
\[
T_{2, j + 1} - T_{1, j + 1} \geq (G_{2, j + 1} + j) - G_{1, j + 1} = j + (G_{2, j + 1} - G_{1, j + 1}) \geq 0,
\]
where the last inequality follows from \hyperlink{G2}{G2}.  Since $T_{2, j + 1} \neq T_{1, j + 1}$ by construction, we have $T_{2, j + 1} > T_{1, j + 1}$, so $T$ in increasing along columns by induction on $j$.

Finally, we have to show that these two operations are inverses.  This is relatively straight-forward: when going from a standard Young tableau to a good tableau, we send an entry in row $i$ to its rank among entries in rows numbered $i$ or larger and when going from a good tableau to a standard Young tableau, we send an entry $\ell$ in row $i$ to a value that is the $\ell$th largest among values not used in rows $1$ through $i - 1$.  These descriptions make the inverse property self-evident.\end{proof}

One direction of the bijection is illustrated in Figure \ref{GtoT} and the other in Figure \ref{TtoG}.

At last, we have given bijections between $\mathcal{L}_{n, k}(1\cdots(k + 2))$ to the set of good tableaux of shape $\langle(k + 1)^n \rangle$ and between this latter set and the set of standard Young tableaux of the same shape, so we may conclude:

\begin{recap}
There is a bijection between $\mathcal{L}_{n, k}(12\cdots (k + 1)(k + 2))$ and the set of standard Young tableaux of shape $\langle(k + 1)^n \rangle$ and so 
\[
|\mathcal{L}_{n, k}(12\cdots (k + 1)(k + 2))| = f^{\langle (k + 1)^n \rangle}.
\]
\end{recap}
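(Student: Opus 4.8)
The plan is to obtain the theorem by composing the two correspondences established in the preceding subsection, so that essentially no new work remains. We have already produced mutually inverse maps $T$ and $w$ between $\mathcal{L}_{n,k}(1\cdots(k+2))$ and the set of good tableaux of shape $\langle(k+1)^n\rangle$, and Proposition \ref{tabthm} supplies a bijection $G \mapsto T(G)$ (with inverse $T \mapsto G(T)$) between good tableaux and standard Young tableaux of the same shape. Composing these two bijections yields a bijection between $\mathcal{L}_{n,k}(1\cdots(k+2))$ and the standard Young tableaux of shape $\langle(k+1)^n\rangle$, which is precisely the assertion of the theorem.

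Concretely, given a permutation in $\mathcal{L}_{n,k}(1\cdots(k+2))$, I would first apply the recursive map $T$ of Proposition \ref{alg1} to produce a good tableau, and then apply the inverse of the map of Proposition \ref{tabthm} to convert that good tableau into a standard Young tableau of shape $\langle(k+1)^n\rangle$. Running the construction backward --- sending a standard Young tableau $T$ to the associated good tableau $G(T)$ and then applying $w$ --- gives the inverse map. Since each of the two constituent maps has already been verified to be a bijection, their composite is automatically a bijection, and no further case analysis or verification of well-definedness is required.

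The enumerative consequence is then immediate: by definition $f^{\langle(k+1)^n\rangle}$ is the number of standard Young tableaux of shape $\langle(k+1)^n\rangle$, so the existence of the composite bijection gives
\[
|\mathcal{L}_{n, k}(12\cdots (k + 1)(k + 2))| = f^{\langle (k + 1)^n \rangle},
\]
as claimed.

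The genuine difficulty in this argument has already been dispatched in the earlier propositions: it lay in checking that the recursive maps $T$ and $w$ are well-defined, land in the correct target sets, and are mutually inverse (the content of Lemma \ref{lemma} together with Propositions \ref{alg1} and \ref{alg2}), as well as in the separate rank-based verification that good tableaux and standard tableaux of a fixed rectangular shape are equinumerous (Proposition \ref{tabthm}). Relative to that work, the assembly of the final statement is purely formal --- it is just the transitivity of ``being in bijection'' --- so I anticipate no obstacle here. (One could alternatively bypass the good-tableaux intermediary entirely by giving the more transparent Schensted-insertion proof promised in the introduction to this section, but the composition above already completes the argument using only the machinery developed so far.)
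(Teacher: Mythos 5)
Your proposal is correct and takes essentially the same route as the paper, which likewise concludes Theorem \ref{k+2thm} by simply composing the bijection of Propositions \ref{alg1} and \ref{alg2} between $\mathcal{L}_{n,k}(1\cdots(k+2))$ and good tableaux with the bijection of Proposition \ref{tabthm} between good tableaux and standard Young tableaux of shape $\langle(k+1)^n\rangle$. The Schensted-insertion alternative you mention parenthetically is indeed the paper's second proof, carried out in Section \ref{subsec:2ndbij}.
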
 

\begin{figure}[t]
\begin{center}
\scalebox{.9}{\input{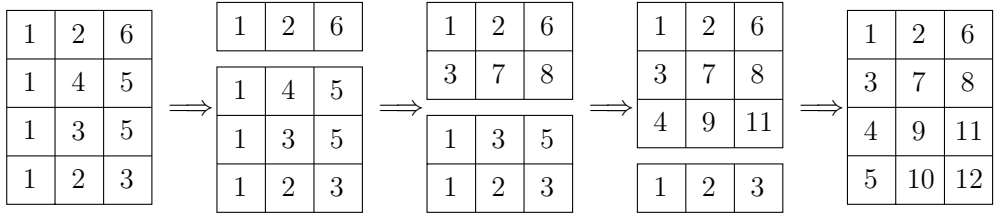}}
\end{center}
\caption{The application of our algorithm sending a good tableau to its corresponding standard Young tableau.  We have $U_1 = [12]$, $U_2 = \{3, 4, 5, 7, 8, 9, 10, 11, 12\}$, $U_3 =  \{4, 5, 9, 10, 11, 12\}$ and $U_4 =  \{5, 10, 12\}$.}
\label{GtoT}
\end{figure}

\begin{figure}[th]
\begin{center}
\scalebox{.9}{\input{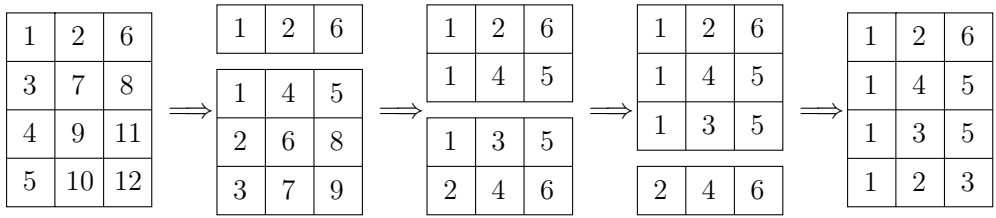}}
\end{center}
\caption{The application of our algorithm sending a standard Young tableau to its corresponding good tableau.}
\label{TtoG}
\end{figure}

\subsection{A second bijection, with a modified version of RSK}\label{subsec:2ndbij}

In this section, we reprove Theorem \ref{k+2thm} using a modification of the RSK insertion algorithm.  Our modification is an extension of the bijection devised by Ouchterlony in \cite{ouch}.  Recall that the RSK insertion algorithm is a map between $S_n$ and the set of pairs $(P, Q)$ of standard Young tableaux such that $\sh(P) = \sh(Q) \vdash n$ with the following properties:

\begin{thm}[{\cite[7.11.2(b)]{EC2}}]\label{LtoR}
If $P$ is a standard Young tableau and $j < k$ then the insertion path of $j$ in $P \leftarrow j$ lies strictly to the left of the insertion path of $k$ in $(P \leftarrow j)\leftarrow k$, and the latter insertion path does not extend below the former.
\end{thm}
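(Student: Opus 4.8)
The statement is the classical \emph{row-bumping lemma} for Schensted insertion, and the plan is to prove it directly by following the two insertion paths row by row and comparing them. First I would fix notation for the cascade of bumps. Write $a_1 = j$ for the element inserted into the first row of $P$; in general let $a_i$ be the element that enters row $i$ during the insertion $P \leftarrow j$, let $c_i$ be the column in which it settles, and let $a_{i+1}$ be the entry it bumps (if any). Do the same for the second insertion, into $P \leftarrow j$: let $b_1 = k$, let $b_i$ be the element entering row $i$, let $c_i'$ be its landing column, and $b_{i+1}$ the entry it bumps. With this notation the theorem amounts to the two assertions (i) $c_i < c_i'$ in every row both paths visit, and (ii) the $b$-path terminates in a row whose index is at most that of the terminating row of the $a$-path.

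The engine of the proof is a simultaneous induction on the row index $i$ establishing both $c_i < c_i'$ and the auxiliary inequality $a_i < b_i$. The base case $a_1 = j < k = b_1$ is given. For the inductive step, suppose $a_i < b_i$. In row $i$, $a_i$ bumps the leftmost entry exceeding it and settles at column $c_i$, so after the \emph{entire} $j$-cascade has run, row $i$ differs from its original state only in the single box $(i,c_i)$, whose value is now $a_i$. When $b_i$ is subsequently inserted into this modified row $i$, it bumps the leftmost entry exceeding $b_i$; since the entry now sitting at $(i,c_i)$ equals $a_i < b_i$, that bump must occur strictly to the right of column $c_i$, giving $c_i < c_i'$. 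Moreover the bumped values $a_{i+1}$ and $b_{i+1}$ are exactly the \emph{original} row-$i$ entries in columns $c_i$ and $c_i'$ respectively (the box at $c_i' > c_i$ was untouched by the $j$-cascade), and since rows increase left to right we get $a_{i+1} < b_{i+1}$, closing the induction.

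For the ``does not extend below'' clause I would argue at termination: suppose the $j$-path ends in row $p$, meaning $a_p$ is appended as the new last — hence largest — entry of row $p$. Then in the modified row $p$ the value $b_p > a_p$ exceeds every entry, so $b_p$ is likewise appended and the $k$-path also ends in row $p$ (it may, of course, have ended even higher). Thus $R'$ never reaches a row strictly below the final row of $R$. The step I expect to be the main obstacle — and the one deserving the most careful writing — is the bookkeeping in the inductive step: one must verify precisely that the $j$-cascade alters each row in exactly one box, that this altered box lies to the left of where the $k$-insertion bumps (so it does not corrupt the comparison at column $c_i'$), and that the boundary cases (a path appending rather than bumping, and the strictness of each inequality for standard, hence distinct-entried, tableaux) are all handled. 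Once that bookkeeping is pinned down, assertions (i) and (ii) follow immediately.
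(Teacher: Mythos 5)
Your proof is correct, and it is essentially the canonical argument: the simultaneous row-by-row induction establishing $a_i < b_i$ and $c_i < c_i'$, together with the observation that each insertion alters each visited row in exactly one box, is precisely the standard proof of the row bumping lemma given in the cited source \cite[7.11.2]{EC2} (and in Fulton's \emph{Young Tableaux}). The paper itself offers no proof of this statement --- it is quoted from \cite{EC2} as a known property of RSK --- so your write-up supplies the standard argument that the citation points to, including the correct handling of the termination case showing the second path cannot descend below the first.
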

\begin{thm}[{\cite[7.23.11]{EC2}}]\label{incsub}
If $w \in S_n$ and $w \overset{\textrm{RSK}}{\longrightarrow} (P, Q)$ with $\sh(P) = \sh(Q) = \lambda$, then $\lambda_1$ is the length of the longest increasing subsequence in $w$.
\end{thm}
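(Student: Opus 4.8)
The plan is to prove this classical result of Schensted by tracking only the first row of the insertion tableau $P$ as the letters $w_1, w_2, \ldots, w_n$ are inserted one at a time, and showing that this row records exactly the lengths of the longest increasing subsequences seen so far. Since $\lambda_1$ is the length of the first row of the final $P$, and the recording tableau $Q$ plays no role in this statement, it suffices to understand how a single row insertion affects the first row.

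The central claim is an \emph{invariant}: after inserting $w_1, \ldots, w_i$, if $r_j$ denotes the entry in box $(1, j)$ of the current tableau (with the convention $r_j = \infty$ when that box is empty), then $r_j$ equals the minimum, over all increasing subsequences of $w_1 \cdots w_i$ of length $j$, of the final term of such a subsequence (and $r_j = \infty$ precisely when no length-$j$ increasing subsequence exists). First I would establish this by induction on $i$, the base case $i = 0$ being vacuous. For the inductive step I would insert $x := w_{i+1}$ and recall that it bumps the leftmost first-row entry exceeding it: if $r_{j-1} < x \le r_j$ (using the conventions $r_0 = -\infty$ and $r_{\lambda_1 + 1} = \infty$, so that the appending case is included), then $x$ lands in box $(1, j)$ and every other first-row entry is unchanged.

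The heart of the argument is to verify that the invariant survives this single insertion, and this is the step I expect to be the main obstacle, since it requires a careful three-way case analysis together with the fact that the first row is strictly increasing. The key observation is that, because $x = w_{i+1}$ occupies the latest position, any increasing subsequence of $w_1 \cdots w_{i+1}$ that uses $x$ must use it as its \emph{final} term. For column $j$ itself: appending $x$ to a length-$(j-1)$ subsequence ending at $r_{j-1} < x$ produces a length-$j$ subsequence ending at $x$, while every length-$j$ subsequence avoiding $x$ ends at a value $\ge r_j \ge x$; hence the new minimum is exactly $x$, matching the updated entry $r_j := x$. For a column $m < j$, one has $r_m \le r_{j-1} < x$, so $x$ cannot lower an already-smaller minimum. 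For a column $m > j$, a length-$m$ subsequence ending at $x$ would require a length-$(m-1)$ subsequence ending below $x$, but $r_{m-1} \ge r_j \ge x$ forbids this; so the minimum at column $m$ is likewise untouched. This confirms that exactly the entry $r_j$ changes, and to the correct value.

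Finally I would conclude. After all $n$ insertions, the length $\lambda_1$ of the first row is the largest $j$ for which box $(1,j)$ is occupied, i.e.\ the largest $j$ with $r_j < \infty$; by the invariant this is precisely the largest $j$ for which $w$ admits an increasing subsequence of length $j$, namely the length $\ell$ of the longest increasing subsequence of $w$. This yields $\lambda_1 = \ell$, and I would note that the single bookkeeping invariant establishes both inequalities $\lambda_1 \ge \ell$ and $\lambda_1 \le \ell$ at once, so no separate converse argument is needed.
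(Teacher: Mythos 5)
Your proof is correct, but there is nothing in the paper to compare it against: the paper states this result as a quotation of Theorem 7.23.11 of \cite{EC2} and gives no proof, using it as a black box (notably in Propositions \ref{RSKthm} and Theorem \ref{thm:doublyLnk}). What you have written is Schensted's classical argument, and it is complete: the invariant that $r_j$ is the smallest value terminating a $j$-term increasing subsequence of the prefix is the right bookkeeping; your three-way case analysis ($m<j$, $m=j$, $m>j$) covers everything; and the conventions $r_0=-\infty$, $r_{\lambda_1+1}=\infty$ correctly fold the appending case into the bumping case (since $w$ is a permutation the entries are distinct, so $r_{j-1}<x\le r_j$ pins down the landing column uniquely, and the old $r_j$ strictly exceeds $x$ when a bump occurs). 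The one step you lean on implicitly and could state outright is that a row insertion modifies the first row exactly once --- the bumped entry recurses into rows below --- which is what justifies ``every other first-row entry is unchanged.'' It is worth noticing that your invariant is precisely the quantity this paper uses elsewhere: in Section \ref{subsec:1stbij} the first row of the good tableau $T(w)$ is defined by letting $T(w)_{1,j}$ be the smallest entry of $w$ that is the largest term in a $j$-term increasing subsequence, and the remark at the end of Section \ref{k+1sec} identifies $w \mapsto T$ with Schensted insertion, so your lemma is exactly the bridge between those two descriptions. Compared with the other standard route --- deriving $\lambda_1 \ge \ell$ from the left-to-right motion of successive insertion points (Theorem \ref{LtoR}) and $\lambda_1 \le \ell$ from a partition of $w$ into $\lambda_1$ decreasing classes --- your single invariant yields both inequalities simultaneously, as you observe, at the cost of a slightly more delicate induction.
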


Now we describe a bijection from $\mathcal{L}_{n, k}(12 \cdots (k + 2))$ to pairs $(P, R)$ of standard Young tableaux such that $P$ has $nk$ boxes, $R$ has $n$ boxes, and the shape of $R$ can be rotated $180 ^ \circ$ and joined to the shape of $P$ to form a rectangle of shape $\langle(k + 1)^n \rangle$.  (In other words, $\sh(P)'_i + \sh(R)'_{k + 2 - i} = n$ for $1 \leq i \leq k + 1$.)  Observe that the set of such pairs of tableaux is in natural bijection with the set of standard Young tableaux of shape $\langle(k + 1)^n \rangle$: given a tableau of shape $\langle(k + 1)^n \rangle$, break off the portion of the tableau filled with $nk + 1, \ldots, n(k + 1)$, rotate it $180^\circ$ and replace each value $i$ that appears in it with $nk + n + 1 - i$.

Given a permutation $w = w_{1, 1}w_{1, 2} \cdots w_{1, k}w_{2, 1} \cdots w_{n, k}$, let $P_0 = \emptyset$ and let $P_i = ( \cdots ((P_{i - 1}\leftarrow w_{i, 1}) \leftarrow w_{i, 2}) \cdots) \leftarrow w_{i, k}$ for $1 \leq i \leq n$.  Define $P := P_n$, so $P$ is the usual RSK insertion tableau for $w$.  Define $R$ as follows: set $R_0 = \emptyset$ and $\lambda_i = \sh(P_i)$.  Observe that by Theorem \ref{LtoR}, $\lambda_i / \lambda_{i - 1}$ is a horizontal strip of size $k$ and that by Theorem \ref{incsub}, $\lambda_i / \lambda_{i - 1}$ stretches no further right than the $(k + 1)$th column.  Thus there is a unique $j$ such that $\lambda_i / \lambda_{i- 1}$ has a box in the $\ell$th column for all $\ell \in [k + 1]\setminus \{j\}$.  Let $R_i$ be the shape that arises from $R_{i - 1}$ by adding a box filled with $i$ in the $(k + 2 - j)$th column, and let $R := R_n$.  This map is illustrated in Figure \ref{RSK}.

\begin{figure}
\begin{center}
\input{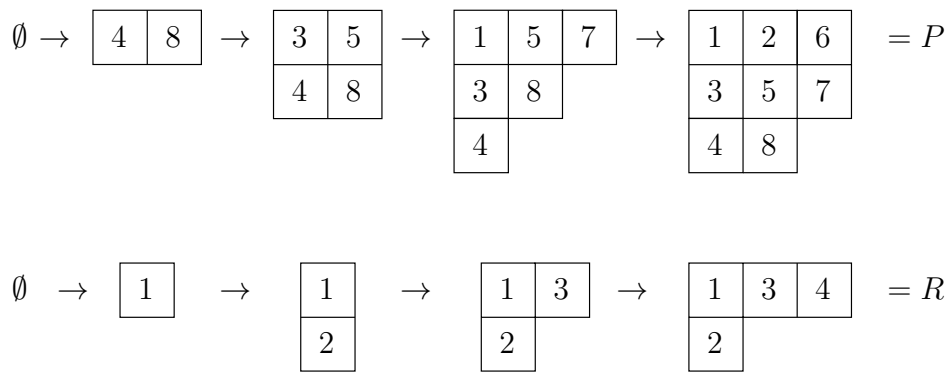}
\end{center}
\caption{An application of our modified version of RSK to the permutation $48351726 \in \mathcal{L}_{4, 2}(1234)$.  Note that only every other insertion step is shown in the construction of $P$.}
\label{RSK}
\end{figure}

\begin{prop}\label{RSKthm}
The algorithm just described is a bijection between the set $\mathcal{L}_{n, k}(12 \cdots (k + 2))$ and pairs $(P, R)$ of standard Young tableaux such that $P$ has $nk$ boxes, $R$ has $n$ boxes, and $\sh(R)$ can be rotated and joined to $\sh(P)$ to form a rectangle of shape $\langle(k + 1)^n \rangle$.
\end{prop}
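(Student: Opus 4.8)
The plan is to recognize this map as ordinary Schensted insertion of $w$ (producing $P$) together with a re-encoding of the usual recording data into the single tableau $R$, and to prove bijectivity by reconstructing $w$ explicitly from $(P,R)$. Throughout I would use only the two quoted facts, Theorem \ref{LtoR} and Theorem \ref{incsub}. Write $j_i$ for the index (depending on the step $i$) of the unique column in $[k+1]$ receiving no box when the block $w_{i,1}\cdots w_{i,k}$ is inserted, so that the box of $R$ labelled $i$ lies in column $k+2-j_i$. The argument splits into (i) checking the forward map is well defined and outputs a pair of the asserted shape, and (ii) recovering $w$ from $(P,R)$.

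For (i), condition \hyperlink{L1}{L1} makes each block $w_{i,1} < \cdots < w_{i,k}$ increasing, so Theorem \ref{LtoR} forces the successive insertion paths to move strictly rightward; hence $\lambda_i/\lambda_{i-1}$ is a horizontal strip of exactly $k$ boxes in distinct columns. Since $w$ avoids $12\cdots(k+2)$, no prefix of $w$ has an increasing subsequence of length $k+2$, so by Theorem \ref{incsub} every $\lambda_i$ has at most $k+1$ columns, exactly one index $j_i \in [k+1]$ receives no box, and the rule defining $R$ makes sense. To see that $R$ is a genuine standard Young tableau I would note that the length of its column $c$ after $i$ steps is $|\{\ell \le i : j_\ell = k+2-c\}|$, while $\sh(P_i)'_j = i - |\{\ell \le i : j_\ell = j\}|$; the weakly-decreasing-column condition demanded of $R$ is then literally the same inequality as the weakly-decreasing-column condition for $P_i$, which holds because each $P_i$ is a tableau, and standardness of $R$ is immediate since the labels $1,\ldots,n$ are added in increasing order at addable corners. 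Finally the complementary-shape identity $\sh(P)'_\ell + \sh(R)'_{k+2-\ell} = n$ drops out by counting: column $\ell$ of $P$ fails to grow exactly at the steps with $j_i = \ell$, and those are precisely the steps contributing a box to column $k+2-\ell$ of $R$.

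For (ii), I would read the values $j_i$ off $R$, which by the formula above determines every intermediate shape $\lambda_i$, hence the whole chain $\emptyset = \lambda_0 \subset \lambda_1 \subset \cdots \subset \lambda_n = \sh(P)$ of horizontal strips. Together with $P$ this is exactly the recording data for reverse Schensted insertion: processing $i = n, n-1, \ldots, 1$ and reverse-inserting the $k$ boxes of $\lambda_i/\lambda_{i-1}$ from right to left recovers the block $w_{i,k},\ldots,w_{i,1}$, and reassembling yields a word $w \in S_{nk}$. The reconstructed $w$ avoids $12\cdots(k+2)$ because $\sh(P)$ has at most $k+1$ columns (Theorem \ref{incsub}), it satisfies \hyperlink{L1}{L1} because reverse-inserting a horizontal strip returns its entries in increasing order, and it then satisfies \hyperlink{L2}{L2} automatically by Note \ref{asdescents}. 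Since the two procedures undo one another step by step, they are mutually inverse bijections.

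The step I expect to be the main obstacle is the \hyperlink{L1}{L1} claim inside (ii): one must show that reverse-inserting the horizontal strip $\lambda_i/\lambda_{i-1}$ in right-to-left order ejects its $k$ values in strictly \emph{decreasing} order, so that the recovered block is increasing. This is exactly the converse content of Theorem \ref{LtoR}, and care is needed to invoke it correctly -- in particular to confirm that the rightmost box of the strip was produced last (hence is reverse-inserted first) and that the ejected values are strictly monotone. Everything else is either bookkeeping (the counting identity) or an automatic consequence of each $P_i$ being a tableau (the validity of $R$).
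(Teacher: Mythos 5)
Your proposal is correct and follows essentially the same route as the paper: the forward direction (horizontal strips via Theorem \ref{LtoR}, at most $k+1$ columns via Theorem \ref{incsub}, and standardness of $R$ by observing that a row-violation in $R_i$ would force $P_i$ to fail to have partition shape) matches the paper's argument, and your inverse --- decoding $R$ into the chain of intermediate shapes and reverse-inserting --- is just an explicit unwinding of the paper's reconstruction of the recording tableau $Q$ followed by inverse RSK, with L1 and L2 recovered exactly as the paper does via Theorem \ref{LtoR} and Note \ref{asdescents}. The step you flag as the main obstacle (that reverse-inserting the strip right-to-left ejects values monotonically, the converse content of Theorem \ref{LtoR}) is handled by the paper with the same citation and no additional argument, so your treatment is no less complete than the original.
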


\begin{proof}
By construction, it is clear that $P$ is a standard Young tableau with $nk$ boxes and that $R$ is a shape with $n$ boxes filled with $[n]$ such that rotating $R$ by $180^\circ$ we may join it to $P$ in order to get a rectangle of shape $\langle (k + 1)^n \rangle$.  So it is left to show that $R$ is actually a standard Young tableau and that this process is a bijection.

The tableau $R$ is constructed in such a way that it is automatically increasing down columns.  Also, $R$ increases across rows unless at some intermediate stage it is not of partition shape, i.e., for some $i, j$ we have that $R_i$ contains more boxes in column $j + 1$ than in column $j$.  But in this case it follows from the construction of $R$ that $P_i$ has more boxes in column $k + 2 - j$ than in column $k + 1 - j$.  Since we have by properties of RSK that every $P_i$ is of partition shape, this is absurd, and so $R$ really is a standard Young tableau.

Finally, we need that this algorithm is invertible and that its inverse takes pairs of tableaux of the given sort to permutations with the appropriate restrictions.  Invertibility is immediate, since from a pair $(P, R)$ of standard Young tableaux of appropriate shapes we can construct a pair of standard Young tableaux $(P, Q)$ of the same shape with $w \mapsto (P, R)$ under our algorithm exactly when $w \overset{\textrm{RSK}}{\longrightarrow} (P, Q)$: if $R$ has entry $i$ in column $j$, place the entries $ki - k + 1, ki - k + 2, \ldots, ki$ into columns $1, \ldots, k + 1- j, k + 3 - j, \ldots, k + 1$ of $Q$, respectively.  Moreover, by Theorem \ref{LtoR} we have that the preimage under RSK of this pair $(P, Q)$ must consist of $n$ runs of $k$ elements each in increasing order, i.e., it must satisfy \hyperlink{L1}{L1}, and by Theorem \ref{incsub} it must have no increasing subsequence of length $k + 2$.  Then by the remarks in Section~\ref{defs} following the definition of $\mathcal{L}_{n, k}$ we have that the preimage satisfies \hyperlink{L2}{L2} as well.  This completes the proof.
\end{proof}

\begin{NB}
Observe that Figures \ref{applyT} through \ref{RSK} demonstrate that the bijection of this section is different than the bijection of the preceding subsection.  Moreover, the two are not related by reverse-complementation of permutations and/or rotated-complementation of tableaux.
\end{NB}

\section{Doubly-alternating permutations and their generalization}\label{sec:ouch}

In \cite{ouch}, a variety of enumerative results were obtained for pattern avoidance in \emph{doubly-alternating permutations}.  These are permutations $w$ such that both $w$ and $w^{-1}$ are alternating.\footnote{These permutations have some independent interest; for example, Ira Gessel has conjectured (unpublished) that for fixed $n$, the doubly-alternating permutations are the maximally-sized sets of the form $\beta_n(S, T) = \{w \in S_n \mid \operatorname{Des}(w) = S, \operatorname{Des}(w^{-1}) = T\}$, where $\operatorname{Des}$ gives the descent set of a permutation.  Foulkes \cite{Foulkes} and Stanley \cite{StanAltSym} enumerated doubly-alternating permutations using symmetric functions.} One of these results in particular is similar in flavor to the results we have discussed so far, namely:

\begin{thm}[{\cite[Theorem 5.2]{ouch}}] 
There exists a bijection between the set of doubly alternating permutations of length $2n$ that avoid the pattern $1234$ and $S_n(1234)$.
\end{thm}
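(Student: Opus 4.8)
The plan is to run RSK and translate everything into the language of pairs of tableaux, and then to reduce each tableau to one of half the size by a complementation that is essentially the $R$-construction of Proposition~\ref{RSKthm}. First I would apply RSK to write $w\overset{\textrm{RSK}}{\longrightarrow}(P,Q)$ with $\sh(P)=\sh(Q)=\lambda\vdash 2n$, recalling that then $w^{-1}\overset{\textrm{RSK}}{\longrightarrow}(Q,P)$. By Theorem~\ref{incsub}, $w$ avoids $1234$ exactly when $\lambda$ has at most three columns. By the descent property of RSK, $w$ (respectively $w^{-1}$) is an up-down permutation precisely when $Q$ (respectively $P$) has no descent in an odd position, i.e.\ when each consecutive pair $\{2i-1,2i\}$ of entries occupies two distinct columns and so forms a horizontal $2$-strip; by Note~\ref{asdescents} the ``contained in'' and ``equal to'' versions of this descent condition coincide for $1234$-avoiders, so this really does capture double-alternation. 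Thus the set in question is in bijection with pairs $(P,Q)$ of standard Young tableaux of a common shape $\lambda\vdash 2n$ having at most three columns, each of which has all of its blocks $\{2i-1,2i\}$ forming horizontal strips.

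Next I would reduce a single such restricted tableau to an ordinary tableau with $n$ boxes. A tableau whose blocks $\{2i-1,2i\}$ are horizontal strips is determined by its chain of shapes $\emptyset=\mu^{0}\subset\mu^{1}\subset\cdots\subset\mu^{n}=\lambda$, each step a horizontal $2$-strip: the no-odd-descent condition forces the larger of $2i-1,2i$ into the higher of the two new boxes, so the chain reconstructs the filling. (This chain is exactly the data recorded by the auxiliary tableau $R$ of Proposition~\ref{RSKthm}.) I would then complement this chain: writing the columns of $\mu^{i}$ as $a\ge b\ge c$ with $a+b+c=2i$, send $\mu^{i}$ to the partition with columns $(\,i-c,\ i-b,\ i-a\,)$, a complementation within the $3\times i$ box followed by reversing the column order. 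Since each column of $\lambda$ gains at most one box per block we have $a,b,c\le n$, so these complements are genuine partitions; and a direct check shows that each admissible horizontal-$2$-strip step (which skips exactly one column $j\in\{1,2,3\}$) corresponds to adding a single box in column $4-j$ of the complemented chain, with partition-admissibility preserved in both directions. Hence each restricted tableau of shape $\lambda$ maps bijectively to an ordinary standard Young tableau of the shape $\nu$ with columns $(n-c,n-b,n-a)$, so that $\nu\vdash n$ and $\nu$ again has at most three columns.

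Applying this complementation to both $P$ and $Q$ --- which is legitimate because both satisfy the horizontal-strip condition and because the target shape $\nu$ depends only on the common shape $\lambda$ --- yields a pair $(P',Q')$ of standard Young tableaux of one and the same shape $\nu\vdash n$ with at most three columns. Feeding $(P',Q')$ into inverse RSK produces a permutation $w'\in S_n$ whose longest increasing subsequence has length $\nu_1\le 3$, i.e.\ $w'\in S_n(1234)$; conversely RSK carries any element of $S_n(1234)$ to such a pair. Every arrow in the chain $w\mapsto(P,Q)\mapsto(P',Q')\mapsto w'$ is a bijection, so the composite is the desired bijection.

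The RSK dictionary (Theorems~\ref{LtoR} and \ref{incsub} together with the descent property) is routine; the real work is the complementation of the middle paragraph, namely checking that ``skip a column $\leftrightarrow$ add a column'' is a genuine shape-respecting bijection between horizontal-$2$-strip chains and ordinary chains with partition-admissibility preserved in both directions. I expect this admissibility bookkeeping to be the main obstacle. It is also exactly where $1234$-avoidance is used: it is Theorem~\ref{incsub} that caps the number of columns at three and so keeps the entire construction inside a width-three strip, without which there is no complementary rectangle to speak of. Since this recovers Ouchterlony's Theorem~5.2, I would finally cross-check the resulting bijection against the description in \cite{ouch}.
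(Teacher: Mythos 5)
Your proposal is correct and takes essentially the same route as the paper: the paper's proof of the generalization (Theorem \ref{thm:doublyLnk}) applies RSK to $w$, uses $w^{-1}\overset{\textrm{RSK}}{\longrightarrow}(Q,P)$ to justify encoding \emph{both} $P$ and $Q$ via the column-pairing of Proposition \ref{RSKthm} as a pair of $n$-box standard Young tableaux of one common shape with at most $k+1$ columns, and then applies inverse RSK to land in $S_n(1\cdots(k+2))$ --- exactly your pipeline $w\mapsto(P,Q)\mapsto(P',Q')\mapsto w'$ at $k=2$. Your chain-of-shapes complementation, including the ``skip column $j$ $\leftrightarrow$ add a box in column $4-j$'' bookkeeping, is an explicit rederivation of that $R$-construction, as you yourself note.
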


We extend this result as follows:

\begin{thm}\label{thm:doublyLnk}
For all $n, k \geq 1$, the set of permutations $w \in \mathcal{L}_{n, k}(1 \cdots (k + 2))$ such that $w^{-1} \in \mathcal{L}_{n, k}(1 \cdots (k + 2))$ is in bijection with $S_n(1\cdots (k + 2))$.
\end{thm}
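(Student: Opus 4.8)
The plan is to route everything through the RSK correspondence, in the spirit of the $k=2$ case of \cite{ouch}, and to read off the ``doubly-$\mathcal{L}$'' condition directly from the pair of tableaux. Given $w$, write $w \overset{\mathrm{RSK}}{\longrightarrow}(P,Q)$ with $\mathrm{sh}(P)=\mathrm{sh}(Q)=\lambda\vdash nk$. I will use three standard facts about RSK: that $w^{-1}\overset{\mathrm{RSK}}{\longrightarrow}(Q,P)$; that $\lambda_1$ is the length of the longest increasing subsequence of $w$ (Theorem \ref{incsub}); and that $\mathrm{Des}(w)$ equals the descent set of the recording tableau $Q$ (see \cite[Chapter 7]{EC2}). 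By Note \ref{asdescents}, the condition $w\in\mathcal{L}_{n,k}(1\cdots(k+2))$ is equivalent to $\lambda_1\le k+1$ together with $\mathrm{Des}(Q)\subseteq\{k,2k,\ldots,(n-1)k\}$, and applying the same analysis to $w^{-1}\to(Q,P)$ shows that $w^{-1}\in\mathcal{L}_{n,k}(1\cdots(k+2))$ contributes exactly the extra condition $\mathrm{Des}(P)\subseteq\{k,2k,\ldots,(n-1)k\}$. Thus the permutations in question biject with pairs $(P,Q)$ of standard Young tableaux of a common shape $\lambda\vdash nk$ with $\lambda_1\le k+1$ such that in each of $P$ and $Q$ every block $\{(j-1)k+1,\ldots,jk\}$ is a horizontal strip (equivalently, has no internal descent); call such tableaux \emph{block tableaux}. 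On the target side, RSK identifies $S_n(1\cdots(k+2))$ with pairs of standard Young tableaux of a common shape $\mu\vdash n$ with $\mu_1\le k+1$.

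The heart of the argument is a shape-by-shape bijection between block tableaux of shape $\lambda$ and ordinary standard Young tableaux of a shape $\gamma(\lambda)$ that is a partition of $n$ and depends only on $\lambda$. A block tableau of shape $\lambda$ is the same datum as a chain $\emptyset=\lambda^{(0)}\subset\cdots\subset\lambda^{(n)}=\lambda$ in which each $\lambda^{(i)}/\lambda^{(i-1)}$ is a horizontal strip of size $k$ and each $\lambda^{(i)}$ has at most $k+1$ columns. Since a horizontal strip adds at most one box to each of the $\le k+1$ columns, step $i$ omits exactly one column $s_i\in[k+1]$, and the requirement that each $\lambda^{(i)}$ be a partition translates into the condition that in every prefix of $(s_1,\ldots,s_n)$ column $c$ is omitted no more often than column $c+1$. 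Setting $t_i=k+2-s_i$ turns this into the lattice (ballot) condition on the word $(t_1,\ldots,t_n)\in[k+1]^n$, and lattice words of content $\gamma$ are precisely the reading data of standard Young tableaux of shape $\gamma$. A short count gives $\gamma_a=n-\lambda'_{k+2-a}$, i.e.\ $\gamma$ is the $180^\circ$ complement of $\lambda'$ in the $(k+1)\times n$ rectangle; in particular $\gamma\vdash n$ and $\gamma$ depends only on $\lambda$. Transposing the resulting tableau yields a standard Young tableau $\tilde P$ of shape $\gamma(\lambda)'\vdash n$ with at most $k+1$ columns, and the map $P\mapsto\tilde P$ is a bijection from block tableaux of shape $\lambda$ onto \emph{all} standard Young tableaux of shape $\gamma(\lambda)'$.

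Assembling the pieces gives the desired bijection $w\to(P,Q)\to(\tilde P,\tilde Q)\overset{\mathrm{RSK}^{-1}}{\longrightarrow}\sigma$. Because $P$ and $Q$ share the shape $\lambda$, the tableaux $\tilde P$ and $\tilde Q$ share the shape $\gamma(\lambda)'\vdash n$ with at most $k+1$ columns, so $\sigma\in S_n(1\cdots(k+2))$ by Theorem \ref{incsub}; and every step is individually invertible (RSK and its symmetry, the chain/lattice-word correspondence, and the final inverse RSK), with the recovered $(P,Q)$ automatically of a common shape since $\gamma(\lambda)'$ determines $\lambda$. I expect the main obstacle to be the middle step: verifying carefully that the omitted-column statistic is well defined and that the partition condition on the chain matches the lattice condition \emph{exactly}, so that the correspondence is onto all standard Young tableaux of shape $\gamma(\lambda)'$, together with pinning down the complement formula for $\gamma(\lambda)$ and the bookkeeping of the final transpose needed to convert the ``at most $k+1$ rows'' output of the lattice-word picture into the ``at most $k+1$ columns'' required by the increasing-subsequence constraint. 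The RSK translations in the opening and closing steps are routine given the quoted theorems.
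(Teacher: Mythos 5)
Your proposal is correct and takes essentially the same approach as the paper: RSK plus the symmetry $w^{-1}\overset{\textrm{RSK}}{\longrightarrow}(Q,P)$ reduces the problem to compressing each of the $nk$-box tableaux $P$ and $Q$ into an $n$-box tableau of the complementary shape in the $\langle (k+1)^n\rangle$ rectangle, after which inverse RSK lands in $S_n(1\cdots(k+2))$. Your descent-set/lattice-word verification of the middle step is precisely the column-pairing encoding the paper quotes from Proposition \ref{RSKthm} (your $\tilde{Q}$ and $\tilde{P}$ are the paper's $R$ and $S$), re-derived explicitly rather than cited, and the details you flag as needing care do check out.
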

\begin{proof}
Our proof is nearly identical to that of \cite{ouch}.  Recall the argument of Proposition \ref{RSKthm}: we showed that if $w \in \mathcal{L}_{n, k}(1 \cdots (k + 2))$ and $w \overset{\textrm{RSK}}{\longrightarrow} (P, Q)$ then we can encode $Q$, a tableau with $nk$ boxes, as $R$, a tableau with only $n$ boxes, by replacing a set of boxes in columns $1, \ldots, j - 1, j + 1, \ldots, k + 1$ by a single box in column $k + 2 - j$.  We refer to this as the pairing of the column $k + 2 - j$ in $R$ with the columns $[k+1]\smallsetminus \{j\}$ in $Q$.

Suppose that $w^{-1} \in \mathcal{L}_{n, k}(1 \cdots (k + 2))$.  We have by \cite[Theorem 7.13.1]{EC2} that $w^{-1} \overset{\textrm{RSK}}{\longrightarrow} (Q, P)$.  Then it follows that we can perform the same column-pairing procedure to encode $P$ as a tableau $S$ with $n$ boxes.  We have $\sh(P) = \sh(Q)$, so by the nature of the pairing we have that $\sh(S) = \sh(R)$.  Thus $R$ and $S$ are standard Young tableaux of the same shape, with $n$ boxes and at most $k + 1$ columns.  Moreover, this map is a bijection between $\{w \mid w, w^{-1} \in \mathcal{L}_{n, k}(1 \cdots (k + 2)) \}$ and the set of all such pairs of tableaux.  Finally, RSK is a bijection between the set of these pairs of tableaux and $S_n(1\cdots (k + 2))$, so we have our desired bijection.
\end{proof}

It is possible to exploit various properties of RSK to give other results of a similar flavor, of which we give one example.  If we apply RSK to an involution $w \in \mathcal{L}_{n, k}(1 \cdots (k + 2))$, the result is a pair $(P, P)$ of equal tableaux.  In this case the associated tableaux $R$ and $S$ described in the proof of the preceding theorem are also equal.  Thus the image of $w$ under the bijection of Theorem~\ref{thm:doublyLnk} is also an involution, and moreover, every involution in $S_n(1\cdots (k + 2))$ is the image of some involution in $\mathcal{L}_{n, k}(1 \cdots (k + 2))$ under this map.  This argument implies the following result:

\begin{cor}
The number of involutions in $\mathcal{L}_{n, k}(1 \cdots (k + 2))$ is equal to the number of involutions in $S_n(1\cdots (k + 2))$.
\end{cor}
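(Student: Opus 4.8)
The plan is to show that the bijection $\Phi\colon \{w \mid w, w^{-1} \in \mathcal{L}_{n,k}(1\cdots(k+2))\} \to S_n(1\cdots(k+2))$ constructed in the proof of Theorem \ref{thm:doublyLnk} carries involutions to involutions and restricts to a bijection between the two sets of involutions; the equality of counts is then immediate. The single external ingredient I would invoke is the transpose property of RSK, \cite[Theorem 7.13.1]{EC2}: if $w \overset{\textrm{RSK}}{\longrightarrow}(P,Q)$ then $w^{-1}\overset{\textrm{RSK}}{\longrightarrow}(Q,P)$, so that $w$ is an involution if and only if $P = Q$, and likewise a permutation in $S_n$ is an involution exactly when its insertion and recording tableaux coincide.

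For the forward direction, I would first observe that any involution $w \in \mathcal{L}_{n,k}(1\cdots(k+2))$ satisfies $w^{-1} = w \in \mathcal{L}_{n,k}(1\cdots(k+2))$, so it lies in the domain of $\Phi$. Recall from the proof of Theorem \ref{thm:doublyLnk} that $\Phi(w)$ is the permutation $v$ with $v \overset{\textrm{RSK}}{\longrightarrow}(R,S)$, where $R$ is obtained from the recording tableau $Q$ of $w$ and $S$ from the insertion tableau $P$ of $w$ (equivalently, the recording tableau of $w^{-1}$) by the identical column-pairing encoding of Proposition \ref{RSKthm}. The crucial point I would make explicit is that this encoding is a single well-defined function of one $nk$-box tableau: the block of entries $\{(i-1)k+1,\ldots,ik\}$ of such a recording tableau forms a horizontal strip missing exactly one column $j$ of $[k+1]$, and the encoding records $i$ in column $k+2-j$. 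Hence $R$ and $S$ are the same map applied to $Q$ and to $P$ respectively, so $P = Q$ forces $R = S$. Therefore $\Phi(w) = v$ has equal insertion and recording tableaux and is an involution.

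For the converse I would run $\Phi^{-1}$ on an arbitrary involution $v \in S_n(1\cdots(k+2))$: by the transpose property $v \overset{\textrm{RSK}}{\longrightarrow}(R,R)$ for a single standard Young tableau $R$, which has $n$ boxes and, by Theorem \ref{incsub} together with the avoidance of $1\cdots(k+2)$, at most $k+1$ columns --- exactly the shape data admitted by $\Phi^{-1}$. Feeding the pair $(R,R)$ through $\Phi^{-1}$ inverts the column-pairing on both coordinates identically, producing $(P,Q)$ with $P = Q$ and hence a preimage $w$ that is an involution. Since $\Phi$ is already known to be a bijection, these two observations show that its restriction to involutions is a bijection, which proves the corollary.

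The main obstacle, and the only part requiring genuine care, is the verification that the two encodings $Q \mapsto R$ and $P \mapsto S$ are literally the same operation applied to a single tableau, so that the equality $P = Q$ transfers verbatim to $R = S$ (and dually under $\Phi^{-1}$). This amounts to unpacking the column-pairing of Proposition \ref{RSKthm} and checking that it depends only on the block-horizontal-strip structure of its input and on no auxiliary data of $w$; once this symmetry is in hand, everything else is a direct application of the RSK transpose property.
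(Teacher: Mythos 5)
Your proposal is correct and follows essentially the same route as the paper: the paper likewise observes that an involution $w$ satisfies $w \overset{\textrm{RSK}}{\longrightarrow} (P,P)$ by the transpose property, that the column-pairing encoding then yields $R = S$, and that conversely every involution in $S_n(1\cdots(k+2))$ arises from a pair $(R,R)$ and hence from an involution in $\mathcal{L}_{n,k}(1\cdots(k+2))$. The only difference is presentational: you make explicit the check that the encoding is a single function of one tableau (so $P=Q$ transfers to $R=S$), a point the paper treats as immediate.
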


\section{An analogue of alternating permutations of odd length} \label{sec:oddlength}
The results of Sections \ref{k+1sec} and \ref{sec:k+2} concern $\mathcal{L}_{n, k}$, a generalization of the set $A_{2n}$ of alternating permutations of even length.  We now define a set $\mathcal{L}_{n, k; r}$ that is one possible associated analogue of alternating permutations of odd length.  We briefly describe the changes that need to be made to the bijections of Sections \ref{k+1sec} and \ref{sec:k+2} in order to have them apply in this context and the analogous theorems that result.  The proofs are very similar to our preceding work, and we trust that the interested reader can work out the details for herself.

For $k \geq 1$ and $0 \leq r \leq k - 1$, define $\mathcal{L}_{n, k; r}$ to be the set of permutations $w = w_{0, 2} w_{0, 3} \cdots w_{0, r + 1} w_{1,1} \cdots w_{n,k}$ in $S_{nk + r}$ that satisfy the conditions
\begin{description}
\item[L1$'$.] $w_{i,j} < w_{i,j + 1}$ for all $i, j$ such that $1 \leq i \leq n$, $1 \leq j \leq k - 1$ or $i = 0$, $2 \leq j \leq r$, and
\item[L2$'$.] $w_{i,j + 1} > w_{i+1, j}$ for all $i, j$ such that $1 \leq i \leq n - 1$, $1 \leq j \leq k - 1$ or $i = 0$, $1 \leq j \leq r$.
\end{description}
Note in particular that $\mathcal{L}_{n, k; 0} = \mathcal{L}_{n, k}$ and that $\mathcal{L}_{n, 2; 1}$ is the set $A'_{2n + 1}$ of down-up alternating permutations of length $2n + 1$, i.e., those for which $w_1 > w_2 < w_3 > \ldots < w_{2n + 1}$.  As with $\mathcal{L}_{n, k}$, these permutations are the reading words of tableaux of a certain shape, which is illustrated in Figure~\ref{oddshapes}.  

The following results extend our work in the preceding sections to this context:

\begin{figure}
\begin{center}
\input{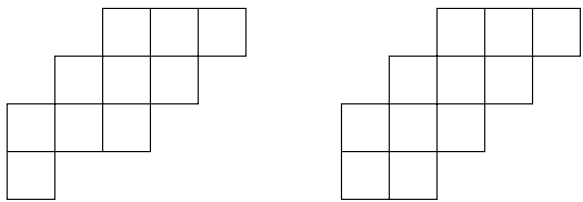}
\end{center}
\caption{The Young diagrams whose standard Young tableaux have reading words $\mathcal{L}_{3, 3; 1}$ and $\mathcal{L}_{3, 3; 2}$.}
\label{oddshapes}
\end{figure}

\begin{prop}
We have a bijection between $\mathcal{L}_{n, k;r}(12\cdots(k + 1))$ and the set of standard Young tableaux of shape $\langle k^n, r\rangle$.
\end{prop}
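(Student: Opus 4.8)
The plan is to adapt the bijection of Proposition \ref{k+1} essentially verbatim, accounting for the single extra partial block. Given $w = w_{0,2}\cdots w_{0,r+1}w_{1,1}\cdots w_{n,k} \in \mathcal{L}_{n,k;r}(12\cdots(k+1))$, I would define a filling $T$ of the shape $\langle k^n, r\rangle$ (which has exactly $nk+r$ cells) by setting $T_{i,j} = w_{n+1-i,j}$ for $1 \le i \le n$, so that the $n$ full blocks of $w$ become the $n$ full rows of $T$ in reverse order, and by placing the short block $0$ along the bottom row via $T_{n+1,j} = w_{0,j+1}$ for $1 \le j \le r$. The only genuinely new feature compared with Proposition \ref{k+1} is this leftward shift by one column, which is forced because block $0$ is indexed by columns $2,\ldots,r+1$ while the bottom row of a straight Young diagram must be left-justified into columns $1,\ldots,r$.

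First I would check that $T$ is a standard Young tableau. Rows increase by L1$'$, exactly as in Proposition \ref{k+1}. For the columns lying among the $n$ full rows the earlier argument applies unchanged: a failure $T_{i,j} > T_{i+1,j}$, i.e. $w_{m,j} < w_{m+1,j}$ with $m = n-i$ and $1 \le m \le n-1$, would produce the increasing subsequence $w_{m,1} < \cdots < w_{m,j} < w_{m+1,j} < \cdots < w_{m+1,k}$ of length $k+1$, contradicting avoidance. The new column relations, between row $n$ and the bottom row $n+1$, read $T_{n,j} = w_{1,j} < w_{0,j+1} = T_{n+1,j}$ for $1 \le j \le r$, and these are precisely the instances $i=0$ of L2$'$ — so after the shift the interface condition falls out for free, with no appeal to pattern avoidance.

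For the inverse I would send a standard Young tableau $T$ of shape $\langle k^n, r\rangle$ to the word $w$ defined by $w_{i,j} = T_{n+1-i,j}$ and $w_{0,j+1} = T_{n+1,j}$, which is manifestly inverse to the forward map. Here L1$'$ is immediate from the row-increase of $T$, and both families of L2$'$ relations follow by combining the column-increase of $T$ with the row-increase exactly as in Equations \ref{cols}--\ref{rows}, the interface case $i=0$ again coming straight from $T_{n,j} < T_{n+1,j}$. Avoidance of $12\cdots(k+1)$ is the cleanest point: since $r \le k-1 < k$, the diagram $\langle k^n, r\rangle$ has exactly $k$ columns, so among any $k+1$ entries of $w$ two must lie in a common column of $T$; that column is strictly increasing downward while the reading order visits its lower (hence larger) cell first, so the two entries occur as a descent in $w$ and cannot both belong to an increasing subsequence.

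I expect no serious obstacle, since the whole argument is the proof of Proposition \ref{k+1} with one short row appended. The single point requiring care is the bookkeeping of the column shift $j \mapsto j+1$ for block $0$, together with the observation that after this shift the boundary condition between block $0$ and block $1$ coincides with the column comparison $T_{n,j} < T_{n+1,j}$; once that alignment is verified, everything else is routine. I would also double-check the degenerate cases, namely $r = 0$ (which recovers Proposition \ref{k+1}) and the columns $r < j \le k$ where the bottom row is absent, to be sure no condition is vacuously mis-stated.
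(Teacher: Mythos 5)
Your proposal is correct and matches the paper's intent exactly: the paper's entire proof of this proposition is the one-line remark that the bijection of Proposition \ref{k+1} ``carries over in the obvious way,'' and your write-up supplies precisely those details. In particular, your two points of care --- the column shift $j \mapsto j+1$ for block $0$, and the fact that the interface inequality $T_{n,j} < T_{n+1,j}$ is exactly the $i = 0$ case of L2$'$ --- are the correct (and only) places where the argument of Proposition \ref{k+1} needs adjustment, and your pigeonhole argument for $12\cdots(k+1)$-avoidance goes through since $\langle k^n, r\rangle$ still has exactly $k$ columns.
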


The bijection of Section \ref{k+1sec} carries over in the obvious way.

\begin{prop}
We have a bijection between $\mathcal{L}_{n, k;r}(12\cdots (k + 2))$ and the set of standard Young tableaux of shape $\langle(k + 1) ^{n - 1}, k, r\rangle$.
\end{prop}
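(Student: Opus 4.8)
The plan is to adapt the modified RSK bijection of Section \ref{subsec:2ndbij}. Given $w \in \mathcal{L}_{n, k; r}(12\cdots(k + 2))$, I would apply Schensted insertion in reading-word order: first insert the initial short run $w_{0, 2}\cdots w_{0, r + 1}$ of length $r$, then each of the $n$ runs $w_{i, 1}\cdots w_{i, k}$ of length $k$ in turn, recording the intermediate shapes $\lambda_0, \lambda_1, \ldots, \lambda_n$ and setting $P := P_n$. As in Section \ref{subsec:2ndbij}, the $12\cdots(k + 2)$-avoidance together with Theorem \ref{incsub} forces $P$ to have at most $k + 1$ columns, and Theorem \ref{LtoR} guarantees that each $\lambda_i / \lambda_{i - 1}$ is a horizontal strip, of size $r$ for the initial run and of size $k$ thereafter. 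Each length-$k$ run therefore occupies exactly $k$ of the $k + 1$ columns, missing precisely one.

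The key new structural point, which I expect to be the main obstacle, is to understand the first two runs together. I would prove a lemma that the combined word $w_{0, 2}\cdots w_{0, r + 1}w_{1, 1}\cdots w_{1, k}$ has no increasing subsequence of length $k + 1$: any such subsequence splits into an increasing block drawn from row $0$ followed by one from row $1$, and the inequalities $w_{0, j + 1} > w_{1, j}$ of \textbf{L2}$'$ pin down the index at which the two blocks can meet, capping the total length at $k$. Consequently $\lambda_1$ has at most $k$ columns; since the run $w_{1, \cdot}$ adds a size-$k$ horizontal strip, it must miss column $k + 1$ exactly, and a short count of column heights shows $\lambda_1 = \langle k, r\rangle$. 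Thus the first two runs always build the bottom two rows $\langle k, r\rangle$ of the target shape, while each of the remaining $n - 1$ runs contributes a box to a recording tableau $R$, placing a box labelled $i$ in column $k + 2 - j$ when run $i$ misses column $j$, exactly as in Section \ref{subsec:2ndbij}. This yields an $R$ with $n - 1$ boxes.

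It then remains to verify, as in the proof of Proposition \ref{RSKthm}, that $R$ is a genuine standard Young tableau: it is automatically increasing down columns, and it is of partition shape because every $\lambda_i$ is. The bookkeeping reduces to the identity $\sh(P)'_i + \sh(R)'_{k + 2 - i} = \nu'_i$ for $\nu = \langle(k + 1)^{n - 1}, k, r\rangle$, where the forced missing column $k+1$ of the first run is precisely what shortens the rightmost column of the target from $n$ to $n - 1$ and what removes the extra box from column $1$ of $R$. In particular $\sh(R)$ rotated by $180^\circ$ and joined to $\sh(P)$ gives exactly $\langle(k + 1)^{n - 1}, k, r\rangle$.

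Finally I would invert the construction just as in Proposition \ref{RSKthm}: from a pair $(P, R)$ rebuild the recording tableau $Q$ by expanding each box of $R$ into its $k$ columns and prepending the determined cells of the base region $\langle k, r\rangle$, then apply RSK$^{-1}$. Theorems \ref{LtoR} and \ref{incsub}, together with the evident $\mathcal{L}_{n, k; r}$-analogue of Note \ref{asdescents}, show that the preimage consists of the prescribed increasing runs (so \textbf{L1}$'$ holds) and avoids $12\cdots(k + 2)$, whence \textbf{L2}$'$ holds as well, so the preimage lies in $\mathcal{L}_{n, k; r}(12\cdots(k + 2))$. Since the set of admissible pairs $(P, R)$ is in obvious bijection with standard Young tableaux of shape $\langle(k + 1)^{n - 1}, k, r\rangle$ by breaking off, rotating, and relabelling the $n - 1$ largest-valued cells, this completes the proof. (The recursive good-tableau bijection of Section \ref{subsec:1stbij} can be adapted in a parallel fashion, with the initial short row handled by the same combined-run lemma.)
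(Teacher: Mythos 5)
Your plan follows the paper's own second proof idea (modifying the RSK bijection of Section \ref{subsec:2ndbij}), and your key new lemma is correct: the first $r + k$ letters of $w$ have no $(k+1)$-term increasing subsequence, which together with Theorems \ref{LtoR} and \ref{incsub} forces the shape after the first two runs to be $\langle k, r\rangle$, so those runs build the base of the target shape and contribute no box to $R$. But your structural claim about $R$ is wrong: the $n - 1$ recorded boxes need \emph{not} form a partition shape, so $R$ as you define it is not a standard Young tableau, and your identity $\sh(P)'_i + \sh(R)'_{k + 2 - i} = \nu'_i$ is inconsistent with $\sh(R)$ being a partition. Concretely, take $k = 2$, $r = 1$, $n = 2$ and $w = 21435 \in \mathcal{L}_{2, 2; 1}(1234)$: inserting $2$ and then the run $1, 4$ gives $P_1$ of shape $\langle 2, 1\rangle$, and the final run $3, 5$ gives $P$ of shape $\langle 3, 2\rangle$, the added strip $\langle 3, 2\rangle / \langle 2, 1\rangle$ occupying columns $2$ and $3$ and missing column $j = 1$. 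Your rule then places the unique box of $R$ in column $k + 2 - j = 3$ with columns $1$ and $2$ empty; equivalently, here $\nu' = (3, 2, 1)$ and $\sh(P)' = (2, 2, 1)$, so your identity forces $\sh(R)' = (0, 0, 1)$, which is not weakly decreasing. The partition-shape argument from Proposition \ref{RSKthm} breaks precisely because the complement of $\sh(P)$ is now taken inside the non-rectangular $\nu$. The paper repairs this by making $R$ a standard \emph{skew} tableau of shape $\mu / \langle k + 1 - r, 1\rangle$, the fixed inner shape $\langle k + 1 - r, 1\rangle$ being the rotated complement of $\nu$ in the rectangle $\langle (k+1)^{n+1}\rangle$; in the example the box sits legally in $\langle 3, 1\rangle / \langle 2, 1\rangle$. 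With straight shapes the encoding loses exactly the information of where the misses occur relative to the full $(k+1)$-wide frame, so your map does not land in your claimed codomain, and your closing reduction (``breaking off, rotating, and relabelling the $n - 1$ largest-valued cells'' of a tableau of shape $\nu$) likewise produces skew, not straight, shapes.

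A second, smaller flaw: the ``evident $\mathcal{L}_{n, k; r}$-analogue of Note \ref{asdescents}'' is false at the top row. A boundary violation $w_{0, j + 1} < w_{1, j}$ yields only the $(k+1)$-term increasing subsequence $w_{0, 2} < \cdots < w_{0, j + 1} < w_{1, j} < \cdots < w_{1, k}$, so L1$'$ plus $1\cdots(k+2)$-avoidance does not imply L2$'$: for $k = 2$, $r = 1$ the word $23514$ has increasing runs $2 \mid 35 \mid 14$ and avoids $1234$, yet $w_{0,2} = 2 < 3 = w_{1,1}$. In your inverse this is recoverable, but not via the Note: you must instead use that your prepended base region of $Q$ forces the intermediate shape after the first two runs to be $\langle k, r\rangle$, which has only $k$ columns, so by Theorem \ref{incsub} applied to the prefix a boundary violation is impossible; alternatively, do as the paper does and read $w_{0,2}\cdots w_{0, r+1}$ and $w_{1,1}\cdots w_{1,k}$ off as the second and first rows of what remains of $P$ after $n - 1$ reverse steps, where column-strictness of $P$ supplies the inequalities $w_{0, j+1} > w_{1, j}$ directly.
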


Observe that this coincides with the result of Theorem \ref{k+2thm} in the case $r = 0$, since every standard Young tableau of shape $\langle(k + 1)^{n - 1},k\rangle$ can be extended uniquely to a standard Young tableau of shape $\langle(k + 1)^n \rangle$ by adding a single box filled with $nk + n$.

\begin{proof}[Proof idea 1.]
We describe how to modify the bijection of Section \ref{subsec:1stbij} so that it works in the context of $\mathcal{L}_{n, k;r}(12\cdots (k + 2))$.  To start we redefine the set of good tableaux of shape $\lambda$ (in agreement with Proposition \ref{tabthm}) as the collection of objects that result when we apply the process illustrated in Figure \ref{TtoG} to standard Young tableaux of shape $\lambda$.  That is, a filling $G$ of the diagram of $\lambda$ with positive integers is said to be a good tableau if there is a standard Young tableau $T$ of shape $\lambda$ such that the $(i, j)$th entry of $G$ is equal to $k$ if and only if the $(i, j)$th entry of $T$ is the $k$th smallest among all those entries of $T$ lying in rows numbered $i$ or larger.  

Having made this redefinition, suppose that we are given a permutation $w \in \mathcal{L}_{n, k;r}(12\cdots (k + 2))$.  To find its associated tableau, we perform the first $n - 1$ steps as in the bijection preceding Proposition \ref{alg1}.  When constructing the $n$th row of the tableau $T(w)$, instead of adding $k + 1$ boxes, add only $k$, omitting the last box.  (Observe that this box would otherwise always hold the value $k + r + 2$, since no permutation in $\mathcal{L}_{1, k; r}$ has an increasing subsequence of length $k + 1$.)  Then fill the $(n + 1)$th row with the values $1, 2, \ldots, r$.  Convert the result to a standard Young tableau as described in the proof of Proposition \ref{tabthm} and illustrated in Figure \ref{GtoT}.

To invert this process, suppose we are given a standard Young tableau $T$ of the appropriate shape.  Convert $T$ to a good tableau (under our new definition) by the process described in the proof of Proposition \ref{tabthm} and illustrated in Figure \ref{TtoG}.  Then proceed as in the bijection preceding Proposition \ref{alg2}, with the following variations: no entries should be omitted from the last two rows of the tableau.  For the $(n - 1)$th row (the last row of size $k + 1$), omit the last entry if it is equal to $2k + r + 2$ and otherwise proceed as usual.  (This is equivalent to saying that for the purposes of the $(n - 1)$th row, we pretend that the $n$th row actually is of length $k + 1$, with last entry equal to $k + r + 2$.)  
\end{proof}

\begin{proof}[Proof idea 2.]
We describe how to modify the bijection of Section \ref{subsec:2ndbij} so that it works in the context of $\mathcal{L}_{n, k;r}(12\cdots (k + 2))$.  First, we note that standard Young tableaux of shape $\langle(k + 1)^{n - 1}, k, r\rangle$ are in bijection with pairs $(P, R)$ of tableaux such that $\sh(P)$ is a partition of $nk + r$, $\sh(R)$ is a skew shape $\mu / \langle k + 1 - r, 1 \rangle$ with $n - 1$ boxes, and $\sh(R)$ can be rotated and joined to $\sh(P)$ to form the partition $\langle(k + 1)^{n - 1}, k, r\rangle$.  Our bijection is between $\mathcal{L}_{n, k;r}(12\cdots (k + 2))$ and pairs of tableaux of this form.

Suppose that we are given a permutation $w \in \mathcal{L}_{n, k;r}(12\cdots (k + 2))$.  To build the associated pair of tableaux, we make the following changes to the algorithm given just before the proof of Proposition \ref{RSKthm}: we again make use of RSK, this time with intermediate tableau $P_{-1}, \ldots, P_n = P$ and $R_1, \ldots, R_n = R$.  We set $P_{-1} = \emptyset$ and let $P_{0}$ be the result of inserting the first $r$ values of $w$ into $P_{-1}$.  For $0 \leq i \leq n - 1$, we build $P_{i + 1}$ from $P_i$ by inserting the next $k$ values of $w$.  The $R_i$ are no longer standard Young tableaux but are instead standard skew Young tableaux, with $R_1 = \langle k + 1 - r, 1 \rangle / \langle k + 1 - r, 1\rangle$.  For $1 \leq i \leq n - 1$, we build $R_{i + 1}$ from $R_i$ by the same column-pairing process as before, always preserving the removed shape $\langle k + 1 - r, 1\rangle$.  Thus for each $i$ we have $R_i = \mu / \langle k + 1 - r, 1\rangle$ for some $\mu \vdash k + 1 - r + i$.  (In particular, the first box is added to $R_1$ to form $R_2$ after $2k + r$ values have been inserted into $P$.)

To invert this process, we proceed as in Proposition \ref{RSKthm} for $n - 1$ steps, until $R$ has been exhausted.  At that point, what remains of $P$ will be of shape $\langle k, r\rangle$.  Then we set $w_{0, 2}w_{0, 3}\cdots w_{0, r+1}$ equal to the second row and $w_{1, 1}w_{1, 2} \cdots w_{1, k}$ equal to the first row of (what remains of) $P$.
\end{proof}

As an immediate corollary we have that the number of down-up alternating permutations of length $2n + 1$ avoiding $1234$ is the same as the number of standard Young tableaux of shape $\langle 3^{n - 1}, 2, 1\rangle$.  Taking reverse-complements of permutations gives a bijection between the set of down-up alternating permutations of length $2n + 1$ avoiding $1234$ and the set $A_{2n + 1}(1234)$ of (up-down) alternating permutations of length $2n + 1$ avoiding $1234$, and applying the hook-length formula yields the following result:

\begin{cor}
We have $\displaystyle |A_{2n + 1}(1234)| = \frac{16 (3n)!}{(n - 1)!(n + 1)!(n + 3)!}$ for all $n \geq 1$.
\end{cor}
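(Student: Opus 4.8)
The plan is to obtain the formula by specializing the preceding Proposition to $k = 2$ and $r = 1$, transporting the count along a reverse-complement bijection, and finishing with a hook-length computation. First I would take the Proposition that gives a bijection between $\mathcal{L}_{n, k; r}(12\cdots(k+2))$ and the standard Young tableaux of shape $\langle(k+1)^{n-1}, k, r\rangle$, and set $k = 2$, $r = 1$. Since $\mathcal{L}_{n, 2; 1} = A'_{2n+1}$ is the set of down-up alternating permutations of length $2n+1$, this specialization yields at once that $|A'_{2n+1}(1234)| = f^{\langle 3^{n-1}, 2, 1\rangle}$.

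Next I would pass from down-up to up-down alternating permutations using the reverse-complement map $w \mapsto w^{rc}$ discussed in Section \ref{defs}, where for a word of odd length $2n+1$ one has $(w^{rc})_i = 2n + 2 - w_{2n+2-i}$. The two facts I need are purely formal: reverse-complement is an involution that interchanges the down-up and up-down alternating conditions in odd length (reversal preserves the alternating type for odd length while complementation flips it), and the pattern $1234$ is fixed by reverse-complement, since its reverse is $4321$ and the complement of $4321$ is again $1234$. Consequently $w$ avoids $1234$ exactly when $w^{rc}$ does, so $w \mapsto w^{rc}$ restricts to a bijection $A'_{2n+1}(1234) \to A_{2n+1}(1234)$ and therefore $|A_{2n+1}(1234)| = f^{\langle 3^{n-1}, 2, 1\rangle}$.

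Finally I would evaluate $f^{\langle 3^{n-1}, 2, 1\rangle}$ by the hook-length formula. The shape $\langle 3^{n-1}, 2, 1\rangle$ has $3n$ boxes and conjugate $\langle n+1, n, n-1\rangle$, so its three columns have heights $n+1$, $n$, $n-1$. Tabulating hooks column by column, the first column contributes the hook lengths $\{5, 6, \ldots, n+3\} \cup \{3, 1\}$, the second contributes $\{3, 4, \ldots, n+1\} \cup \{1\}$, and the third contributes $\{1, 2, \ldots, n-1\}$, so the product of all hook lengths is $\tfrac{(n+3)!\,(n+1)!\,(n-1)!}{16}$. Dividing $(3n)!$ by this product gives exactly $\tfrac{16(3n)!}{(n-1)!(n+1)!(n+3)!}$, which is the asserted value.

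I do not expect a genuine obstacle: both bijective steps are immediate from the preceding Proposition and from the elementary symmetry properties of reverse-complement recorded in Section \ref{defs}, and the only real computation is the hook-length product, which is routine once the hooks are listed. The single point deserving a moment of care is verifying the odd-length reverse-complement correspondence—namely that $w \mapsto w^{rc}$ really carries $A'_{2n+1}$ onto $A_{2n+1}$ (rather than back into $A'_{2n+1}$) and that $1234$ is indeed self-reverse-complementary—so that the target set is the up-down family $A_{2n+1}(1234)$ named in the statement.
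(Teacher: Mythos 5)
Your proposal is correct and follows essentially the same route as the paper: specialize the Proposition at $k=2$, $r=1$ to get $|A'_{2n+1}(1234)| = f^{\langle 3^{n-1},2,1\rangle}$, transfer to $A_{2n+1}(1234)$ via reverse-complement (which swaps down-up and up-down in odd length and fixes the pattern $1234$), and evaluate by the hook-length formula. Your column-by-column hook tabulation, giving the product $\frac{(n+3)!\,(n+1)!\,(n-1)!}{16}$, is accurate and simply makes explicit the computation the paper leaves to the reader.
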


\section{Reading words of Young tableaux of arbitrary skew shapes}\label{skewsec}

So far, we have considered permutations that arise as the reading words of standard skew Young tableaux of particular nice shapes.  In this section, we expand our study to include pattern avoidance in the reading words of standard skew Young tableaux of \emph{any} shape.  As is the case for pattern avoidance in other settings, it is relatively simple to handle the case of small patterns (in our case, patterns of length three or less), but it is quite difficult to prove exact results for larger patterns.

In addition to encompassing pattern avoidance for permutations of length $n$ (via the shape $\langle n, n - 1, \ldots, 2, 1\rangle / \langle n - 1, n - 2, \ldots, 1\rangle$), alternating permutations (via the shape $\langle n + 1, n , \ldots, 3, 2\rangle / \langle n - 1, n - 2, \ldots, 1\rangle$ and three other similar shapes), and more generally $\mathcal{L}_{n, k}$ for any $k$, this type of pattern avoidance also encompasses the enumeration of pattern-avoiding permutations by descent set (when the skew shape is a ribbon) or with certain prescribed descents (when the shape is a vertical strip).  Thus, on one hand the strength of our results is constrained by what is tractable to prove in these circumstances, while on the other hand any result we are able to prove in this context applies quite broadly.

\begin{NB}\label{NB}
We place the following restriction on all Young diagrams in this section: we will only consider diagrams $\lambda / \mu$ such that the inner (northwestern) boundary of $\lambda/\mu$ contains the entire outer (southeastern) boundary of $\mu$.  For example, the shape $\langle 4, 2, 1\rangle / \langle 2, 1\rangle$ meets this condition, while the shape $\langle 5, 2, 2, 1\rangle / \langle 3, 2, 1\rangle$ does not.  
\end{NB}
Note that imposing this restriction does not affect the universe of possible results, since for a shape $\lambda / \mu$ failing this condition we can find a new shape $\lambda' / \mu'$ that passes it and has an identical set of reading words by moving the various disconnected components of $\lambda / \mu$ on the plane.  For example, for $\lambda / \mu = \langle 5, 2, 2, 1\rangle/\langle 3, 2, 1\rangle$ we have $\lambda' / \mu' = \langle 4, 2, 1\rangle / \langle 2, 1\rangle$ -- just slide disconnected sections of the tableau together until they share a corner.  This is illustrated in Figure \ref{slidereduce}.

\begin{figure}
\begin{center}
\input{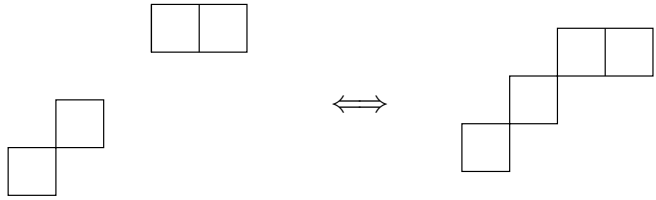}
\end{center}
\caption{Moving separated components gives a new shape but leaves the set of reading words of tableaux unchanged.}
\label{slidereduce}
\end{figure}

\subsection{The patterns $213$ and $132$}\label{213sec}

The equality $|S_n(213)| = |S_n(132)| = C_n$ is a simple recursive result.  In \cite{mans} it was shown that $|A_{2n}(132)| = |A_{2n + 1}(132)| = C_n$ (and so by reverse-complementation also $|A_{2n}(213)| = C_n$), and a bijective proof of this fact with implications for simultaneous avoidance of multiple patterns was given in \cite{self}.  Here we extend this result to the reading words of tableaux of any fixed shape.

\begin{prop}\label{213prop}
The number of tableaux of skew shape $\lambda / \mu$ whose reading words avoid the pattern $213$ is equal to the number of partitions whose Young diagram is contained in that of $\mu$ (subject to Note \ref{NB}).
\end{prop}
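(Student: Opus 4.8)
The plan is to induct on the number of cells, exploiting the recursive structure that $213$-avoidance forces on a reading word. First I would record the elementary structural fact: a word $u$ avoids $213$ if and only if, writing $u = L\,1\,R$ with $1$ the least entry, every entry of $L$ exceeds every entry of $R$ and both $L$ and $R$ avoid $213$. (The minimum can only play the role of the ``$1$'' in a forbidden $213$, and such an occurrence exists precisely when some entry of $L$ is dominated by some entry of $R$, i.e.\ when $L>R$ fails.)

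Next I would transfer this to a standard tableau $T$ of shape $\lambda/\mu$. The entry $1$ sits at an inner corner, i.e.\ at the left end $(r,\mu_r+1)$ of a row $r$ with nothing above it; these are exactly the drop rows of $\mu$ (those with $\mu_{r-1}>\mu_r$, together with $r=1$), and under Note \ref{NB} each such row carries a cell of the skew shape, so the set of admissible $r$ depends only on $\mu$. In the reading word the entries before $1$ are those in rows $r+1,\dots,n$ and the entries after $1$ are the rest of rows $1,\dots,r$; hence the structural fact says that $T$ avoids $213$ exactly when the rows below row $r$ carry the largest values and form a $213$-avoiding tableau on their own, while rows $1,\dots,r$ carry the smallest values and form a $213$-avoiding tableau whose minimum lies in its bottom row. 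Writing $F$ for the number of $213$-avoiding tableaux of a shape and $G$ for the number of those whose minimum lies in the bottom row, this yields $F(\lambda/\mu)=\sum_{r} G(\text{rows }\le r)\,F(\text{rows }> r)$, the sum over drop rows, and deleting the forced minimum identifies $G$ of a shape with $F$ of the shape obtained by adjoining one cell to the bottom of $\mu$.

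I would then show $F(\lambda/\mu)=\#\{\nu\subseteq\mu\}$ by verifying that this quantity satisfies the same recurrence. Here Note \ref{NB} does real work: when I pass to a subshape (rows $>r$, or rows $\le r$ with a cell adjoined to $\mu$) the result may violate the boundary condition, and I must first slide its components to the canonical representative before reading off the relevant inner partition; the remark following Note \ref{NB} guarantees this leaves the set of reading words unchanged, and it is exactly what forces the count to depend only on $\mu$ and not on $\lambda$. The base cases (a single row or a single column, $\mu=\emptyset$, one avoider) are immediate, and unwinding the recurrence builds a partition $\nu\subseteq\mu$ piece by piece, so the argument is genuinely bijective: the drop row chosen at each stage dictates how far the boundary of $\nu$ descends, and the recursive data on the two blocks fills in the remainder.

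The main obstacle, I expect, is the bookkeeping that makes the recurrence match cleanly rather than the structural lemma itself. Two points need care. First, I must check that the $F$-recurrence agrees termwise with the classical lattice-path recurrence for $\#\{\nu\subseteq\mu\}$; this hinges on identifying $G$ correctly and on confirming that the reduction step of Note \ref{NB} sends each adjoined-cell shape to $\mu$ with the appropriate corner filled. Second, I must maintain $\lambda$-independence throughout: although the individual subshapes mention $\lambda$, every surviving count depends only on the reduced inner shape, and I would carry this invariance as a standing hypothesis of the induction instead of re-establishing it at each step.
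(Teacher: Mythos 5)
Your proposal is correct and is essentially the paper's own argument: the paper likewise splits a tableau at the entry $1$ (which must occupy an inner corner $(r,\mu_r+1)$, i.e.\ a drop row of $\mu$) into the block of rows above and the block of rows below, recurses on both, and assembles the partition $\nu\subseteq\mu$ from an $(r-1)\times(\mu_r+1)$ rectangle together with the two recursive outputs --- exactly the bijection you obtain by unwinding your recurrence, with your drop-row choice corresponding to the paper's recovery of $r$ as the largest index with $\nu_{r-1}>\mu_r$. Your explicit $L\,1\,R$ lemma and the reformulation of $G$ as $F$ of the adjoined-cell shape (reduced via Note \ref{NB}) simply make explicit steps the paper performs implicitly, so the two proofs coincide in substance.
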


Note that this is a very natural $\mu$-generalization of the Catalan numbers: the outer boundaries of shapes contained in $\langle n - 1, n - 2, \ldots, 2, 1\rangle$ are essentially Dyck paths of length $2n$ missing their first and last steps.

\begin{proof} 
We begin with a warm-up and demonstrate the claim in the case that $\mu$ is empty.  In this case, the Proposition states that there is a unique standard Young tableau of a given shape $\lambda = \langle \lambda_1, \lambda_2, \ldots\rangle$ whose reading word avoids the pattern $213$.  In order to see this, we note that the reading word of every straight (i.e., non-skew) tableau ends with an increasing run of length $\lambda_1$ and that the first entry of this run is $1$.  Since this permutation is $213$-avoiding, each entry following the $1$ must be smaller than every entry preceding the $1$ and so this run consists of the values from $1$ to $\lambda_1$.  Applying the same argument to the remainder of the tableau (now with the minimal element $\lambda_1 + 1$), we see that the only possible filling is the one we get by filling the first row of the tableau with the smallest possible entries, then the second row with the smallest remaining entries, and so on.  On the other hand, the reading word of the tableau just described is easily seen to be $213$-avoiding, so we have our result in this case.

For the general case we give a recursive bijection.  This bijection is heavily geometric in nature, and we recommend that the reader consult Figures \ref{fig:213.1} and \ref{fig:213.2} to most easily understand what follows.  Suppose we have a tableau $T$ of shape $\lambda / \mu$ with entry $1$ in position $(i, j)$, an inner corner.  

Divide $T$ into two pieces, one consisting of rows $1$ through $i$ with the box $(i, j)$ removed, the other consisting of rows numbered $i + 1, i + 2$, etc.  Let $T_1$ be the tableau order-isomorphic to the first part and let $T_2$ be the tableau order-isomorphic to the second part.  Working recursively, suppose we have defined our map for smaller tableaux: let $\nu = \langle \nu_1, \ldots, \nu_i\rangle$ be the image of $T_1$ and let $\iota = \langle \iota_1, \iota_2, \ldots\rangle$ be the image of $T_2$.  Then the partition $\tau$ associated to $T$ is given by $\tau = \langle \nu_1 + j, \ldots, \nu_i + j, \iota_1, \iota_2, \ldots\rangle$.  Geometrically, $\tau$ consists of all boxes $(k, l)$ with $k < i$ and $l \leq j$ together with the result of applying our process to the right of this rectangle and the result of applying it below the rectangle, shifted up one row.  By construction, $\tau$ is a partition whose Young diagram fits inside $\mu$.

\begin{figure}
\begin{center}
\scalebox{.8}{\input{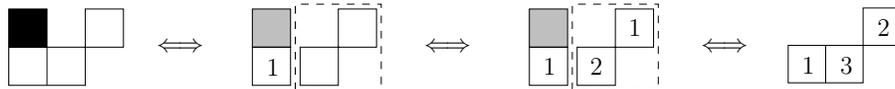}}
\end{center}
\caption{Our bijection applied to the pair $(\langle 3, 2\rangle / \langle 2\rangle, \langle 1\rangle)$ to generate a standard Young tableau.}
\label{fig:213.1}
\end{figure}

\begin{figure}
\begin{center}
\scalebox{.65}{\input{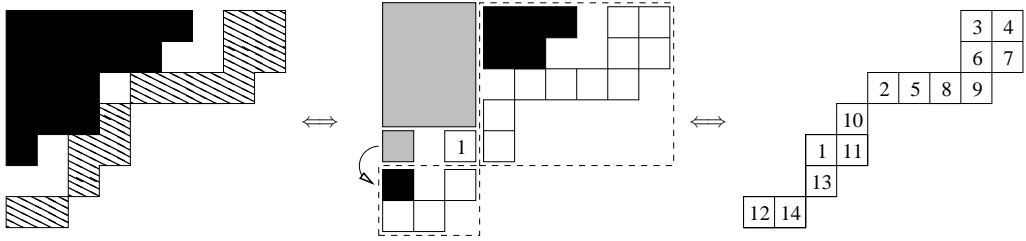}}
\end{center}
\caption{A partial example: the standard Young tableau at right corresponds to the pair $(\langle 9, 9, 8, 4, 4, 3, 2\rangle / \langle 7, 7, 4, 3, 2, 2\rangle, \langle 6, 5, 3, 3, 1\rangle)$ under our bijection.}
\label{fig:213.2}
\end{figure}

To invert this process, start with a pair $(\lambda/\mu, \tau)$ such that $\tau$ fits inside $\mu$.  Let $i$ be the largest index such that $\tau_{i-1} > \mu_i$, or let $i = 1$ if no such index exists.  We divide $\tau$ and $\lambda/\mu$ into two pieces.  To split $\tau$, we delete the boxes that belong to the rectangle of shape $\langle (\mu_i + 1)^{i - 1} \rangle$, leaving a partition of shape $\nu = \langle \tau_1 - \mu_i - 1, \tau_2 - \mu_i - 1, \ldots, \tau_{i - 1} - \mu_i - 1\rangle$ to the right of the rectangle and a partition of shape $\iota = \langle \tau_i, \tau_{i + 1}, \ldots\rangle$ below the rectangle.  To split $\lambda/\mu$, we begin by filling the box $(i, \mu_i + 1)$ with the entry $1$.  Then we take the boxes to the right of this entry as one skew shape $\alpha_1 / \beta_1 = \langle \lambda_1 - \mu_i - 1, \lambda_2 - \mu_i - 1, \ldots, \lambda_i - \mu_i - 1\rangle / \langle \mu_1 - \mu_i - 1, \mu_2 - \mu_i - 1, \ldots, \mu_{i - 1} - \mu_i - 1\rangle$ and the boxes below it as our second skew shape $\alpha_2 / \beta_2 = \langle \lambda_{i + 1}, \lambda_{i + 2}, \ldots\rangle / \langle \mu_{i + 1}, \mu_{i + 2}, \ldots \rangle$. 
 Note that by the choice of $i$, $\nu$ fits inside $\beta_1$ while $\iota$ fits inside $\beta_2$.  Thus, we can apply the construction recursively with the pairs $(\alpha_1 / \beta_1, \nu)$ and $(\alpha_2 / \beta_2, \iota)$, but we fill $\alpha_1 / \beta_1$ with the values $2, \ldots, |\alpha_1/\beta_1| + 1$ and we fill $\alpha_2 / \beta_2$ with the values $|\alpha_1 / \beta_1| + 2, \ldots, |\lambda / \mu| = |\alpha_1 / \beta_1| + |\alpha_2 / \beta_2| + 1$.  (Observe that this coincides with what we did in the first paragraph for $\mu = \emptyset$.)  By induction, this gives us a standard skew Young tableau of shape $\lambda / \mu$.  Note that the reading word of this tableau can be decomposed as the reading word of $\alpha_2 / \beta_2$ followed by the entry $1$ followed by the reading word of $\alpha_1 / \beta_1$.  By the recursive nature of the construction we may assume that the reading words of $\alpha_1 / \beta_1$ and $\alpha_2 / \beta_2$ are $213$-avoiding, and we chose the entries of $\alpha_2 / \beta_2$ to be all larger than the entries of $\alpha_1 / \beta_1$, so it follows that the reading word of the resulting tableau is $213$-avoiding.  Since the two maps we've described are clearly inverse to each other, we have that they are the desired bijections.
\end{proof}

\begin{cor}
We have $|\mathcal{L}_{n, k; r}(213)| = C_n$ for all $n \geq 1$ and $k - 1 \geq r \geq 0$.
\end{cor}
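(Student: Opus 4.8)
The plan is to reduce the corollary directly to Proposition \ref{213prop} by identifying the skew shape whose standard Young tableaux have reading words exactly $\mathcal{L}_{n, k; r}$, and then counting the sub-diagrams of its inner shape. First I would pin down this shape. By the description of $\mathcal{L}_{n, k; r}$ in Section \ref{sec:oddlength} (as reading words, cf.\ Figure \ref{oddshapes}), the associated diagram has $n + 1$ rows: the $n$ blocks $w_1, \ldots, w_n$, each of length $k$, stacked into the usual fat staircase exactly as for $\mathcal{L}_{n, k}$, together with one extra block $w_0$ of length $r$ forming the bottom row. Following the conventions of Section \ref{defs} and the column alignment forced by conditions L1$'$ and L2$'$, the block $w_i$ sits in matrix row $n + 1 - i$ and occupies columns $i, i + 1, \ldots, i + k - 1$ for $1 \leq i \leq n$, while $w_0$ occupies columns $1, \ldots, r$ (its entry $w_{0, j}$ lying in column $j - 1$, so that the absent entry $w_{0, 1}$ would correspond to the off-board column $0$). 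Reading off the inner and outer shapes row by row gives $\mu = \langle n - 1, n - 2, \ldots, 1, 0 \rangle = \langle n - 1, n - 2, \ldots, 1\rangle$ and $\lambda = \langle n + k - 1, n + k - 2, \ldots, k, r\rangle$. The key point, which I would emphasize, is that the inner shape $\mu$ is the staircase $\langle n - 1, \ldots, 1\rangle$ independent of both $k$ and $r$: the extra bottom row only contributes a part $\mu_{n + 1} = 0$ and shortens $\lambda$, but leaves $\mu$ untouched.

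Next I would invoke Proposition \ref{213prop} (after checking the shape meets Note \ref{NB}, or else replacing it by a slid copy that does, as permitted there): the number of standard Young tableaux of shape $\lambda / \mu$ whose reading words avoid $213$ equals the number of partitions whose diagram is contained in $\mu = \langle n - 1, n - 2, \ldots, 1\rangle$. Finally, the number of partitions contained in the staircase $\langle n - 1, n - 2, \ldots, 1\rangle$ is exactly $C_n$ --- this is precisely the Catalan interpretation noted immediately after Proposition \ref{213prop} (outer boundaries of such sub-diagrams are Dyck paths of length $2n$ with their first and last steps deleted). Combining these equalities yields $|\mathcal{L}_{n, k; r}(213)| = C_n$.

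The computational content is genuinely light; the only place demanding care --- and the step I would treat as the main obstacle --- is the bookkeeping in the first paragraph: correctly translating the index conventions of $\mathcal{L}_{n, k; r}$ into box positions, so as to be certain that $\mu$ is the full staircase and does not secretly depend on $r$ (in particular, verifying that the bottom row begins in column $1$ rather than being pushed rightward). I would sanity-check this against the two shapes in Figure \ref{oddshapes}, where for $n = k = 3$ one obtains $\langle 5, 4, 3, 1\rangle / \langle 2, 1\rangle$ when $r = 1$ and $\langle 5, 4, 3, 2\rangle / \langle 2, 1\rangle$ when $r = 2$, both with inner shape $\langle 2, 1\rangle$ and hence $C_3 = 5$ sub-diagrams, matching the claim. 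A secondary point to confirm is the applicability of Note \ref{NB}, which holds because consecutive rows of the fat staircase overlap, so the shape hugs all of $\mu$.
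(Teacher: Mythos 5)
Your proof is correct and is essentially the paper's own argument: the corollary appears there as an immediate consequence of Proposition \ref{213prop}, resting on exactly the shape identification $\lambda/\mu = \langle n+k-1, n+k-2, \ldots, k, r\rangle / \langle n-1, n-2, \ldots, 1\rangle$ and the Catalan count of partitions contained in the staircase $\langle n-1, \ldots, 1\rangle$ that you carry out. Your bookkeeping --- checking that $\mu$ is the full staircase independent of $k$ and $r$ (the extra bottom row of length $r$ starting in column $1$) and that Note \ref{NB} is satisfied --- correctly supplies the details the paper leaves implicit.
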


Note that knowing the number of tableaux of each shape whose reading words avoid $213$ automatically allows us to calculate the number of tableaux of a given shape whose reading words avoid $132$: if $\lambda = \langle \lambda_1, \ldots, \lambda_k\rangle$ and $\mu$ is contained in $\lambda$, the rotated complement operation $T \mapsto T^*$ defined in Section \ref{defs} is a bijection between tableaux of shape $\lambda / \mu$ and tableaux of shape $\langle \lambda_1 - \mu_k, \lambda_1 - \mu_{k - 1}, \ldots, \lambda_1 - \mu_1\rangle / \langle \lambda_1 - \lambda_k, \lambda_1 - \lambda_{k - 1}, \ldots, \lambda_1 - \lambda_2\rangle$.  Moreover, the reading word of $T^*$ is exactly the reversed-complement of the reading word of $T$.  It follows that the reading word of $T$ avoids $132$ if and only if the reading word of $T^*$ avoids $213$.  This argument establishes the following corollary of Proposition \ref{213prop}:

\begin{cor}\label{132prop}
The number of tableaux of skew shape $\lambda /\mu$ whose reading words avoid the pattern $132$ is equal to the number of partitions whose Young diagram is contained in that of the partition $\langle \lambda_1 - \lambda_k, \lambda_1 - \lambda_{k - 1}, \ldots, \lambda_1 - \lambda_2\rangle$ (subject to Note \ref{NB}).
\end{cor}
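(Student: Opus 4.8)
The plan is to reduce the statement to Proposition \ref{213prop} by means of the rotated-complement operation $T \mapsto T^*$ from Section \ref{defs}. Two facts recorded there do all the work: the shape of $T^*$ is the $180^\circ$ rotation $(\lambda/\mu)^*$ of the shape of $T$, and the reading word of $T^*$ is the reverse-complement of the reading word of $T$.

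First I would record the shape of $T^*$ explicitly. Writing $\lambda = \langle \lambda_1, \ldots, \lambda_k\rangle$, rotating the diagram $180^\circ$ turns the inner partition $\mu$ into an outer frame and carries $\lambda/\mu$ to the skew shape $\langle \lambda_1 - \mu_k, \ldots, \lambda_1 - \mu_1\rangle / \langle \lambda_1 - \lambda_k, \ldots, \lambda_1 - \lambda_2\rangle$. In particular its inner partition is $\langle \lambda_1 - \lambda_k, \ldots, \lambda_1 - \lambda_2\rangle$, the partition named in the statement, so the partitions fitting inside it are exactly the objects Proposition \ref{213prop} will count.

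Next I would check the pattern translation. Since a word $w$ contains a pattern $p$ exactly when its reverse-complement contains the reverse-complement of $p$, and a short computation gives that the reverse-complement of $132$ is $213$ (reverse $132$ to $231$, then complement to $213$), the reading word of $T$ avoids $132$ if and only if the reading word of $T^*$ avoids $213$. The operation $T \mapsto T^*$ is an involution, $(T^*)^* = T$, hence in particular a bijection, and by the previous sentence it restricts to a bijection between tableaux of shape $\lambda/\mu$ with $132$-avoiding reading word and tableaux of shape $(\lambda/\mu)^*$ with $213$-avoiding reading word. Applying Proposition \ref{213prop} to the latter shape then counts that set as the number of partitions contained in its inner partition $\langle \lambda_1 - \lambda_k, \ldots, \lambda_1 - \lambda_2\rangle$, which is the desired conclusion.

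The one point needing care — and the spot I expect to be the (minor) obstacle — is the hypothesis of Note \ref{NB}: Proposition \ref{213prop} applies to $(\lambda/\mu)^*$ only when that shape itself satisfies the boundary condition. I would dispose of this by noting that $180^\circ$ rotation interchanges the north-west and south-east boundaries while preserving box adjacency, so the condition of Note \ref{NB} holds for $\lambda/\mu$ if and only if it holds for $(\lambda/\mu)^*$; failing that, the component-sliding remark following Note \ref{NB} lets one replace $(\lambda/\mu)^*$ by an equivalent shape satisfying the condition without changing the set of reading words.
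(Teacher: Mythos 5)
Your proposal is correct and follows essentially the same route as the paper: the paper likewise applies the rotated-complement operation $T \mapsto T^*$, observes that the reading word of $T^*$ is the reverse-complement of that of $T$ (turning $132$-avoidance into $213$-avoidance), and then invokes Proposition \ref{213prop} on the rotated shape, whose inner partition is $\langle \lambda_1 - \lambda_k, \ldots, \lambda_1 - \lambda_2\rangle$. Your additional verification that the hypothesis of Note \ref{NB} is preserved under $180^\circ$ rotation is a point the paper leaves implicit, and it is handled correctly.
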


We can use this result to extract a simple formula for the size of $\mathcal{L}_{n, k; r}(132)$:

\begin{cor}
We have that $|\mathcal{L}_{n, k}(132)| = C_n$ for all $n, k \geq 1$ and that $|\mathcal{L}_{n, k; r}(132)| = C_{n + 1} + (k - r - 1)C_n$ for all $n \geq 1$ and  $k - 1 \geq r > 0$.
\end{cor}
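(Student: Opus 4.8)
The plan is to deduce both equalities directly from Corollary \ref{132prop} by identifying the skew shapes whose reading words realize $\mathcal{L}_{n,k}$ and $\mathcal{L}_{n,k;r}$ and then counting the partitions fitting inside the appropriate complementary shape. Recall from Section \ref{defs} that $\mathcal{L}_{n,k}$ is the set of reading words of standard Young tableaux of shape $\langle n+k-1, n+k-2, \ldots, k\rangle / \langle n-1, n-2, \ldots, 1\rangle$, and that, reading off the shape in Figure \ref{oddshapes} from the conditions L1$'$ and L2$'$ defining $\mathcal{L}_{n,k;r}$, the set $\mathcal{L}_{n,k;r}$ is the set of reading words of standard Young tableaux of shape $\langle n+k-1, n+k-2, \ldots, k, r\rangle / \langle n-1, n-2, \ldots, 1\rangle$, where the extra bottom row of length $r$ accounts for the prefix $w_{0,2}\cdots w_{0,r+1}$ of the reading word.

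For the first statement, I would apply Corollary \ref{132prop} with $\lambda = \langle n+k-1, \ldots, k\rangle$. Since $\lambda_1 - \lambda_j = j - 1$, the relevant partition $\langle \lambda_1 - \lambda_n, \ldots, \lambda_1 - \lambda_2\rangle$ is exactly the staircase $\langle n-1, n-2, \ldots, 1\rangle$; as noted just after Proposition \ref{213prop}, the number of partitions fitting inside this staircase is $C_n$, so $|\mathcal{L}_{n,k}(132)| = C_n$.

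For the second statement I would apply Corollary \ref{132prop} with $\lambda = \langle n+k-1, \ldots, k, r\rangle$, which has $n+1$ rows because $r > 0$. Here $\lambda_1 = n+k-1$, the last row is $\lambda_{n+1} = r$, and $\lambda_1 - \lambda_j = j-1$ for $2 \leq j \leq n$, so the relevant partition is $P = \langle (n+k-1) - r, n-1, n-2, \ldots, 1\rangle$. Writing $s = k-1-r \geq 0$, this is $P = \langle n+s, n-1, n-2, \ldots, 1\rangle$, i.e.\ the staircase $\langle n, n-1, \ldots, 1\rangle$ with its top row lengthened by $s$. I would then count partitions $\rho \subseteq P$ by splitting on the size of $\rho_1$: those with $\rho_1 \leq n$ are precisely the partitions contained in $\langle n, n-1, \ldots, 1\rangle$, of which there are $C_{n+1}$; and for each of the $s$ values $\rho_1 \in \{n+1, \ldots, n+s\}$, the constraint $\rho_1 > n-1$ imposes nothing further on $\rho_2$, so the remaining rows $\rho_2, \rho_3, \ldots$ range over all partitions inside $\langle n-1, n-2, \ldots, 1\rangle$, contributing $s\,C_n$. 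Summing yields $C_{n+1} + s\,C_n = C_{n+1} + (k-r-1)C_n$, as claimed.

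The only content beyond bookkeeping is the split in the last paragraph together with the classical identification of the number of sub-partitions of a staircase $\langle m-1, \ldots, 1\rangle$ with $C_m$; the step I expect to need the most care is correctly reading the skew shape of $\mathcal{L}_{n,k;r}$ off its defining inequalities, so that the $r=0$ value $C_n$ and the $r>0$ value agree in form. As consistency checks I would verify the boundary case $r = k-1$ (where $s = 0$ and the formula collapses to $C_{n+1}$) and the degenerate agreement of the two statements.
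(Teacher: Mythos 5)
Your proposal is correct and takes essentially the same route as the paper: both apply Corollary \ref{132prop} to the skew shapes realizing $\mathcal{L}_{n,k}$ and $\mathcal{L}_{n,k;r}$, identify the complementary partitions $\langle n-1, \ldots, 1\rangle$ and $\langle n+k-r-1, n-1, \ldots, 1\rangle$, and count sub-partitions of the latter by splitting on whether the first row has length at most $n$ (contributing $C_{n+1}$) or lies in $\{n+1, \ldots, n+k-r-1\}$ (contributing $(k-r-1)C_n$). The only cosmetic difference is that you derive the skew shape for $\mathcal{L}_{n,k;r}$ explicitly from L1$'$ and L2$'$, which the paper delegates to Figure \ref{oddshapes}.
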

\begin{proof}
Both halves of this result follow from Corollary \ref{132prop}: in the case of $\mathcal{L}_{n, k}$, we are counting Young diagrams contained in $\langle n - 1, n - 2, \ldots, 2, 1\rangle$ while in the case of $\mathcal{L}_{n, k; r}$ for $r > 0$ we are counting shapes contained in $\langle n + k - r - 1, n - 1, n - 2, \ldots, 1\rangle$.  As we noted before, the former diagrams are naturally in bijection with Dyck paths and so are a Catalan object; for the latter, consider separately two cases.  First, if the shape $\lambda$ has first row of length at most $n$ then it is one of the $C_{n + 1}$ shapes contained in $\langle n, n - 1, \ldots, 1\rangle$.  Second, if $\lambda$ has first row of length longer than $n$ then we may independently choose any length between $n + 1$ and $n + k - r - 1$ for the first row and any shape contained in $\langle n - 1, n - 2, \ldots, 1\rangle$ for the lower rows and so we have $(k - r - 1)C_n$ additional shapes.
\end{proof}

\subsection{The patterns $312$ and $231$}\label{231sec}

If a shape $\lambda / \mu$ contains a square, every tableau of that shape will have as a sub-tableau four entries
\[
\begin{array}{|c|c|}
\hline
a & b \\
\hline
c & d \\
\hline
\end{array}
\]
with $a < b < d$ and $a < c < d$, and the reading word of every such tableau will be of the form $\cdots cd \cdots ab \cdots$.  But then this permutation contains both an instance $dab$ of the pattern $312$ an instance $cda$ of the pattern $231$.  Thus, the number of tableaux of shape $\lambda / \mu$ whose reading words avoid $312$ or $231$ is zero unless $\lambda / \mu$ contains no square, i.e., unless $\lambda / \mu$ is contained in a ribbon.  In this case, choose a tableau $T$ of shape $\lambda / \mu$ with reading word $w$.  Since $\lambda/\mu$ is a ribbon, the reading word of the conjugate tableau $T'$ is exactly the reverse $w^r$ of $w$.  Since $w$ avoids $312$ if and only if $w^r$ avoids $213$, we can apply Proposition \ref{213prop} to deduce the following:
\begin{prop}\label{312prop}
If skew shape $\lambda / \mu$ is contained in a ribbon then the number of tableaux of shape $\lambda / \mu$ whose reading words avoid the pattern $312$ is equal to the number of partitions whose Young diagram is contained in that of $\mu$.  Otherwise, the number of such tableaux is $0$.
\end{prop}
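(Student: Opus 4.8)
The plan is to dispose of the degenerate case first and then reduce the ribbon case to Proposition~\ref{213prop} by conjugation. First I would observe that if $\lambda/\mu$ contains a $2\times 2$ square, then no filling can have a $312$-avoiding reading word: the four entries $a,b,c,d$ of such a square (top-left, top-right, bottom-left, bottom-right) satisfy $a<b<d$ and $a<c<d$ in any standard filling, and since the reading word reads the bottom row of the square before the top row, it has the form $\cdots c\,d\cdots a\,b\cdots$; the subsequence $d\,a\,b$ is then an instance of $312$ because $d>b>a$. Hence the count is $0$ whenever $\lambda/\mu$ is not contained in a ribbon, which gives the ``otherwise'' clause. This leaves the case in which $\lambda/\mu$ contains no $2\times 2$ square, i.e.\ is contained in a ribbon.

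For the ribbon case I would use the conjugation map $T\mapsto T'$ of Section~\ref{defs}, which is a bijection from standard tableaux of shape $\lambda/\mu$ to standard tableaux of shape $(\lambda/\mu)'=\lambda'/\mu'$. The crucial geometric fact, valid precisely because $\lambda/\mu$ is a ribbon, is that the reading word of $T'$ is the reverse $w^r$ of the reading word $w$ of $T$. Granting this, I would combine it with the elementary observation that $w$ avoids $312$ if and only if $w^r$ avoids $213$ (reversal turns each occurrence of $312$ into an occurrence of the reversed pattern $213$). Together these show that $T$ has a $312$-avoiding reading word exactly when $T'$ has a $213$-avoiding reading word, so $T\mapsto T'$ restricts to a bijection between the tableaux of shape $\lambda/\mu$ with $312$-avoiding reading words and the tableaux of shape $\lambda'/\mu'$ with $213$-avoiding reading words. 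Proposition~\ref{213prop} applied to $\lambda'/\mu'$ then counts the latter as the number of partitions contained in $\mu'$.

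It remains to convert this into a statement about $\mu$. Here I would use that conjugation $\nu\mapsto\nu'$ is a containment-preserving involution on partitions, so $\nu\subseteq\mu'$ if and only if $\nu'\subseteq\mu$; thus conjugation is a bijection between partitions contained in $\mu'$ and partitions contained in $\mu$, and the two counts agree. This yields exactly the number of partitions contained in $\mu$, as claimed.

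The main obstacle is the geometric claim that conjugating a ribbon tableau reverses its reading word; the rest is a one-line pattern observation and routine bookkeeping. I would prove this claim either by induction along the ribbon or by checking directly that, since in a ribbon consecutive rows and columns overlap in a single box, the row-reading order (bottom-to-top, left-to-right) of $T'$ visits the entries in exactly the reverse of the order in which the row-reading of $T$ visits the corresponding entries. I would also confirm that $\lambda'/\mu'$ still satisfies the technical hypothesis of Note~\ref{NB}, so that Proposition~\ref{213prop} does apply to it.
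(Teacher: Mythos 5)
Your proposal is correct and takes essentially the same route as the paper's own proof: the paper likewise disposes of shapes containing a $2 \times 2$ square via the $\cdots cd \cdots ab \cdots$ reading-word argument, and in the ribbon case applies the conjugation $T \mapsto T'$, the fact that reversal exchanges $312$-avoidance with $213$-avoidance, and Proposition \ref{213prop}. Your additional bookkeeping (that conjugation of partitions is a containment-preserving bijection taking the count over $\mu'$ to the count over $\mu$, and that $\lambda'/\mu'$ still satisfies Note \ref{NB}) only makes explicit steps the paper leaves implicit.
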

Either using the same argument that gave us Corollary \ref{132prop} and applying it to Proposition \ref{312prop} or using the same argument that gave us Proposition \ref{312prop} but using Corollary \ref{132prop} in place of Proposition \ref{213prop} gives us the following result:
\begin{cor}\label{231prop}
If skew shape $\lambda / \mu$ is contained in a ribbon then the number of tableaux of shape $\lambda / \mu$ whose reading words avoid the pattern $231$ is equal to the number of partitions whose Young diagram is contained in that of the partition $\langle \lambda_1 - \lambda_k, \lambda_1 - \lambda_{k - 1}, \ldots, \lambda_1 - \lambda_2\rangle$.  Otherwise, the number of such tableaux is $0$.
\end{cor}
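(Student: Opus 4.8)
The plan is to obtain the corollary from Proposition \ref{312prop} by exactly the maneuver that produced Corollary \ref{132prop} from Proposition \ref{213prop}, namely by transporting the statement through the rotated-complement operation $T \mapsto T^*$ of Section \ref{defs}.

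First I would record the relevant pattern arithmetic. Reversing $231$ gives $132$ and complementing that gives $312$, so the reverse-complement of $231$ is $312$; since reverse-complementation is an automorphism of the pattern-containment order, a word avoids $231$ if and only if its reverse-complement avoids $312$. Because the reading word of $T^*$ is precisely the reverse-complement of the reading word of $T$, the involution $T \mapsto T^*$ restricts to a bijection between the tableaux of shape $\lambda/\mu$ whose reading words avoid $231$ and the tableaux of the rotated shape $(\lambda/\mu)^*$ whose reading words avoid $312$.

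Next I would track the shapes, exactly as in the derivation of Corollary \ref{132prop}. Writing $\lambda = \langle \lambda_1, \ldots, \lambda_k\rangle$, the rotated shape is $(\lambda/\mu)^* = \langle \lambda_1 - \mu_k, \ldots, \lambda_1 - \mu_1\rangle / \langle \lambda_1 - \lambda_k, \ldots, \lambda_1 - \lambda_2\rangle$. Rotation by $180^\circ$ carries $2\times 2$ squares to $2\times 2$ squares, so $\lambda/\mu$ is contained in a ribbon exactly when $(\lambda/\mu)^*$ is; this matches the dichotomy in the statement, and in particular the count is $0$ on both sides in the non-ribbon case. In the ribbon case, Proposition \ref{312prop} applied to $(\lambda/\mu)^*$ says that the number of $312$-avoiding tableaux of that shape equals the number of partitions contained in its inner shape $\langle \lambda_1 - \lambda_k, \ldots, \lambda_1 - \lambda_2\rangle$. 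Composing with the bijection of the previous paragraph yields precisely the claimed count for $231$-avoidance.

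I do not anticipate a genuine obstacle, since every ingredient is already in place; the only point demanding care is the bookkeeping for the inner boundary of $(\lambda/\mu)^*$ and the observation that the ribbon hypothesis is preserved under rotation, both of which are routine. As the surrounding discussion notes, one could instead rerun the conjugation argument of Proposition \ref{312prop} (valid because the reading word of $T'$ is the reverse $w^r$ of $w$ when $\lambda/\mu$ is a ribbon, and $231$ reverses to $132$) with Corollary \ref{132prop} substituted for Proposition \ref{213prop}; that route also works but requires an extra appeal to conjugation of partitions to rewrite the final count, so the rotated-complement argument above is the cleaner of the two.
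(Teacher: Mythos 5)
Your proposal is correct and takes essentially the same approach as the paper: the paper obtains Corollary \ref{231prop} by exactly the two routes you describe (transporting Proposition \ref{312prop} through the rotated-complement map $T \mapsto T^*$ as in the derivation of Corollary \ref{132prop}, or rerunning the conjugation argument with Corollary \ref{132prop} in place of Proposition \ref{213prop}), and you carry out the first in full with the right pattern arithmetic and shape bookkeeping. Your side remarks --- that the ribbon condition is rotation-invariant and that the conjugation route needs an extra appeal to conjugation of partitions to match the final count --- are accurate.
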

 
In the special cases of $\mathcal{L}_{n, k; r}$ it follows that for $k \geq 3$ and $n \geq 2$ we have $\mathcal{L}_{n, k; r}(231) = \mathcal{L}_{n, k; r}(312) = \emptyset$ while for $1 \leq k \leq 2$ we have that $|\mathcal{L}_{n, k; r}(231)|$ and $|\mathcal{L}_{n, k; r}(312)|$ are Catalan numbers \cite{geehoon, catadd}.

\subsection{The patterns $123$ and $321$}\label{123sec}

For the patterns $123$ and $321$, our results are not as nice as those in the preceding sections.  We show that the enumeration of tableaux of given shapes whose reading words avoid these patterns is equivalent to the enumeration of permutations with certain prescribed ascents and descents avoiding these patterns.  This latter problem can easily be solved in any particular case by recursive methods, though the author knows of no closed formula for the resulting values.  As we mentioned in the introduction to Section \ref{skewsec}, the problem of enumerating pattern-avoiding permutations with certain fixed ascents and descents is the special case of our problem in which we consider tableaux contained in a ribbon.

Note that if a shape $\lambda / \mu$ has any rows of length three or more, the reading word of any tableau of shape $\lambda/\mu$ contains a three-term increasing subsequence.  Consequently, in order to study $123$-avoidance it suffices to consider tableaux with all rows of length one or two.  The following result allows us to reduce even further the set of skew shapes we need to consider in order to completely deal with the case of $123$-avoidance.

\begin{prop} If $\lambda/\mu$ is a skew shape, all of whose rows have length one or two, and $\lambda_i - \mu_i = 2$ then the number of tableaux of shape $\lambda/\mu$ whose reading words avoid $123$ is the same as the number of tableaux of the following shapes whose reading words avoid $123$:
\begin{itemize}
\item $\langle \lambda_1 + 1, \ldots, \lambda_i + 1, \lambda_{i+1}, \lambda_{i + 2}, \ldots\rangle / \langle \mu_1 + 1, \ldots, \mu_i + 1, \mu_{i + 1}, \mu_{i+2}, \ldots\rangle$
\item $\langle \lambda_1 + 1, \ldots, \lambda_{i-1} + 1, \lambda_i, \lambda_{i + 1}, \ldots\rangle / \langle \mu_1 + 1, \ldots, \mu_{i-1} + 1, \mu_i, \mu_{i+1}, \ldots\rangle$
\end{itemize}
\end{prop}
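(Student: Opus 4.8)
The plan is to recognize that each of the two target shapes is obtained from $\lambda/\mu$ by rigidly translating a contiguous block of rows one column to the right: the first bullet shifts rows $1,\ldots,i$, and the second shifts rows $1,\ldots,i-1$. First I would observe that such a horizontal translation does not alter the reading word at all, since the reading word is determined by the row indices (unchanged) and the left-to-right order within each row (unchanged); hence fillings of $\lambda/\mu$ correspond bijectively to fillings of either target shape with \emph{identical} reading words. (If a translation happens to violate Note \ref{NB}, we may reslide as in Figure \ref{slidereduce} without affecting reading words.) So the only thing that can change is which fillings are valid standard Young tableaux, and because all adjacencies internal to a translated block, and all adjacencies strictly below it, are preserved, the validity conditions change at exactly one interface: between rows $i$ and $i+1$ for the first bullet, and between rows $i-1$ and $i$ for the second. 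The whole proposition thus reduces to showing that, when we restrict to reading words avoiding $123$, the original and translated shapes impose \emph{the same} conditions at that single interface.

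The key step is a short ``forced $123$'' argument exploiting the fact that row $i$ has length two. Write its two entries as $x<y$ (left box $x$, right box $y$), which is an ascent appearing consecutively in the reading word. Consider the first bullet in its hardest case, where a $2\times 2$ square sits directly below row $i$, with entries $z<w$ (so $z$ is below $x$ and $w$ below $y$); the reading word then contains the consecutive block $z,w,x,y$. The original shape forces $x<z$ and $y<w$, while the translated shape (row $i$ pushed right) forces only $x<w$. I claim all three conditions cut out the same set once $123$ is forbidden: if $z<x$ then $z,x,y$ is an increasing subsequence, so avoidance forces $x<z$; given $x<z<w$, if $w<y$ then $z,w,y$ is an increasing subsequence, so avoidance forces $y<w$; and $x<w$ is then automatic from $x<z<w$. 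Hence the set of $123$-avoiding fillings is literally unchanged, and the counts agree. The remaining first-bullet cases (row $i+1$ overlapping row $i$ in a single column, where one checks $x<z$ is forced by the triple $z,x,y$; or no overlap, where the move is a pure slide) are strictly easier.

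For the second bullet I would run the mirror-image analysis at the upper interface: with $u<v$ the (length-two, as forced by $\lambda$ being a partition) row $i-1$ lying above, the reading word contains $x,y,\ldots,u,v$, and avoidance of $123$ forces $u<x$ (else $x,u,v$) and $v<y$ (else $x,y,v$), matching the translated condition $u<y$; the lower-overlap and no-overlap cases are again routine. Alternatively, the second bullet follows from the first by applying the rotated-complement involution $T\mapsto T^{*}$ of Section \ref{defs}, which preserves $123$-avoidance (since $123$ is fixed by reverse-complementation) and interchanges ``detaching the left box of a length-two row from below'' with ``detaching it from above.'' I expect the main obstacle to be purely organizational rather than conceptual: one must carefully enumerate, using only that $\lambda$ and $\mu$ are partitions and that all rows have length at most two, exactly which neighboring boxes exist at the affected interface (and thus which of rows $i\pm1$ must have length two), and then confirm in each case that the ``forced $123$'' inequalities establish \emph{equality} of the two sets of valid $123$-avoiding fillings, not merely equality of their cardinalities.
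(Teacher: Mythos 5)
Your proposal is correct and takes essentially the same approach as the paper: both rely on the observation that the rigid row-translation preserves reading words verbatim, so only the column conditions at the single interface between the shifted and unshifted blocks can differ, and both eliminate any discrepancy via forced $123$ patterns built from the two entries of a length-two row at that interface (your triples $z,x,y$ and $z,w,y$ are, up to relabeling, the paper's $U(i+1,\mu_i)\,U(i,\mu_i+1)\,U(i,\mu_i+2)$ and $U(i+1,\mu_i)\,U(i+1,\mu_i+1)\,U(i,\mu_i+2)$). The only divergences are cosmetic: the paper packages the argument as injectivity plus surjectivity of the explicit content-preserving map and dismisses the second bullet as ``extremely similar,'' whereas you argue equality of the two sets of valid $123$-avoiding fillings directly and offer the rotated-complement involution $T\mapsto T^{*}$ (which preserves $123$-avoidance since $123$ is its own reverse-complement) as a clean reduction of the second bullet to the first.
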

In other words, the Proposition states that we can locally slide rows of length two without changing the number of tableaux with $123$-avoiding reading words that result.  Figure \ref{fig:123slide} illustrates these moves.

\begin{figure}
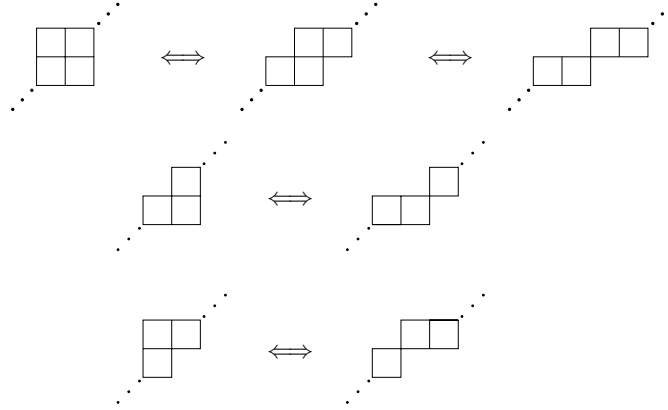

\hspace{3cm}\scalebox{.8}{\input{123slide1.tex}}
\begin{center}
\scalebox{.8}{\input{123slide2.tex}}
\end{center}
\caption{Moves that do not change the set of $123$-avoiding reading words of tableaux.}
\label{fig:123slide}
\end{figure}
\begin{proof}
Given $\lambda$ and $\mu$, let $\alpha = \langle \lambda_1 + 1, \lambda_2 + 1, \ldots, \lambda_i + 1, \lambda_{i+1}, \lambda_{i + 2}, \ldots\rangle$ and $\beta = \langle \mu_1 + 1, \mu_2 + 1, \ldots, \mu_i + 1, \mu_{i + 1}, \mu_{i+2}, \ldots\rangle$ and let $n = |\lambda/\mu| = |\alpha/\beta|$.  There is a natural map between fillings of the shape $\lambda / \mu$ and fillings of the shape $\alpha/\beta$: fill each row of $\alpha/\beta$ with the same numbers in the same order as the corresponding row of $\lambda/\mu$.  We must show that when restricted to standard tableaux whose reading words avoid $123$, this is a bijection.

Given a tableau $T$ of shape $\lambda/\mu$, shifting each of the first $i$ rows one square to the right gives a filling $U$ of $\alpha / \beta$ with $[n]$.  Under this operation, every box is adjacent to the same boxes except for boxes in rows $i$ and $i + 1$.  Thus, the filling $U$ is strictly increasing along rows, and along columns except possibly between rows $i$ and $i + 1$.  By transforming $T$ into $U$, we've moved boxes in row $i$ to the right, so each entry in row $i$ is above a larger entry in $U$ than it was in the tableau $T$, or is above an entry in $T$ but above an empty space in $U$.  It follows that $U$ is also increasing down columns and so is a standard Young tableau.  Thus, the map $T \mapsto U$ sends standard Young tableaux to standard Young tableaux and is injective.  We must show that it is surjective.

Suppose we have a tableau $U$ of shape $\alpha / \beta$ whose reading word avoids $123$.  We show that this tableau is the image under the above-described map of some tableau of shape $\lambda/\mu$, i.e., that shifting the first $i$ rows of $U$ one unit to the left results in a standard Young tableau.  Let $T$ be the filling of $\lambda/\mu$ that results from this shift.  Again, the only possible obstruction is that $T$ might fail to increase along columns between rows $i$ and $i + 1$.  In particular, there are two ways in which this could happen.  Either the rightmost of the two entries in the $i$th row of $U$ (the entry $U(i, \mu_i + 2) = U(i, \lambda_i)$) is larger than the entry $U(i, \mu_i + 1)$ immediately below it and to its left or the leftmost of the two entries in the $i$th row of $U$ (the entry $U(i, \mu_i + 1) = U(i, \lambda_i - 1)$) is larger than the entry of $U(i + 1, \mu_i)$ immediately below it and to its left.  Suppose for sake of contradiction that one of these two situations occurs.

In the former case, since $\alpha/\beta$ is the image of a skew shape $\lambda/\mu$ under a shift to the first $i$ rows, row $i + 1$ must have length two.  Thus, the reading word of $U$ has the form $\cdots U(i + 1, \mu_i)U(i + 1, \mu_i + 1)\cdots U(i, \mu_i + 2) \cdots$ and the subsequence $U(i + 1, \mu_i)U(i + 1, \mu_i + 1)U(i, \mu_i + 2)$ is a $123$ pattern, a contradiction with the choice of $U$.  In the latter case, the reading word of $U$ has the form $\cdots U(i + 1, \mu_i) \cdots U(i, \mu_i + 1) U(i, \mu_i + 2) \cdots$, and the subsequence $U(i + 1, \mu_i) U(i, \mu_i + 1) U(i, \mu_i + 2)$ is a $123$ pattern, again a contradiction.  Thus neither potential obstruction can occur, so $U$ is the image of a standard Young tableau $T$ and the map is surjective, as desired.

The other case is extremely similar, and we omit it here.
\end{proof}

It is an immediate consequence of this proposition that the problem of counting tableaux of a given skew shape whose reading words avoid $123$ reduces to the case of shapes contained in ribbons.  This is equivalent to counting $123$-avoiding permutations with certain prescribed ascents and descents, and this enumeration can be carried out by straightforward recursive methods.

A similar analysis can be applied to the pattern $321$.  Here we shift the first $i$ or $i - 1$ columns down (instead of shifting the first $i$ or $i - 1$ rows to the right).  This results in a modest increase in the care needed to make the argument work.  In particular, shifting columns does not preserve reading words.  Luckily, in the $321$-avoiding case all of our columns have length two, so the reading words change in a relatively controlled way; for example, a $2 \times 2$ square with reading word $3412$ will, after shifting, have reading word $3142$.  The interested reader is invited to work out the details for herself.

\section{Acknowledgments}
We would like to thank Suho Oh, Craig Desjardins, Alejandro Morales and especially Alex Postnikov for helpful discussions leading to the formulation of these results.  We would also like to thank two anonymous referees for a number of helpful suggestions.

\bibliographystyle{plain}
\bibliography{altpatperms}{}

\end{document}